\documentclass[11pt]{article}

\newcommand{\de}{\,\mathrm{d}}
\newcommand{\e}{\operatorname{e}}

\newcommand{\inc}{\mathrm{inc}}

\newcommand{\real}{\mathrm{Re}\,}

\newcommand{\imag}{\mathrm{Im}\,}

\newcommand{\R}{\mathbb{R}}
\newcommand{\C}{\mathbb{C}}
\newcommand{\N}{\mathbb{N}}

\usepackage{multicol}
\usepackage{xcolor}
\definecolor{capa}{RGB}{50, 90, 160}
\definecolor{delftblue}{RGB}{0,61,165}
\usepackage[colorlinks=true,
  linkcolor=delftblue,
  urlcolor=delftblue,
  citecolor=delftblue]{hyperref}

\newcommand{\rank}[1]{\mathrm{rank}\left(#1\right)}

\newcommand{\Mtri}{\mathsf{M}^{\mathrm{tri}}}

\usepackage{enumitem}
\usepackage{multirow}
\usepackage{amsmath}
\usepackage{amsfonts}
\usepackage{amssymb}
\usepackage{graphicx}
\usepackage{caption}
\usepackage{subcaption}
\usepackage{mathrsfs}
\usepackage{mathtools}
\usepackage{upgreek}
\usepackage{amsthm}
\usepackage{booktabs}
\usepackage{authblk}
\usepackage{cite}
\usepackage{url}
\usepackage[normalem]{ulem}

\newtheorem{theorem}{Theorem}[section]
\newtheorem{lemma}[theorem]{Lemma}
\newtheorem{proposition}[theorem]{Proposition}

\newtheorem{remark}[theorem]{Remark}

\title{Fast high-order solvers for the Lippmann--Schwinger equation in piecewise-smooth heterogeneous media}

\author[2]{Thomas G. Anderson \thanks{\href{mailto:thomas.anderson@rice.edu}{thomas.anderson@rice.edu}}}
\author[1]{Juan Burbano-Gallegos\thanks{\href{mailto:j.s.burbano@utwente.nl}{j.s.burbano@utwente.nl}}}
\author[3]{Luiz M. Faria\thanks{\href{mailto:luiz.faria@ensta.fr}{luiz.faria@ensta.fr}}}
\author[1]{Carlos P\'erez-Arancibia\thanks{\href{mailto:c.a.perezarancibia@utwente.nl}{c.a.perezarancibia@utwente.nl}}}
\affil[1]{\small{Department of Applied Mathematics, University of Twente, Enschede, the Netherlands}}
\affil[2]{\small{Department of Computational Applied Mathematics \& Operations Research, Rice University, Houston, TX, USA}}
\affil[3]{\small{POEMS, CNRS, Inria, ENSTA Paris, Institut Polytechnique de Paris, France}}

\date{\today}

\graphicspath{%
  {figures/}%
}

\begin{document}
\maketitle

\begin{abstract}
    This article presents a fast, high-order Nystr\"{o}m solver for the two-dimensional Lippmann--Schwinger equation arising from time-harmonic scattering by penetrable, piecewise-smooth heterogeneous media. Relying on high-order accurate evaluation of the Newtonian potential on unstructured grids that are adapted to interfaces of discontinuity in a given heterogeneous media, the methodology achieves high-order accurate approximation of the solution to the Lippmann--Schwinger equation using existing fast algorithms such as the fast multipole method. As an iterative method the solver exhibits rapid convergence when coupled to a preconditioning strategy that exploits a class of structured-grid solvers---a class of fast solution methods which has achieved considerable success and attention, including quasi-linear time and memory complexity for setup and application as well as nearly-constant iteration counts, but which has long been intrinsically limited in accuracy. The preconditioning strategy itself couples the Nystr\"{o}m discretization---given by high-order quadrature nodes over a (curved) unstructured mesh conforming to the integration domain, a domain coinciding with the support of the spatially varying contrast---to a uniform Cartesian grid underlying the fast preconditioner by use of a pair of transfer operators. The resulting preconditioner yields a solver that inherits the frequency-robust behavior of its Cartesian counterpart without sacrificing the geometric flexibility and high-order accuracy of the unstructured discretization. We prove that invertibility of the proposed preconditioner holds under explicit conditions on the mesh sizes and on the Cartesian preconditioner. Numerical experiments demonstrate that the preconditioned system requires significantly fewer GMRES iterations than its unpreconditioned counterpart, with iteration counts almost independent of both mesh size and wavenumber. The paper concludes with numerical results illustrating the method's robustness for inhomogeneities described by piecewise-smooth refractive indices with jump discontinuities across interfaces.
\end{abstract}

\section{Introduction}
\label{sec:introduction}

Integral equation formulations provide a powerful framework for time-harmonic wave scattering by inhomogeneous media where the wavenumber (resp. the refractive index) differs from the background wavenumber $k$ (resp. the background refractive index). For inhomogeneous penetrable scatterers with compact support, the Lippmann--Schwinger equation yields a volume integral formulation that enforces the Sommerfeld radiation condition implicitly and is naturally suited to fast convolution-based solvers. In practice, however, attaining both high-order accuracy and frequency-robust iterative convergence is difficult when the refractive index is only piecewise smooth, exhibiting jump discontinuities across material interfaces.

The numerical solution of the Lippmann--Schwinger equation has received sustained attention over the past three decades, and the existing literature can be broadly organized around the two challenges just mentioned: (a) the accurate evaluation of the weakly singular volume potential over a general bounded domain and (b) the efficient solution of the resulting dense linear systems. Regarding the first challenge, a categorical distinction that is central to the present work must be drawn between high-order methods that assume a \emph{globally smooth} compactly supported contrast (i.e., the deviation of the refractive index from its uniform background value, and the quantity that enters the volume integral operator), and the comparatively few that retain their accuracy when the contrast is only \emph{piecewise smooth}, exhibiting jump discontinuities across material interfaces. The latter setting plays a relevant role in applications. Refractive-index profiles with sharp material interfaces are the norm in man-made structures, such as the dielectric devices encountered in nanophotonics and (integrated) optics, where the abrupt index changes between distinct materials are precisely what give the device its function. While such structures can often be modeled by piecewise-constant refractive indices, for which boundary integral equations are available, applications such as graded-index optics~\cite{gomezreino2002gradient} and devices carrying fabrication-induced gradients require a piecewise-smooth index that varies continuously within each material subdomain.

Most existing high-order solvers fall in the first category. They heavily exploit the convolutional structure of the free-space Green's function on uniform grids, both for designing high-order quadrature rules or for efficiency so that the volume potential can be applied in $\mathcal{O}(N\log N)$ operations via the fast Fourier transform (FFT), with $N$ denoting the number of grid points discretizing the (rectangular) computational domain. Periodization-based solvers of optimal accuracy order were introduced in~\cite{vainikko2000fast}, including a super-algebraic high-order trigonometric collocation scheme for globally smooth scatterers (see also the monograph~\cite{saranen2001periodic}), while FFT-accelerated high-order solvers in two and three dimensions were developed in~\cite{bruno2004efficient,hyde2005fast} with a rigorous convergence analysis given in~\cite{bruno2005higher}. Related Fourier-based techniques include the bandlimited solver of~\cite{andersson2005fast} as well as the fast free-space convolution schemes of~\cite{beylkin2009fast} and~\cite{vico2016fast}, the latter, as~\cite{vainikko2000fast}, achieving superalgebraic convergence for globally smooth compactly supported contrasts. In the Nystr\"om context, corrected quadrature rules on uniform grids that retain high-order accuracy despite the kernel singularity were constructed in~\cite{duan2009high}. These quadratures underlie several of the solvers mentioned below, as well as the Cartesian-grid discretization used by the preconditioner proposed in this paper. What all of these uniform-grid schemes have in common is that their high-order accuracy rests on the smoothness of the volume potential input density, and hence of the refractive index: when the refractive index jumps across an interface that does not conform to the grid, the integrand loses regularity and the observed accuracy collapses to low order. Adaptivity offers only a partial remedy: the adaptive, high-order quadtree discretization of~\cite{Greengard:16}, for instance, resolves smoothly varying refractive indices to high order through local refinement, but its accuracy
degrades across a genuine discontinuity, which refinement toward the interface cannot restore.

The second category---genuine high-order accuracy for a non-smooth or discontinuous refractive index---is far less explored, and has been achieved only by discretizations that conform to the material interface. A high-order Nystr\"om scheme on decomposed domains that retains accuracy for penetrable media possessing discontinuities across globally smooth interfaces---similar to the problem class of this paper excepting that the present work does not require a globally-smooth boundary---was proposed in~\cite{anand2016efficient} and subsequently extended in~\cite{pandey2019improved} and which relies on augmenting fast, low-order convolution solvers with a specialized boundary integrator to recover high-order convergence across the interface at an $\mathcal{O}(N\log N)$ per-iteration cost. The polynomial-based volume density interpolation method (VDIM)~\cite{anderson2024fast}, which underlies the volume-potential evaluation used here, belongs to this second category as well, providing high-order accuracy on (curved) triangulated domains together with uniformly accurate evaluation up to and across domain boundaries. In contrast to the decomposed-domain schemes of~\cite{anand2016efficient,pandey2019improved}---which rely on overlapping coordinate charts, partitions of unity, a uniform Cartesian grid, and problem-specific changes of variables to analytically resolve the kernel singularity---VDIM operates directly on a standard (possibly adaptive, curved) triangular mesh and regularizes the singularity through Green's third identity, delivering comparable high-order accuracy with greater geometric flexibility and a simpler, essentially kernel-independent implementation (indeed, recently,~\cite{anderson2026general} extended VDIM to a broad class of three-dimensional volume integral operators). As discussed below, however, none of these interface-resolving high-order schemes have been paired with frequency-robust preconditioning that keeps the number of Krylov-subspace (GMRES) iterations required to solve the resulting linear system bounded at high frequency. In fact, they frequently exhibit strong growth in required iteration counts even for fixed frequency as the discretization is refined.

A second, complementary line of research concerns fast \emph{direct} solvers (either for Helmholtz equation discretizations or integral formulations), which exploit low-rank approximations of the off-diagonal blocks of the discretized operator to build a compressed approximate inverse (see~\cite{greengard2009fast} for an early survey and~\cite{martinsson2019fast} for a comprehensive treatment). For the two-dimensional Lippmann--Schwinger equation, the first fast direct algorithm was proposed in~\cite{chen2002fast}, with subsequent high-order developments in~\cite{aguilar2004high} and, more recently, in~\cite{gopal2022accelerated}, whose Hierarchically Block Separable solver constructs an approximate inverse in $\mathcal{O}(N^{3/2})$ operations on top of the tenth-order discretization of~\cite{duan2009high}, and proves particularly effective when a low-accuracy inverse is employed as a GMRES preconditioner. In a similar spirit, albeit based on a differential rather than an integral volumetric formulation, the spectrally accurate direct solver of~\cite{gillman2015spectrally} merges impedance-to-impedance maps over a quadtree of boxes and couples the result to a boundary integral equation; see also the hybrid direct/iterative solver of~\cite{bruno2023direct}. Direct solvers are especially attractive when many incident fields must be processed, as occurs, for instance, in inverse medium scattering via recursive linearization~\cite{borges2017high}, though their factorization cost and memory footprint grow significantly in the high-frequency regime. As already mentioned, all such fast direct solvers sharing the structured-grid assumption, even adaptive ones~\cite{Greengard:16}, revert to low-order accuracy in the presence of arbitrary interfaces in the refractive index.

For large-scale and high-frequency problems, this work is motivated by the observation that, in part because of their limited memory demands, iterative methods remain the standard approach---with fast operator application variously provided by the FFT on uniform grids~\cite{vainikko2000fast,saranen2001periodic} or by the fast multipole method~\cite{greengard1987fast,rokhlin1990rapid} in more general settings. Their main drawback is that Krylov iteration counts for the Lippmann--Schwinger equation grow rapidly with the wavenumber and the contrast, which has motivated a substantial body of work on frequency-robust preconditioners and on convergent iterative reformulations. The convergent Born series of~\cite{osnabrugge2016convergent}, for instance, offers convergence guarantees for arbitrarily large media at the expense of an artificial absorption parameter. The sparsifying preconditioner of~\cite{ying2015sparsifying} transforms the dense discretized system into a nearly sparse one resembling a finite-difference discretization of the Helmholtz equation, whose approximate inverse yields nearly frequency-independent GMRES iteration counts. Building on this construction, the sweeping-type preconditioner of~\cite{zepeda2016fast} applies a domain-decomposition sweep to the sparsified Lippmann--Schwinger system through alternating bidirectional sweeps. The subsequent sparsify-and-sweep preconditioner of~\cite{liu2018sparsify} combines the sparsifying construction directly with a moving-PML sweeping factorization, further reducing the iteration count to a handful in both two and three dimensions. Crucially, these frequency-robust solvers are, without exception, designed for and intrinsically tied to uniform Cartesian discretizations.

The picture that emerges from this literature is a tension, in the piecewise-smooth setting, between high-order accuracy and frequency-robust iterative convergence---that is, GMRES iteration counts that stay bounded as the wavenumber grows. Uniform-grid discretizations admit fast frequency-robust preconditioners, but lose high-order accuracy in the presence of curved material interfaces that do not conform to the grid. Fitted-mesh high-order Nystr\"om discretizations resolve such interfaces naturally, but are not directly compatible with structured-grid preconditioners. This paper introduces a solver that leverages the best aspects of both approaches. To the best of our knowledge, it is the first to combine high-order accuracy for piecewise-smooth, interface-conforming contrasts with the frequency-robust, near-linear-cost iterative solution demanded by high-frequency scattering, thereby providing the first fast, high-order Lippmann--Schwinger solver in two dimensions for high-frequency scattering by piecewise-smooth heterogeneous media.

The proposed method uses a fitted triangular mesh with Vioreanu--Rokhlin (VR) quadrature~\cite{Vioreanu:14} to resolve the support of the contrast and evaluate the volume potential to high-order accuracy via VDIM~\cite{anderson2024fast}. The preconditioner, in turn, acts on a uniform Cartesian grid over a rectangular domain enclosing the contrast support, and transfer operators couple the two discretizations. The Cartesian-to-quadrature map is a bilinear prolongation. For the quadrature-to-Cartesian map, we propose and analyze two constant-preserving constructions---a local $L^2$ projection onto piecewise-linear functions and a renormalized transpose of the Cartesian-to-quadrature prolongation---both of which render the preconditioner invertible under certain conditions. We adopt the renormalized transpose, a construction standard in geometric multigrid~\cite{briggs2000multigrid}, in all scattering experiments, as it yields consistently lower GMRES iteration counts that remain robust as the frequency and contrast increase.

This strategy of preconditioning an unstructured-grid discretization with a structured one is related to the auxiliary space framework~\cite{xu1996auxiliary,grasedyck2016nearly}, though that theory assumes a symmetric positive-definite system with invertible smoothers~\cite[(2.2)]{xu1996auxiliary}. These assumptions do not hold for the Lippmann--Schwinger system, where the direct analogue of a smoother in our preconditioner is itself non-invertible (cf.~\eqref{eq:preconditioner_split}), consistent with the well-known ineffectiveness of multigrid for such problems~\cite{bramble1988analysis}. Closer still is~\cite{huang2024grid}, which uses similar transfer operators for a grid-overlay finite-difference discretization of fractional Laplacian problems on unstructured meshes---though there the resulting preconditioner is always non-invertible in the high-order setting we consider (Section~\ref{sec:preconditioner}).

The main contributions of this work are fourfold. First, a high-order Nystr\"om solver for the two-dimensional Lippmann--Schwinger equation in piecewise-smooth media. Second, a new preconditioning strategy that couples the fitted discretization to a Cartesian one via transfer operators, embedding any existing fast preconditioner for the uniform Cartesian discretization---the sparsifying preconditioner~\cite{ying2015sparsifying} in this work. Third, a theoretical invertibility result for the resulting preconditioner under conditions on the mesh refinement ratio and on the sparsifying preconditioner. Fourth, extensive numerical evidence demonstrating that the preconditioned system exhibits significantly reduced GMRES iteration counts while remaining stable with respect to mesh size and wavenumber.

The remainder of the paper is organized as follows. Section~\ref{sec:problem_setup} introduces the scattering problem and the Lippmann--Schwinger formulation. Section~\ref{sec:Nystrom_discretization} describes the quadrature-node Nystr\"om discretization and the VDIM-based volume potential evaluation. Section~\ref{sec:preconditioner} presents the preconditioner construction, and Section~\ref{sec:invertibility} establishes its invertibility. Section~\ref{sec:numerical_examples} presents numerical results illustrating accuracy and preconditioner performance and we conclude in Section~\ref{sec:conclusion}.

\section{Problem setup}
\label{sec:problem_setup}

In this work we consider the scattering problem arising from the interaction of a time-harmonic incident field $u^{\inc}(x)$, a free-space solution to the Helmholtz equation with frequency $k$ such as a plane-wave, with a penetrable heterogeneous obstacle. The medium is characterized by the \emph{refractive index}
\begin{equation}\label{eq:refractive_index_def}
  n(x) = n_1(x) + i\frac{n_2(x)}{k}, \qquad x \in \R^2,
\end{equation}
where $k > 0$ is the (free-space) wavenumber, $n_1=\real n > 0$, and $n_2=\imag n \geq 0$. We  assume throughout that the refractive index $n:\R^2\to\C$ is piecewise smooth and that the \emph{contrast}, defined as
\begin{equation}\label{eq:contrast_def}
  m(x):= 1 - n(x),\qquad x\in\R^2,
\end{equation}
satisfies $\|m\|_{L^\infty(\mathbb{R}^2)} < \infty$ and is compactly supported, i.e., $n \equiv 1$ (homogeneous background) outside a bounded domain $\Omega$; here $\overline\Omega = \mathrm{supp}\, m$, with $\Omega\subset\R^2$ open and bounded with piecewise smooth (Lipschitz) boundary $\Gamma$. The sought total field $u : \mathbb{R}^2 \to \mathbb{C}$ then satisfies
\begin{subequations}\begin{align}
  \Delta u + k^2 n(x)\, u = 0 &\quad \text{in } \mathbb{R}^2, \label{eq:Helmholtz_eq}\\
  u = u^{\inc} + u^s&, \label{eq:total_field_def}\\
  \lim_{|x| \to \infty} |x|^{1/2}\bigl(\frac{\partial u^s}{\partial |x|} - ik&u^s\bigr) = 0,
  \label{eq:sommerfeld_rc_def}
\end{align}\label{eq:bvp}\end{subequations}
where as usual the Sommerfeld radiation condition~\eqref{eq:sommerfeld_rc_def} is enforced for the scattered field $u^s := u - u^{\inc}$.

Under the aforementioned conditions on the contrast $m$, it follows from Green's representation formula (see, e.g.,~\cite{COLTON:2012,costabel2015spectrum}) that $u$ satisfies the \emph{Lippmann--Schwinger equation}
\begin{equation}\label{eq:Lippmann-Schwinger}
  u(x) + k^2 \int_{\Omega} G(x,y)\,m(y)\,u(y)\,\de y = u^{\inc}(x),
  \quad x \in \mathbb{R}^2,
\end{equation}
where $G(x,y) := \frac{i}{4}H_0^{(1)}(k|x-y|)$, $x\neq y$, is the free-space Green's function
of the Helmholtz equation in $\R^2$. As is well known~\cite{COLTON:2012}, the conditions $n_1 > 0$ and $n_2\geq 0$ render~\eqref{eq:Lippmann-Schwinger} well-posed via the Fredholm alternative, admitting a unique solution $u\in H^1_{\rm loc}(\R^2)$. Interestingly, well-posedness of~\eqref{eq:Lippmann-Schwinger} holds for a considerably broader class of regions $\Omega$, including those with fractal set boundaries~\cite{bannister2026acoustic}.

Since the unknown $u$ enters the integral in~\eqref{eq:Lippmann-Schwinger} only through its values on $\overline{\Omega}=\operatorname{supp}m$, restricting the evaluation point $x$ to $\Omega$ already yields a closed integral equation for $u|_\Omega$,
\begin{equation}\label{eq:Lippmann-Schwinger_Omega}
  u(x) + k^2 \int_{\Omega} G(x,y)\,m(y)\,u(y)\,\de y = u^{\inc}(x), \quad x \in \Omega .
\end{equation}
Equation~\ref{eq:Lippmann-Schwinger_Omega} above is the equation we discretize (Section~\ref{sec:Nystrom_discretization}). Once $u|_\Omega$ has been obtained, the total field at any point $x\in\R^2$ is recovered via the representation formula
\begin{equation}\label{eq:representation_formula}
  u(x) = u^{\inc}(x) - k^2 \int_{\Omega} G(x,y)\,m(y)\,u(y)\,\de y,
  \quad x \in \R^2 .
\end{equation}

A similar equation may be posed on any bounded domain containing $\Omega$. In particular, on a rectangle $D\subset\R^2$ with $\overline{\Omega}\subset D$, it holds that
\begin{equation}\label{eq:Lippmann-Schwinger_small}
  u(x) + k^2 \int_{\Omega} G(x,y)\,m(y)\,u(y)\,\de y = u^{\inc}(x), \quad x \in D ,
\end{equation}
where on $D\setminus\Omega$ equation~\eqref{eq:Lippmann-Schwinger_small} coincides with the representation formula~\eqref{eq:representation_formula}. This enclosing rectangle is the domain over which the structured Cartesian grid underlying the preconditioner of Section~\ref{sec:preconditioner} is defined.

\section{Nystr\"om discretization}\label{sec:Nystrom_discretization}

We discretize~\eqref{eq:Lippmann-Schwinger_Omega} via a Nystr\"{o}m method whose collocation points are the high-order quadrature nodes of a fitted triangular mesh over $\Omega$. Because the unknown enters~\eqref{eq:Lippmann-Schwinger_Omega} only through its values on $\Omega$, these interior nodes alone determine the discrete solution. The field at any other point, in particular at exterior evaluation points, is recovered in a single post-processing step from the representation formula~\eqref{eq:representation_formula}. 

The interior quadrature nodes in $\Omega$ allow for the accurate numerical integration of the volume potential
\begin{equation}\label{eq:vol_pot}
  \mathcal{V}_\Omega[f](x) := \int_{\Omega} G(x,y)f(y)\de y, \quad x \in \R^2,
\end{equation}
which appears in~\eqref{eq:Lippmann-Schwinger_Omega} with density $f = m\cdot u$. Since $m$ is only piecewise smooth on $\Omega$, we decompose $\Omega$ into subdomains on which $m$ is smooth. In more detail, $\Omega$ admits a decomposition into a finite collection of disjoint, open, bounded sets $\{\Omega_\ell\}_{\ell=1}^{N_d}$ with piecewise smooth boundaries $\Gamma_\ell = \partial\Omega_\ell$, such that $m|_{\Omega_\ell}$
is smooth for each $\ell \in \{1,\ldots,N_d\}$, and
$\overline\Omega = \overline{\bigcup_{\ell=1}^{N_d} \Omega_\ell}$.
This decomposition is assumed to be \emph{minimal}, in the sense that $m$ does not extend smoothly across any interface shared by two adjacent subdomains. Equivalently, no such pair can be merged while keeping $m$ smooth on their union.

The volume potential~\eqref{eq:vol_pot} is then decomposed as
\begin{equation}\label{eq:vol_pot_decomp}
  \mathcal{V}_\Omega[f](x) = \sum_{\ell=1}^{N_d} \mathcal{V}_{\Omega_\ell}[f](x),
  \quad x \in \R^2,
\end{equation}
and each term is treated independently via VDIM~\cite{anderson2024fast}. The VDIM requires a triangulation of each subdomain, so for each $\Omega_\ell$ we introduce the conforming, curved triangular mesh $\mathcal{T}^{(\ell)}$ and define the global mesh $\mathcal{T} := \bigcup_{\ell=1}^{N_d} \mathcal{T}^{(\ell)}$. By construction, the global mesh $\mathcal{T}$ conforms to the discontinuities of the contrast in the sense that every interface $\partial\Omega_\ell$ is resolved by mesh edges, so that each triangle lies within a single subdomain and $m$ is smooth on it. Following~\cite{anderson2024fast}, we assume throughout that $\mathcal{T}$ belongs to a family of shape-regular and quasi-uniform meshes parametrized by the mesh size $h:=\max_{\tau \in \mathcal{T}} \max_{x,y\in\tau}|x-y|$, i.e., the maximum diameter of any triangle in $\mathcal{T}$.

In a nutshell, VDIM works as follows. For a given target point $x \in \Omega_\ell$, let $\tau = \tau(x) \in \mathcal{T}$ denote the triangle containing $x$. VDIM constructs a degree-$p$ polynomial interpolant $F^{(\tau)}$ of the density $f$ on~$\tau$, together with a polynomial particular solution $\Phi^{(\tau)}$ satisfying $(\Delta + k^2)\Phi^{(\tau)} = F^{(\tau)}$ in $\R^2$. These two target-point-dependent
objects regularize the singular Green's function kernel via Green's third identity,
recasting the volume potential as
\begin{equation}\label{eq:DIM_decomp}
  \mathcal{V}_{\Omega_\ell}[f](x)
  = \mathcal{V}_{\Omega_\ell}[f - F^{(\tau)}](x) + \mathcal{B}_{\Omega_\ell}[F^{(\tau)}](x),
\end{equation}
where $\mathcal{B}_{\Omega_\ell}[F^{(\tau)}](x)$ collects a boundary integral
correction---consisting of single- and double-layer potentials along
$\partial\Omega_\ell$ with polynomial densities---and a local evaluation of the
polynomial PDE solution $\Phi^{(\tau)}(x)$. Since $f - F^{(\tau)}$ is small
sufficiently close to $x \in \tau$ (indeed, it is a quantity of
$\mathcal{O}(h^{p+1})$ there~\cite{anderson2024fast}), the regularized
integrand in $\mathcal{V}_{\Omega_\ell}[f-F^{(\tau)}]$ is smooth and can be
evaluated accurately by standard high-order quadrature over this part of the triangulation.
Further, the error analysis extends guarantees of high-order accurate quadrature
across the entire triangulation $\mathcal{T}$ using this regularization.
The boundary correction $\mathcal{B}_{\Omega_\ell}[F^{(\tau)}](x)$ is in turn
evaluated by boundary quadrature over $\partial\Omega_\ell$: a standard rule when
$x$ lies far from $\partial\Omega_\ell$, and a specialized near-singular
layer-potential evaluation when $x$ lies close to it. For the latter we use the
general-purpose density interpolation method of~\cite{faria2021general}, which together with VDIM
we use as implemented in the open-source \texttt{Inti.jl}~\cite{Inti} package.

Numerical integration is performed using a VR
quadrature rule~\cite{Vioreanu:14} on each triangle. The rule is determined by its
\emph{interpolation degree} $p\in\N$, the maximum degree interpolant possible with the rule, and contains
$n_q = (p+1)(p+2)/2 = \mathcal{O}(p^2)$ strictly interior nodes per element; on a given element $\tau$ we write $\{y_q^{(\tau)},\omega_q^{(\tau)}\}_{q=1}^{n_q}$ for these nodes and their weights. These nodes serve
dually as a degree-$p$ interpolation scheme and as a quadrature rule that
integrates every polynomial of total degree $\leq d_{\mathrm{ex}}(p)$ exactly, where
the \emph{degree of exactness} $d_{\mathrm{ex}}(p) > p$ is tabulated
in~\cite[Table~5.1]{Vioreanu:14}; in particular, $d_{\mathrm{ex}}(2) = 4$. Collecting all nodes across $\mathcal{T}$, we obtain the global quadrature node set $\mathcal{Q} = \{y_j\}_{j=1}^{N_Q}$ (see Figure~\ref{fig:domains_and_mesh}) with corresponding weights
$\{\omega_j\}_{j=1}^{N_Q}$, related to the per-element weights by
$\omega_j = \omega_q^{(\tau)}$ whenever the global node $y_j$ is the local node $y_q^{(\tau)}$
of element $\tau$. Here $N_Q = n_q N_T$, with $N_T := |\mathcal{T}|$ the total number of triangles.

\begin{figure}[ht]
    \centering
    \includegraphics[width=0.4\linewidth]{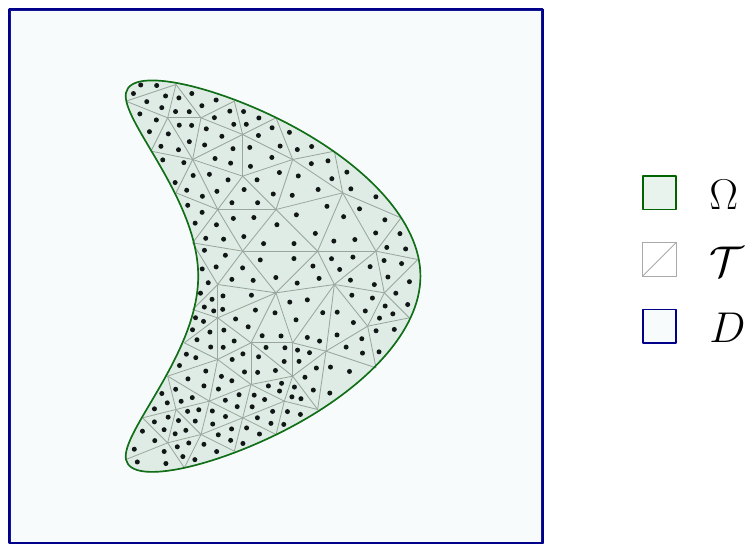}
    \caption{Contrast support domain $\Omega$, embedded in the rectangular domain $D$, together with the fitted triangular mesh $\mathcal T_h$ and its quadrature nodes ($n_q=3$).}
    \label{fig:domains_and_mesh}
\end{figure}

Introducing the vector of density values
$\mathbf{f} \in \mathbb{C}^{N_Q}$, with $(\mathbf{f})_j = f(y_j)$, the
VDIM approximation of the full volume potential defines a linear map
\begin{equation}\label{eq:DIM_map}
  \mathbf{f} \;\longmapsto\; \widetilde{\mathcal{V}}_\Omega[\mathbf{f}](x),
  \qquad x \in \R^2,
\end{equation}
which provides a high-order accurate approximation of $\mathcal{V}_\Omega[f](x)$
everywhere in $\R^2$; for a VR rule of interpolation degree $p$, it generally converges with errors of $\mathcal{O}(h^{p + 3}|\log h|)$ under $h$-refinement. 

The high-order VDIM potential map~\eqref{eq:DIM_map} is applied in a matrix-free fashion using a fast multipole method (FMM) or,
alternatively, through $\mathcal{H}$-matrix compression, so that each application of
$\widetilde{\mathcal{V}}_\Omega$ has quasi-linear rather than quadratic complexity in the
number of unknowns. Even with this fast apply, however, the dense linear system obtained
by collocating the equation requires, in the absence of preconditioning, a number of
GMRES iterations that grows rapidly with the wavenumber and the contrast, making its
iterative solution prohibitive even for moderate problem sizes. This motivates the
use of a structured discretization with an associated fast solver to precondition the VDIM map.

The Nystr\"{o}m method seeks the discrete solution vector $\mathbf{u}\in\mathbb{C}^{N_Q}$ with $(\mathbf{u})_j\approx u(y_j)$ at the quadrature nodes. Collocating~\eqref{eq:Lippmann-Schwinger_Omega} at the $N_Q$ quadrature nodes and replacing the volume potential by its VDIM approximation~\eqref{eq:DIM_map} yields the $N_Q\times N_Q$ linear system
\begin{equation}\label{eq:linear_system}
(\mathbf{u})_j + k^2\,\widetilde{\mathcal{V}}_\Omega[\mathsf{M}\mathbf{u}](y_j)
= u^{\mathrm{inc}}(y_j), \qquad j\in\{ 1, \ldots, N_Q\},
\end{equation}
where $\mathsf{M} \in \mathbb{C}^{N_Q \times N_Q}$ is the diagonal matrix with
$(\mathsf{M})_{jj} = m(y_j)$. In matrix form,
\begin{equation}\label{eq:nystrom_linear_system}
\mathsf{L}\mathbf{u} = \mathbf{b},
\end{equation}
where 
\begin{equation}\label{eq:linear_system_matrix}
\mathsf{L} := \mathsf{I}_{N_Q} + k^2\mathsf{V}\mathsf{M}
\in \mathbb{C}^{N_Q \times N_Q},
\end{equation}
 the potential matrix $\mathsf{V} \in \mathbb{C}^{N_Q \times N_Q}$ has entries
$(\mathsf{V})_{jn} = \widetilde{\mathcal{V}}_\Omega[\mathbf{e}_n](y_j)$ (with $\mathbf{e}_n\in\mathbb{C}^{N_Q}$ the $n$-th canonical basis vector), and $(\mathbf{b})_j = u^{\inc}(y_j)$. Here and in the sequel, $\mathsf{I}_N$ denotes the identity in $\mathbb{C}^{N\times N}$. Once~\eqref{eq:nystrom_linear_system} has been solved, the total field at any point $x\in\R^2$ is recovered from the representation formula
\begin{equation}\label{eq:field_recovery}
  u(x) \approx u^{\inc}(x) - k^2\,\widetilde{\mathcal{V}}_\Omega[\mathsf{M}\mathbf{u}](x),
\end{equation}
a single additional application of the VDIM map.

\section{Preconditioner construction}\label{sec:preconditioner}

In this section we construct the preconditioner used to solve the linear system~\eqref{eq:nystrom_linear_system} by GMRES~\cite{saad1986gmres}. For concreteness and because this solver is publicly available, we build upon the sparsifying preconditioner of Ying~\cite{ying2015sparsifying}, but the method is agnostic to the underlying Cartesian discretization/solver. The idea is to embed the unstructured discretization of Section~\ref{sec:Nystrom_discretization} into a structured Cartesian framework, on which the fast preconditioner acts, through a pair of transfer operators between the quadrature nodes and a uniform Cartesian grid.

The structured grid is a uniform Cartesian grid $\mathcal{G}$ over $D$, with $N_C$ nodes $\{z_j\}_{j=1}^{N_C}$ and step size $h_C$. The step size is chosen so that $k\,\sup_{x\in\Omega}(n_1(x))^{1/2}\,h_C = \mathcal{O}(1)$, i.e., a fixed number of grid points per shortest local wavelength. The domains $\Omega$ and $D$ and their discretizations are illustrated in Figure~\ref{fig:interior_and_pixelation}. Together with the triangulation mesh size $h$ introduced above, the Cartesian step size
determines the \emph{mesh-size ratio}, defined as
\begin{equation}\label{eq:mesh_ratio}
  r_h := \frac{h}{h_C}.
\end{equation} 

We partition the grid nodes by their position relative to $\Omega$: $N_C^{\mathrm{int}}$ of them lie
strictly inside $\Omega$, and the remaining $N_C^{\mathrm{ext}}=N_C-N_C^{\mathrm{int}}$ lie in
$D\setminus\overline{\Omega}$. Ordering the grid so that the interior nodes come first, we write
\begin{equation}\label{eq:interior_cartesian_indices}
  \mathcal{I} := \{1,\ldots,N_C^{\mathrm{int}}\} \subset \{1,\ldots,N_C\}
\end{equation}
for the set of \emph{interior Cartesian indices}, so that $z_j$ lies strictly inside $\Omega$ for
$j\in\mathcal{I}$ and in $D\setminus\overline{\Omega}$ for $j\notin\mathcal{I}$. We write $z_j^{\mathrm{int}}:=z_j$ for $j\in\mathcal{I}$ to emphasize that the point is interior. Finally, the \emph{pixelation} of $\Omega$ is the set of Cartesian grid cells whose closure intersects the quadrature set $\mathcal{Q}$, and $\{z_j^{\mathrm{pix}}\}_{j=1}^{N_C^{\mathrm{pix}}}$
denotes their corner nodes; these subsets are
illustrated in Figure~\ref{fig:interior_and_pixelation}. Since a corner of a boundary cell may lie outside
$\overline{\Omega}$, we have $N_C^{\mathrm{int}} \leq N_C^{\mathrm{pix}}$.
\begin{figure}[ht]
    \centering
    \includegraphics[width=0.3\linewidth]{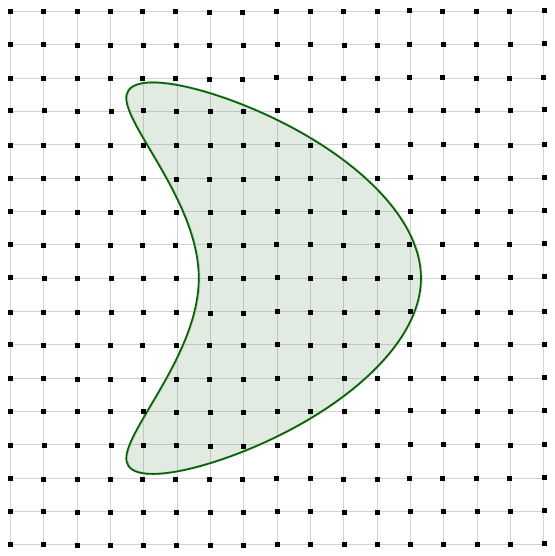}
    \hspace{0.07\linewidth}
    \includegraphics[width=0.3\linewidth]{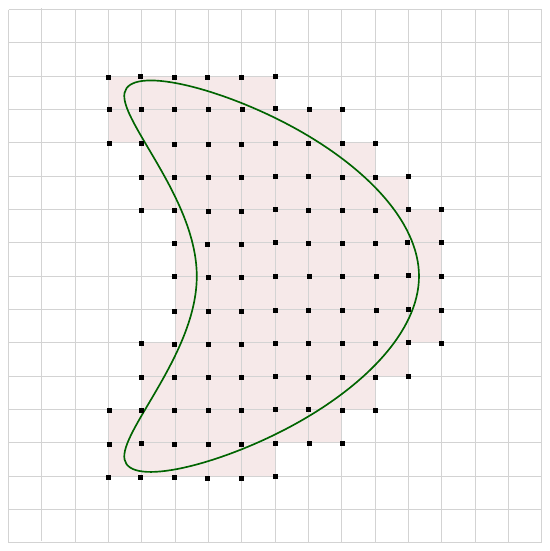}
    \caption{Cartesian grid points over the domain $D$ enclosing $\Omega$ (left), and their pixelation of $\Omega$ (right).}
    \label{fig:interior_and_pixelation}
\end{figure}

In this work we propose the preconditioner
\begin{equation}\label{eq:preconditioner_split}
    \mathsf{P} := \mathsf{T}_C^Q\mathsf{S}^{(C)}\mathsf{T}_Q^C
               + \bigl(\mathsf{I}_{N_Q} - \mathsf{T}_C^Q\mathsf{T}_Q^C\bigr),
\end{equation}
where $\mathsf{S}^{(C)} \in \mathbb{C}^{N_C \times N_C}$ is the sparsifying preconditioner~\cite{ying2015sparsifying} for the Nystr\"{o}m discretization on the uniform Cartesian grid~\cite{duan2009high}. The two rectangular operators transfer fields between the quadrature nodes and the Cartesian grid: $\mathsf{T}_C^Q \in \mathbb{R}^{N_Q \times N_C}$ maps a field from the Cartesian grid to the quadrature nodes, whereas $\mathsf{T}_Q^C \in \mathbb{R}^{N_C \times N_Q}$ maps it back. Here and in the sequel we follow the inter-grid transfer-operator convention standard in multigrid methods~\cite{briggs2000multigrid}, in which the subscript identifies the source point set and the superscript the target.

To expose the preconditioner's structure, note that the first term  in~\eqref{eq:preconditioner_split} is the natural embedding of the Cartesian solver---transfer from the quadrature nodes to the Cartesian grid, apply the sparsifying preconditioner, and transfer back---and the second term is a correction. If the transfer operators were exact mutual inverses, so that the round-trip $\mathsf{T}_C^Q\mathsf{T}_Q^C$ equalled $\mathsf{I}_{N_Q}$, this correction would vanish and $\mathsf{P}$ would reduce to the embedding $\mathsf{T}_C^Q\mathsf{S}^{(C)}\mathsf{T}_Q^C$. As we show below, however, such an identity cannot hold in our high-order setting, as the embedding term is itself singular.

\begin{proposition}\label{prop:non_invertible}
    If $N_Q > N_C^{\mathrm{pix}}$, then the embedding
    $\mathsf{T}_C^Q\mathsf{S}^{(C)}\mathsf{T}_Q^C\in\mathbb{C}^{N_Q\times N_Q}$ is not invertible.
\end{proposition}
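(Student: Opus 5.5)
The plan is a rank argument. The embedding factors as $\mathsf{T}_C^Q\,\mathsf{S}^{(C)}\,\mathsf{T}_Q^C$, so its rank is at most $\rank{\mathsf{T}_C^Q}$. It therefore suffices to show that $\rank{\mathsf{T}_C^Q}\le N_C^{\mathrm{pix}}$. The hypothesis $N_Q>N_C^{\mathrm{pix}}$ then gives a rank strictly below $N_Q$, so the $N_Q\times N_Q$ matrix is singular.

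The key step is to identify the column structure of the prolongation $\mathsf{T}_C^Q$. By construction it is the bilinear interpolation from the Cartesian grid to the quadrature nodes. Row $j$ of $\mathsf{T}_C^Q$ contains the four bilinear weights of the grid cell whose closure contains $y_j$, placed in the columns indexed by that cell's corner nodes; every other entry in the row is zero. Every quadrature node lies in the closure of some cell of the pixelation, which by definition is the set of cells whose closure meets $\mathcal{Q}$. Hence any nonzero entry of $\mathsf{T}_C^Q$ sits in a column indexed by a corner node of a pixelation cell, i.e.\ by one of the $z^{\mathrm{pix}}_j$. All remaining $N_C-N_C^{\mathrm{pix}}$ columns vanish identically.

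Consequently the column space of $\mathsf{T}_C^Q$ is spanned by at most $N_C^{\mathrm{pix}}$ vectors, and
\begin{equation*}
\rank{\mathsf{T}_C^Q\mathsf{S}^{(C)}\mathsf{T}_Q^C}\le \rank{\mathsf{T}_C^Q}\le N_C^{\mathrm{pix}}<N_Q .
\end{equation*}
An equivalent route is to exhibit a nonzero kernel vector. The $N_Q\times N_C^{\mathrm{pix}}$ restriction of $\mathsf{T}_C^Q$ to the pixel columns has a nontrivial left null vector $\mathbf{w}$. This $\mathbf{w}$ annihilates the whole range of the embedding, so the embedding is not surjective.

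The only delicate point is the convention for a quadrature node lying on a cell edge or at a grid corner, where the choice of containing cell is ambiguous. In that case the relevant bilinear weights may be distributed differently, but they are always supported on corners of cells whose closure contains the node. Those cells belong to the pixelation, so the support claim, and with it the whole argument, is unaffected. No property of $\mathsf{S}^{(C)}$ or of $\mathsf{T}_Q^C$ is needed.
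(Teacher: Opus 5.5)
Your proposal is correct and matches the paper's proof: the nonzero columns of $\mathsf{T}_C^Q$ are confined to the $N_C^{\mathrm{pix}}$ pixelation nodes, so $\rank{\mathsf{T}_C^Q\mathsf{S}^{(C)}\mathsf{T}_Q^C}\le\rank{\mathsf{T}_C^Q}\le N_C^{\mathrm{pix}}<N_Q$, with no property of $\mathsf{S}^{(C)}$ or $\mathsf{T}_Q^C$ needed. One small wording slip: your ``equivalent route'' produces a left null vector, which shows non-surjectivity rather than giving a kernel vector, but for a square matrix the two are equivalent.
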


\begin{proof}
    Only the $N_C^{\mathrm{pix}}$ pixelation nodes are vertices of a Cartesian cell containing a quadrature node, so every other column of $\mathsf{T}_C^Q$ vanishes and $\rank{\mathsf{T}_C^Q}\leq N_C^{\mathrm{pix}}$. Hence $\rank{\mathsf{T}_C^Q\mathsf{S}^{(C)}\mathsf{T}_Q^C}\leq N_C^{\mathrm{pix}} < N_Q$, and the operator is singular.
\end{proof}

Empirically, the choice of $\mathsf{T}_C^Q\mathsf{S}^{(C)}\mathsf{T}_Q^C$ does not lead to an effective preconditioner.

\begin{remark}
    The condition $N_Q > N_C^{\mathrm{pix}}$ holds in the typical configuration used in this
    work. Indeed, $N_Q = n_q N_T$ with $n_q=(p+1)(p+2)/2\geq6$ for interpolation degree $p\geq2$,
    while $N_C^{\mathrm{pix}}$ is comparable to the number of Cartesian cells meeting $\Omega$. In fact, 
    for $p\geq2$ each such cell contains several quadrature nodes, so $N_Q > N_C^{\mathrm{pix}}$.
\end{remark}
The correction term in~\eqref{eq:preconditioner_split} is consequently not a small perturbation but an essential ingredient that compensates for this rank deficiency.

We now proceed with a detailed description of the individual components of the proposed preconditioner.

\subsection{Sparsifying preconditioner}\label{subsec:sparsifying_preconditioner}

The Cartesian grid Nystr\"{o}m discretization of the
Lippmann--Schwinger equation takes the form $\mathsf{L}^{(C)}\mathbf{u}^{(C)} =
\mathbf{b}^{(C)}$, where
\begin{equation}\label{eq:LS_eq_on_cartesian_points}
  \mathsf{L}^{(C)} = \mathsf{I}_{N_C} + k^2\mathsf{V}^{(C)}\mathsf{M}^{(C)}
  \in \mathbb{C}^{N_C\times N_C},
\end{equation}
$\mathsf{V}^{(C)}$ is a Cartesian-grid discretization of the volume integral operator with the free-space Green's function~$G$, obtained via a high-order quadrature rule such as that of Duan--Rokhlin~\cite{duan2009high}, and $\mathsf{M}^{(C)}$
is the diagonal matrix with entries $\{m(z_j)\}_{j=1}^{N_C}$. In a nutshell, the sparsifying preconditioner
of~\cite{ying2015sparsifying} constructs a sparse matrix
$\mathsf{A} \in \mathbb{C}^{N_C\times N_C}$ having the same sparsity pattern as a
second-order compact finite-difference stencil for the Laplacian, with entries
chosen to approximately annihilate the off-diagonal blocks of $\mathsf{V}^{(C)}$.
Define the set of immediate Cartesian neighbors (stencils) of the vector index $\mathbf{j}\in\mathbb{N}^2$ as
\begin{equation}\label{eq:cart_neighbors_LS}
    \mathcal{N}(\mathbf{j}) = \bigl\{\mathbf{i}\in\mathbb{N}^2:
    \|\mathbf{j}-\mathbf{i}\|_{\infty}\leq 1,\;
    z_{\,\mathbf{i}}\in D\bigr\},
\end{equation}
where $z_{\mathbf{j}}$ denotes the Cartesian grid point with multi-index $\mathbf{j}=(j_1,j_2)$, $1\leq j_1\leq n_x$, $1\leq j_2\leq n_y$, and $n_x,\,n_y$ are the numbers of grid points in each direction, and let $\mathcal{N}(\mathbf{j})^c := \{\mathbf{i}: z_{\,\mathbf{i}}\in D\}\setminus\mathcal{N}(\mathbf{j})$ denote its complement, that is, the indices of all the Cartesian grid points lying outside the stencil of $\mathbf{j}$. The row $\mathsf{A}(\mathbf{j},\cdot)$ is then required to be supported in $\mathcal{N}(\mathbf{j})$ and to satisfy $\mathsf{A}(\mathbf{j},\mathcal{N}(\mathbf{j}))\,\mathsf{V}^{(C)}\bigl(\mathcal{N}(\mathbf{j}),\mathcal{N}(\mathbf{j})^c\bigr)\approx\mathbf{0}$; such a row exists because $G$ is smooth away from the origin, which renders the $|\mathcal{N}(\mathbf{j})|$ rows of that submatrix numerically rank deficient. Exploiting the translation invariance of $G$, it suffices to consider only the case $\mathbf{j}=\mathbf{0}$. In this case, the stencil weights are obtained from the problem
\begin{equation}\label{eq:sparsifying_min}
  \min_{\mathbf{v}\,:\,\|\mathbf{v}\|=1}
  \bigl\|\mathbf{v} \cdot \mathsf{V}^{(C)}\!\bigl(\mathcal{N}(\mathbf{0}),\mathcal{N}(\mathbf{0})^c\bigr)\bigr\|,
\end{equation}
which is solved via a singular value decomposition: $\mathbf v=\mathsf U(:,|\mathcal{N}(\mathbf{0})|)^*$, the left singular vector associated with the smallest singular value, where
$\mathsf{V}^{(C)}\!\bigl(\mathcal{N}(\mathbf{0}),\mathcal{N}(\mathbf{0})^c\bigr)=\mathsf U\Sigma \mathsf V^*$. Finally, $\mathsf{A}$ is built by setting $\mathsf{A}(\mathbf{j},\mathcal{N}(\mathbf{j}))=\mathbf{v}$ and the small entries of the product $\mathsf{A}\mathsf{L}^{(C)}$ can be filtered to produce the sparse matrix
$\mathsf{C}^{(C)} \approx \mathsf{A}\mathsf{L}^{(C)}$, with a sparsity pattern
resembling that of a finite-difference Helmholtz discretization.  The sparsifying
preconditioner is then defined as
\begin{equation}\label{eq:LC_def}
  \mathsf{S}^{(C)} := \bigl(\mathsf{C}^{(C)}\bigr)^{-1}\mathsf{A},
\end{equation}
so that $\mathsf{S}^{(C)}\mathsf{L}^{(C)} \approx \mathsf{I}_{N_C}$, i.e.,
$\mathsf{S}^{(C)}$ is an approximate inverse of $\mathsf{L}^{(C)}$ on the Cartesian
grid.  It was demonstrated in~\cite{ying2015sparsifying}, empirically, that the number of GMRES iterations
for the system preconditioned by $\mathsf{S}^{(C)}$ is nearly independent of the frequency.

Regarding cost, both $\mathsf{A}$ and $\mathsf{C}^{(C)}$ inherit the compact finite-difference sparsity pattern, so each application of $\mathsf{S}^{(C)}=(\mathsf{C}^{(C)})^{-1}\mathsf{A}$ amounts to a sparse matrix--vector product with $\mathsf{A}$, at $\mathcal{O}(N_C)$ cost, followed by the solution of the sparse system $\mathsf{C}^{(C)}$. In two dimensions the latter is carried out with a nested-dissection sparse direct solver, whose factorization is precomputed once at $\mathcal{O}(N_C^{3/2})$ cost and then reused at a cost of $\mathcal{O}(N_C\log N_C)$ per GMRES iteration. These factorization costs can be reduced to $\mathcal{O}(N_C)$ and the application costs to $\mathcal{O}(N_C)$ following~\cite{liu2018sparsify}.

\begin{remark}[Action of $\mathsf{S}^{(C)}$ outside the scatterer]\label{rmk:exterior_columns}
A structural feature of the sparsifying preconditioner, used repeatedly below, is that it
(approximately) ignores the values of its argument at the exterior Cartesian nodes. Indeed, since $\mathsf{C}^{(C)} \approx \mathsf{A} + k^2(\mathsf{A}\mathsf{V}^{(C)})\mathsf{M}^{(C)}$, we have
\begin{equation}\label{eq:S_minus_I}
  \mathsf{S}^{(C)} - \mathsf{I}_{N_C}
  = (\mathsf{C}^{(C)})^{-1}\bigl(\mathsf{A} - \mathsf{C}^{(C)}\bigr)
  \approx -k^2\,(\mathsf{C}^{(C)})^{-1}(\mathsf{A}\mathsf{V}^{(C)})\,\mathsf{M}^{(C)}.
\end{equation}
Therefore, since $\mathsf{M}^{(C)}=\operatorname{diag}(m(z_j))$ has zero columns at the exterior nodes, so does $\mathsf{S}^{(C)}-\mathsf{I}_{N_C}$. Equivalently, $\mathsf{S}^{(C)}\mathbf{v}$ depends on $\mathbf{v}$ only through its values at the Cartesian nodes inside the scatterer $\Omega$, and $\mathsf{S}^{(C)}$ acts as the identity on any $\mathbf{v}$ vanishing at those nodes. (The solution operator of the continuous Lippmann--Schwinger equation has the same property, by the resolvent identity~\eqref{eq:S_resolvent_identity}.)
Throughout the paper we assume that this structure holds exactly, that is, that $\mathsf{S}^{(C)}-\mathsf{I}_{N_C}$ has identically zero columns at the exterior Cartesian
nodes; in practice we see values of a small factor times machine epsilon.

This property matters for the preconditioner. Collecting the two terms
of~\eqref{eq:preconditioner_split} gives the equivalent factored form
\begin{equation}\label{eq:preconditioner_factored}
    \mathsf{P} = \mathsf{I}_{N_Q} + \mathsf{T}_C^Q(\mathsf{S}^{(C)} - \mathsf{I}_{N_C})\mathsf{T}_Q^C,
\end{equation}
which is the form used throughout the analysis below. Since the quadrature nodes all lie in $\Omega$ while the Cartesian grid covers $D$, the operator $\mathsf{T}_Q^C$ must produce values at Cartesian nodes far from the scatterer---values that its input, supported entirely in $\Omega$, leaves undetermined. By~\eqref{eq:S_minus_I}, the rows of $\mathsf{T}_Q^C$ at those nodes are annihilated in the product $(\mathsf{S}^{(C)}-\mathsf{I}_{N_C})\mathsf{T}_Q^C$, through which $\mathsf{T}_Q^C$ enters $\mathsf{P}$. The extension outside the scatterer therefore has no effect on $\mathsf{P}$, and each construction of Section~\ref{subsec:T_Q^C} may fix it by whatever convention is most convenient.
\end{remark}

\subsection{Cartesian-to-quadrature transfer operator}\label{subsec:T_C^Q}

The operator $\mathsf{T}_C^Q \in \mathbb{R}^{N_Q \times N_C}$ maps field values at the
Cartesian points $\{z_j\}_{j=1}^{N_C}$ to the quadrature nodes $\{y_q\}_{q=1}^{N_Q}$ by bilinear interpolation.
For each Cartesian grid point $z_j=(z_{j,1},z_{j,2})$, $j\in\{1,\ldots,N_C\}$, let
$\psi_j:\mathbb{R}^2\to\mathbb{R}$ denote the associated $\mathcal{Q}_1$ (bilinear) Lagrange
nodal basis function. On each Cartesian cell $\sigma\in\mathcal{G}$ having $z_j$ as a
vertex,
\begin{equation}\label{eq:psi_def}
    \psi_j(x_1,x_2)
    := \left(1-\frac{\alpha}{h_C}\right)\left(1-\frac{\beta}{h_C}\right),
    \qquad \alpha:=|x_1-z_{j,1}|,\quad \beta:=|x_2-z_{j,2}|,
\end{equation}
for $(x_1,x_2)\in\sigma$. Outside the union of such cells $\psi_j$ is extended by zero, and it
satisfies $\psi_j(z_k)=\delta_{jk}$ for all $j,k\in\{1,\ldots,N_C\}$.
Given a quadrature node $y_q\in\Omega$, let $\sigma$ be the Cartesian cell containing it. The four vertices of $\sigma$ are grid points, so row $q$ of $\mathsf{T}_C^Q$ is supported on their four indices, with entries the bilinear weights $(\mathsf{T}_C^Q)_{qj}=\psi_j(y_q)$ (all four are nonzero unless $y_q$ lies on a grid line, in which case the vertices off that line receive zero weight).

Since every row of $\mathsf{T}_C^Q$ sums to one, the operator reproduces constant
fields, $\mathsf{T}_C^Q(c\mathbf{1}_{N_C}) = c\mathbf{1}_{N_Q}$ for any
$c\in\mathbb{C}$, where $\mathbf{1}_N\in\mathbb{R}^N$ denotes the vector of all ones. 
Clearly, $\mathsf{T}_C^Q$ is sparse---each row has at most four nonzero entries---and its only nonzero columns are those of the $N_C^{\mathrm{pix}}$ pixelation nodes, so it holds $\mathcal{O}(N_Q)$ nonzeros and each application costs $\mathcal{O}(N_Q)$ operations.

\subsection{Quadrature-to-Cartesian transfer operator}\label{subsec:T_Q^C}

The operator $\mathsf{T}_Q^C \in \mathbb{R}^{N_C \times N_Q}$ maps field values at the quadrature nodes back to the Cartesian grid. Its rows indexed by $\mathcal{I}$ are required to preserve constant fields, i.e.,
\begin{equation}\label{eq:roundtrip_constants_int}
  \bigl(\mathsf{T}_Q^C(\mathbf{1}_{N_Q})\bigr)_j = 1
  \quad\text{for every }j\in\mathcal{I},
\end{equation}
which ensures that the round trip
\begin{equation}\label{eq:round_trip_def}
  \mathsf{R}^{(C)} := \mathsf{T}_Q^C\mathsf{T}_C^Q \in \mathbb{R}^{N_C\times N_C}
\end{equation}
also preserves constants on $\mathcal{I}$.

Condition~\eqref{eq:roundtrip_constants_int} is the minimal accuracy constraint one can impose on a transfer operator. Reproduction of constants is the zeroth-order case of polynomial reproduction, and it is the standard requirement for inter-grid transfers in multigrid~\cite{briggs2000multigrid}.

In what follows, we present two constructions of $\mathsf{T}_Q^C$. Other choices are certainly possible, but the two considered here are arguably the most natural. They employ different field-mapping mechanisms, distinguished by the superscripts $\mathrm{I}$ and $\mathrm{II}$ in the notation of the resulting operators.

\subsubsection{Approach~I: Local projection}\label{ssubsec:local_proj}
This construction maps the density values at the quadrature nodes to the interior
Cartesian nodes by first projecting, element by element and in the $L^2$ sense, onto
piecewise-linear functions, and then evaluating the projection at the Cartesian
nodes. To express this operation in matrix form, let $\{\phi_i^{(\tau)}\}_{i=1}^3$ denote the
local $\mathcal{P}_1$ Lagrange basis on the triangle $\tau\in\mathcal{T}$, defined by
$\phi_i^{(\tau)}(v_j^{(\tau)})=\delta_{ij}$ at the vertices $\{v_j^{(\tau)}\}_{j=1}^3$
of $\tau$ and extended by zero outside $\overline{\tau}$ (a discontinuous Galerkin
basis), and index the corresponding $3N_T$ degrees of freedom by the pairs
$(\tau,i)$, $\tau\in\mathcal{T}$, $i\in\{1,2,3\}$, where $N_T = |\mathcal{T}|$.
The interior block
$\widetilde{\mathsf{T}}_Q^{C,\mathrm{I}} \in \mathbb{R}^{N_C^{\mathrm{int}} \times N_Q}$
is then defined by the composition
\begin{equation}\label{eq:T_HC_proj_def}
  \widetilde{\mathsf{T}}_Q^{C,\mathrm{I}} :=
  \mathsf{E}_C\,(\Mtri)^{-1}\mathsf{E}_Q^{\top}\,\mathsf{W},
\end{equation}
where:
\begin{itemize}
  \item $\mathsf{W} = \operatorname{diag}(\omega_q)_{q=1}^{N_Q} \in
        \mathbb{R}^{N_Q \times N_Q}$ is the diagonal matrix of quadrature weights;
  \item $\mathsf{E}_Q \in \mathbb{R}^{N_Q \times 3N_T}$ is the evaluation matrix of the
        basis $\{\phi_i^{(\tau)}\}_{i=1}^3$ at the quadrature nodes $y_q$, with entries
        $(\mathsf{E}_Q)_{q,(\tau,i)} = \phi_i^{(\tau)}(y_q)$ if $y_q\in\tau$ and zero
        otherwise;
  \item $\Mtri = \operatorname{diag}\bigl(\mathsf{M}^{(\tau)}\bigr)_{\tau\in\mathcal{T}}
        \in \mathbb{R}^{3N_T \times 3N_T}$ is the block-diagonal mass matrix, whose local
        blocks satisfy  $(\mathsf{M}^{(\tau)})_{ij} \approx
        \int_\tau \phi_i^{(\tau)}\phi_j^{(\tau)}\,\de y$; and
  \item $\mathsf{E}_C \in \mathbb{R}^{N_C^{\mathrm{int}} \times 3N_T}$ is the evaluation
        matrix of the basis $\{\phi_i^{(\tau)}\}_{i=1}^3$ at the interior Cartesian nodes $z_j^{\mathrm{int}}$, with entries
        $(\mathsf{E}_C)_{j,(\tau,i)} = \phi_i^{(\tau)}(z_j^{\mathrm{int}})$ if
        $z_j^{\mathrm{int}}\in\tau$ and zero otherwise, with an arbitrary but fixed choice of element when $z_j^{\mathrm{int}}$ lies on an inter-element edge.
\end{itemize}

The approximation sign in the definition of $\Mtri$ accounts for the fact that the
local blocks are evaluated treating each element as a straight-edged
triangle. In that case
\begin{equation}\label{eq:mass_closed_form}
  \bigl(\mathsf{M}^{(\tau)}\bigr)_{ij}
  = \begin{cases}
      \dfrac{|\tau|}{6},  & i = j,\\[2mm]
      \dfrac{|\tau|}{12}, & i \neq j,
    \end{cases}
  \qquad\text{so that}\qquad
  \bigl(\mathsf{M}^{(\tau)}\bigr)^{-1}
  = \frac{3}{|\tau|}
    \begin{bmatrix} 3 & -1 & -1\\ -1 & 3 & -1\\ -1 & -1 & 3 \end{bmatrix}.
\end{equation}

The composition~\eqref{eq:T_HC_proj_def} acts as follows: for the vector
$\mathbf{f}\in\mathbb{C}^{N_Q}$ of density values $(\mathbf{f})_q=f(y_q)$ (cf.\ Section~\ref{sec:Nystrom_discretization}), the entries
$(\mathsf{E}_Q^{\top}\mathsf{W}\mathbf{f})_{(\tau,i)} =
\sum_{q\,:\,y_q\in\tau}\omega_q\,\phi_i^{(\tau)}(y_q)\,(\mathbf{f})_q$ form the
quadrature approximation of the load vector $\int_\tau \phi_i^{(\tau)} f\,\de y$,
so that $(\Mtri)^{-1}\mathsf{E}_Q^{\top}\mathsf{W}\mathbf{f}$ collects the
coefficients of the (quadrature-based) elementwise $L^2$-projection of $f$ onto
piecewise-linear functions, which $\mathsf{E}_C$ finally evaluates at the interior
Cartesian nodes.

The full operator has the block structure
\begin{equation}\label{eq:T_HC_I_block}
  \mathsf{T}_Q^{C,\mathrm{I}} :=
  \begin{bmatrix}
    \widetilde{\mathsf{T}}_Q^{C,\mathrm{I}} \\
    \mathsf{0}
  \end{bmatrix} \in \mathbb{R}^{N_C\times N_Q},
\end{equation}
with the rows indexed by $\mathcal{I}$ given by $\widetilde{\mathsf{T}}_Q^{C,\mathrm{I}}$ and an arbitrary---here, zero---choice for the remaining $N_C^{\mathrm{ext}}$ exterior-node rows.
This choice has no effect on $\mathsf{P}$, by Remark~\ref{rmk:exterior_columns}. Indeed, the zero columns of $\mathsf{S}^{(C)}-\mathsf{I}_{N_C}$ at the exterior Cartesian nodes annihilate the corresponding rows of any matrix multiplying it from the right, so the exterior rows of $\mathsf{T}_Q^{C,\mathrm{I}}$, and likewise those of the round trip $\mathsf{R}^{(C)}_{\mathrm{I}}$ defined below, never reach the preconditioner~\eqref{eq:preconditioner_factored}. 

Moreover, since $\mathsf M^{\rm tri}$ is block-diagonal with respect to the
elements $\tau\in\mathcal T$, and since the entries
$(\mathsf E_C)_{j,(\tau,i)}$ and $(\mathsf E_Q)_{q,(\tau,i)}$ vanish whenever
$z_j\notin\tau$ and $y_q\notin\tau$, respectively, only those elements $\tau$
containing both $z_j$ and $y_q$ contribute to the $(j,q)$ entry.
Consequently,
\begin{equation}\label{eq:sparse_I}
  \bigl(\widetilde{\mathsf T}_Q^{C,\mathrm I}\bigr)_{jq}=0
  \qquad\text{whenever}\qquad
  \{z_j,y_q\}\not\subset\tau \quad\text{for all }\tau\in\mathcal T.
\end{equation}

In what follows we denote the round-trip operator associated with this construction by
\begin{equation}\label{eq:T^(C)_I}
  \mathsf{R}^{(C)}_{\mathrm{I}} := \mathsf{T}_Q^{C,\mathrm{I}}\mathsf{T}_C^Q.
\end{equation}

\subsubsection{Approach~II: Renormalized transpose}\label{ssubsec:renormalized_transpose}
We define $\mathsf{T}_Q^{C,\mathrm{II}} \in \mathbb{R}^{N_C \times N_Q}$ as the
renormalized transpose of $\mathsf{T}_C^Q$:
\begin{equation}\label{eq:T_H^C_def}
  \mathsf{T}_Q^{C,\mathrm{II}} := \mathsf{D}^{-1}(\mathsf{T}_C^Q)^{\top},
\end{equation}
where $\mathsf{D}\in\mathbb{R}^{N_C\times N_C}$ is the diagonal matrix with entries
\begin{equation}\label{eq:D_def}
  (\mathsf{D})_{jj} =
  \begin{cases}
    \displaystyle\sum_{q=1}^{N_Q}(\mathsf{T}_C^Q)_{qj}, & \text{$j$ a pixelation node},\\[4pt]
    1, & \text{otherwise}.
  \end{cases}
\end{equation}
The column sum is positive exactly at the pixelation nodes and vanishes at the remaining Cartesian nodes, which no quadrature node reaches. At those remaining nodes we make the arbitrary choice of $1$, which keeps $\mathsf{D}$ invertible. Since the columns of $\mathsf{T}_C^Q$, hence the rows of $(\mathsf{T}_C^Q)^{\top}$, vanish at those remaining nodes, the value assigned there does not affect $\mathsf{T}_Q^{C,\mathrm{II}}$.

As before, we denote the round-trip operator associated with this construction by
\begin{equation}\label{eq:T^(C)_II}
  \mathsf{R}^{(C)}_{\mathrm{II}} := \mathsf{T}_Q^{C,\mathrm{II}}\mathsf{T}_C^Q.
\end{equation}

\begin{remark}[Choice of the transfer operator]\label{rmk:choice_of_mu}
After a comprehensive comparison, we use Approach~II in all numerical experiments of
Section~\ref{sec:numerical_examples}. As documented in
Section~\ref{subsec:operator_selection}, both constructions yield an invertible preconditioner, but Approach~II gives a better conditioned system $\mathsf{P}\mathsf{L}$, with condition number nearly constant under mesh refinement, and consistently fewer GMRES iterations, with counts that remain nearly unchanged as the frequency and the material
  contrast increase. The two also differ in the hypotheses under which we are able to establish invertibility, none of which appear to be necessary in practice. For Approach~II, invertibility is guaranteed at every mesh ratio, provided the form of $\mathsf{S}^{(C)}$ in a suitable inner product avoids the non-positive real axis (Theorem~\ref{thm:P_invertible_unconditional}), whereas for Approach~I it is guaranteed for a sufficiently small mesh ratio $r_h$~\eqref{eq:mesh_ratio}, with no assumption on $\mathsf{S}^{(C)}$ beyond its invertibility (Theorem~\ref{thm:approachI_invertibility}).
\end{remark}
\section{Invertibility of the preconditioner}\label{sec:invertibility}

In this section we establish sufficient conditions for the invertibility of the preconditioner defined in~\eqref{eq:preconditioner_factored}. We begin by reducing the question to the invertibility of a matrix on the Cartesian grid, of size $N_C\times N_C$.

\begin{lemma}\label{lem:well_poss_P}
Let $\mathsf{S}^{(C)}$, $\mathsf{T}_C^Q$, and $\mathsf{T}_Q^C$ be as defined in
Section~\ref{sec:preconditioner}, let $\mathsf{P}$ be the
preconditioner~\eqref{eq:preconditioner_split}, and let
$\mathsf{R}^{(C)}=\mathsf{T}_Q^C\mathsf{T}_C^Q$ be the round trip~\eqref{eq:round_trip_def}.
Define
\begin{equation}\label{eq:def_Q}
    \mathsf{Q} := \mathsf{I}_{N_C} + (\mathsf{S}^{(C)} - \mathsf{I}_{N_C})\mathsf{R}^{(C)}
    \in\mathbb{C}^{N_C\times N_C}.
\end{equation}
Then $\mathsf{P}$ is invertible if and only if $\mathsf{Q}$ is.
\end{lemma}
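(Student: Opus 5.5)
This is the Sylvester determinant identity, $\det(\mathsf{I}_m+\mathsf{X}\mathsf{Y})=\det(\mathsf{I}_n+\mathsf{Y}\mathsf{X})$ for $\mathsf{X}\in\mathbb{C}^{m\times n}$ and $\mathsf{Y}\in\mathbb{C}^{n\times m}$. I would apply it to the factored form~\eqref{eq:preconditioner_factored} of $\mathsf{P}$ with the choice
\[
\mathsf{X}:=\mathsf{T}_C^Q(\mathsf{S}^{(C)}-\mathsf{I}_{N_C})\in\mathbb{C}^{N_Q\times N_C},
\qquad
\mathsf{Y}:=\mathsf{T}_Q^C\in\mathbb{R}^{N_C\times N_Q}.
\]
This gives $\mathsf{P}=\mathsf{I}_{N_Q}+\mathsf{X}\mathsf{Y}$. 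Moving $\mathsf{T}_C^Q$ to the other side of the product gives
\[
\mathsf{I}_{N_C}+\mathsf{Y}\mathsf{X}=\mathsf{I}_{N_C}+\mathsf{T}_Q^C\mathsf{T}_C^Q(\mathsf{S}^{(C)}-\mathsf{I}_{N_C})=\mathsf{I}_{N_C}+\mathsf{R}^{(C)}(\mathsf{S}^{(C)}-\mathsf{I}_{N_C}).
\]
This is not $\mathsf{Q}$ as written, since the factors appear in reversed order.

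I would fix the ordering with a second cyclic swap on the $N_C\times N_C$ level. Take $\mathsf{X}'=\mathsf{R}^{(C)}$ and $\mathsf{Y}'=\mathsf{S}^{(C)}-\mathsf{I}_{N_C}$, both square. Then
\[
\det\bigl(\mathsf{I}_{N_C}+\mathsf{R}^{(C)}(\mathsf{S}^{(C)}-\mathsf{I}_{N_C})\bigr)=\det\bigl(\mathsf{I}_{N_C}+(\mathsf{S}^{(C)}-\mathsf{I}_{N_C})\mathsf{R}^{(C)}\bigr)=\det\mathsf{Q}.
\]
Chaining the two identities yields $\det\mathsf{P}=\det\mathsf{Q}$, and the equivalence of invertibility follows immediately.

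Alternatively, one can go in a single step by setting $\mathsf{X}=\mathsf{T}_C^Q$ and $\mathsf{Y}=(\mathsf{S}^{(C)}-\mathsf{I}_{N_C})\mathsf{T}_Q^C$. Then $\mathsf{Y}\mathsf{X}=(\mathsf{S}^{(C)}-\mathsf{I}_{N_C})\mathsf{R}^{(C)}$, so $\mathsf{I}_{N_C}+\mathsf{Y}\mathsf{X}=\mathsf{Q}$ exactly. This is the cleaner route, and I would present it.

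For a self-contained argument, I would either prove Sylvester's identity from the block factorizations of $\begin{bmatrix}\mathsf{I}&-\mathsf{X}\\ \mathsf{Y}&\mathsf{I}\end{bmatrix}$, or give explicit inverses. Assuming $\mathsf{Q}$ is invertible, the inverse is
\[
\mathsf{P}^{-1}=\mathsf{I}_{N_Q}-\mathsf{T}_C^Q\mathsf{Q}^{-1}(\mathsf{S}^{(C)}-\mathsf{I}_{N_C})\mathsf{T}_Q^C,
\]
and conversely
\[
\mathsf{Q}^{-1}=\mathsf{I}_{N_C}-(\mathsf{S}^{(C)}-\mathsf{I}_{N_C})\mathsf{T}_Q^C\mathsf{P}^{-1}\mathsf{T}_C^Q.
\]
Both are checked by direct multiplication, using push-through identities such as $\mathsf{Y}(\mathsf{I}+\mathsf{X}\mathsf{Y})=(\mathsf{I}+\mathsf{Y}\mathsf{X})\mathsf{Y}$.

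There is no genuine obstacle here. The only care needed is choosing the factorization so that the reduced matrix matches $\mathsf{Q}$ in the stated ordering, and confirming that the rectangular dimensions are compatible. The lemma needs no structural assumptions on the transfer operators or on $\mathsf{S}^{(C)}$ beyond these dimensions, so Remark~\ref{rmk:exterior_columns} is not required.
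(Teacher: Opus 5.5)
Your proposal is correct, and the route you say you would present — $\mathsf{X}=\mathsf{T}_C^Q$, $\mathsf{Y}=(\mathsf{S}^{(C)}-\mathsf{I}_{N_C})\mathsf{T}_Q^C$, so that $\mathsf{I}_{N_C}+\mathsf{Y}\mathsf{X}=\mathsf{Q}$ exactly, followed by the Sylvester determinant identity — is precisely the paper's proof. Your two-step variant and the explicit push-through inverses are also correct, but they are not needed.
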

\begin{proof}
Set $\mathsf{A} := \mathsf{T}_C^Q\in\mathbb{R}^{N_Q\times N_C}$ and
$\mathsf{B} := (\mathsf{S}^{(C)}-\mathsf{I}_{N_C})\mathsf{T}_Q^C\in\mathbb{C}^{N_C\times N_Q}$.
Then $\mathsf{P}=\mathsf{I}_{N_Q}+\mathsf{A}\mathsf{B}$, while, since
$\mathsf{T}_Q^C\mathsf{T}_C^Q=\mathsf{R}^{(C)}$,
\[
    \mathsf{B}\mathsf{A}
    = (\mathsf{S}^{(C)}-\mathsf{I}_{N_C})\mathsf{T}_Q^C\mathsf{T}_C^Q
    = (\mathsf{S}^{(C)}-\mathsf{I}_{N_C})\mathsf{R}^{(C)},
\]
so $\mathsf{Q}=\mathsf{I}_{N_C}+\mathsf{B}\mathsf{A}$. By the Weinstein--Aronszajn (Sylvester)
determinant identity~\cite{horn2012matrix}, $\det(\mathsf{I}_{N_Q}+\mathsf{A}\mathsf{B})
=\det(\mathsf{I}_{N_C}+\mathsf{B}\mathsf{A})$, that is $\det\mathsf{P}=\det\mathsf{Q}$. The
two determinants therefore vanish together, so $\mathsf{P}$ is nonsingular if and only if
$\mathsf{Q}$ is.
\end{proof}

By Lemma~\ref{lem:well_poss_P}, the invertibility of $\mathsf{P}$ reduces to that of $\mathsf{Q}$. We derive sufficient conditions for the latter separately for each of the two constructions of $\mathsf{T}_Q^C$ introduced in Section~\ref{sec:preconditioner}, presented in the same order as their definitions: Section~\ref{subsec:analysis_approach_I} treats Approach~I and Section~\ref{subsec:analysis_approach_II} treats Approach~II.

\subsection{Analysis of Approach~I}\label{subsec:analysis_approach_I}

The analysis of Approach~I relies on a perturbation argument. Definition~\eqref{eq:def_Q}
can be rewritten as
$\mathsf{Q}=\mathsf{S}^{(C)}-(\mathsf{S}^{(C)}-\mathsf{I}_{N_C})(\mathsf{I}_{N_C}-\mathsf{R}^{(C)})$,
which exhibits $\mathsf{Q}$ as a perturbation of the invertible matrix $\mathsf{S}^{(C)}$,
small whenever the round trip is close to the identity. The next lemma makes this precise.
Only the rows of $\mathsf{I}_{N_C}-\mathsf{R}^{(C)}$ indexed by the interior Cartesian index set $\mathcal{I}$ in~\eqref{eq:interior_cartesian_indices} enter the
resulting condition, since $\mathsf{S}^{(C)}-\mathsf{I}_{N_C}$ annihilates the remaining ones.

\begin{lemma}\label{lem:Q_invertibility}
Let $\mathsf{S}^{(C)}\in\mathbb{C}^{N_C\times N_C}$ be invertible, let
$\mathsf{R}^{(C)}_{\mathrm{I}}\in\mathbb{R}^{N_C\times N_C}$ be the round-trip
matrix~\eqref{eq:T^(C)_I}, and let $\mathsf{Q}$ be as in~\eqref{eq:def_Q} with
$\mathsf{R}^{(C)}=\mathsf{R}^{(C)}_{\mathrm{I}}$. Set
$\mathsf{Z}:=\mathsf{I}_{N_C}-\mathsf{R}^{(C)}_{\mathrm{I}}$ and let
$\mathsf{Z}_{\mathcal{I}}\in\mathbb{R}^{N_C^{\mathrm{int}}\times N_C}$ be the submatrix
of $\mathsf{Z}$ formed by the rows indexed
by~$\mathcal{I}$~\eqref{eq:interior_cartesian_indices}. Then $\mathsf{Q}$ is
invertible whenever
\begin{equation}\label{eq:invertibility_condition_unsplit}
\bigl\|(\mathsf{S}^{(C)})^{-1}(\mathsf{S}^{(C)}-\mathsf{I}_{N_C})\mathsf{Z}\bigr\|_\infty < 1
\end{equation}
and, in particular, whenever the more restrictive condition
\begin{equation}\label{eq:invertibility_condition_T}
\bigl\|(\mathsf{S}^{(C)})^{-1}(\mathsf{S}^{(C)}-\mathsf{I}_{N_C})\bigr\|_\infty\,\bigl\|\mathsf{Z}_{\mathcal{I}}\bigr\|_\infty
< 1
\end{equation}
holds.
\end{lemma}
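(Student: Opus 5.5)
The plan is to factor out $\mathsf{S}^{(C)}$ and apply a Neumann-series argument. Starting from the rewriting noted just before the lemma,
\[
\mathsf{Q}=\mathsf{I}_{N_C}+(\mathsf{S}^{(C)}-\mathsf{I}_{N_C})(\mathsf{I}_{N_C}-\mathsf{Z})
=\mathsf{S}^{(C)}-(\mathsf{S}^{(C)}-\mathsf{I}_{N_C})\mathsf{Z},
\]
and using the invertibility of $\mathsf{S}^{(C)}$, we can write
\[
\mathsf{Q}=\mathsf{S}^{(C)}\bigl(\mathsf{I}_{N_C}-\mathsf{K}\bigr),\qquad \mathsf{K}:=(\mathsf{S}^{(C)})^{-1}(\mathsf{S}^{(C)}-\mathsf{I}_{N_C})\mathsf{Z}.
\]
If $\|\mathsf{K}\|_\infty<1$, then $\mathsf{I}_{N_C}-\mathsf{K}$ is invertible. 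The Neumann series converges; equivalently, $\|(\mathsf{I}-\mathsf{K})\mathbf{x}\|_\infty\ge(1-\|\mathsf{K}\|_\infty)\|\mathbf{x}\|_\infty$, so $\mathsf{I}-\mathsf{K}$ has trivial kernel. Hence $\mathsf{Q}$ is a product of two invertible matrices. This proves \eqref{eq:invertibility_condition_unsplit}.

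For the restricted condition \eqref{eq:invertibility_condition_T}, I would use the structural assumption of Remark~\ref{rmk:exterior_columns}: $\mathsf{S}^{(C)}-\mathsf{I}_{N_C}$ has identically zero columns at the exterior indices $j\notin\mathcal{I}$. Let $\mathsf{G}:=(\mathsf{S}^{(C)})^{-1}(\mathsf{S}^{(C)}-\mathsf{I}_{N_C})$. Left multiplication by $(\mathsf{S}^{(C)})^{-1}$ preserves zero columns, so $\mathsf{G}$ also vanishes on the columns outside $\mathcal{I}$. Write $\mathsf{G}_{:,\mathcal{I}}$ for its columns indexed by $\mathcal{I}$. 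Then
\[
\mathsf{G}\mathsf{Z}=\mathsf{G}_{:,\mathcal{I}}\,\mathsf{Z}_{\mathcal{I}},
\]
because in the product only the rows of $\mathsf{Z}$ indexed by $\mathcal{I}$ meet nonzero columns of $\mathsf{G}$.

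Submultiplicativity of the induced $\infty$-norm then gives $\|\mathsf{K}\|_\infty\le\|\mathsf{G}_{:,\mathcal{I}}\|_\infty\|\mathsf{Z}_{\mathcal{I}}\|_\infty$. Since $\mathsf{G}_{:,\mathcal{I}}$ is obtained from $\mathsf{G}$ by deleting zero columns, its row sums are unchanged, so $\|\mathsf{G}_{:,\mathcal{I}}\|_\infty=\|\mathsf{G}\|_\infty$. Therefore \eqref{eq:invertibility_condition_T} implies \eqref{eq:invertibility_condition_unsplit}, and the first part applies.

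The only delicate point is this column/row bookkeeping. The norm of the full $\mathsf{Z}$ must not enter, only $\mathsf{Z}_{\mathcal{I}}$, and this is exactly where the exact zero-column assumption on $\mathsf{S}^{(C)}-\mathsf{I}_{N_C}$ is needed. The rest is routine.
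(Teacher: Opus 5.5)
Your proof is correct and follows essentially the paper's argument. Both factor $\mathsf{Q}=\mathsf{S}^{(C)}(\mathsf{I}_{N_C}-\mathsf{K})$ and apply a Neumann-series bound, and both use the exact zero-column property of $\mathsf{S}^{(C)}-\mathsf{I}_{N_C}$ so that only the interior rows $\mathsf{Z}_{\mathcal{I}}$ enter the estimate. The difference is purely bookkeeping: you delete the zero columns of $\mathsf{G}$ and use rectangular submultiplicativity, whereas the paper inserts the diagonal projector $\mathsf{\Pi}$ onto $\mathcal{I}$ and works with square matrices.
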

\begin{proof}
Writing $\mathsf{R}^{(C)}_{\mathrm{I}}=\mathsf{I}_{N_C}-\mathsf{Z}$ in~\eqref{eq:def_Q} and using the
invertibility of $\mathsf{S}^{(C)}$,
\[
    \mathsf{Q}
    = \mathsf{I}_{N_C}+(\mathsf{S}^{(C)}-\mathsf{I}_{N_C})(\mathsf{I}_{N_C}-\mathsf{Z})
    = \mathsf{S}^{(C)}\bigl[\mathsf{I}_{N_C} - (\mathsf{S}^{(C)})^{-1}(\mathsf{S}^{(C)}-\mathsf{I}_{N_C})\mathsf{Z}\bigr],
\]
so, by the Neumann series, $\mathsf{Q}$ is invertible
under~\eqref{eq:invertibility_condition_unsplit}. That~\eqref{eq:invertibility_condition_T}
implies~\eqref{eq:invertibility_condition_unsplit} follows by splitting the product
submultiplicatively, as we now show.
Let $\mathsf{\Pi}$ denote the diagonal projector onto the interior indices, i.e.,
$\mathsf{\Pi}_{jj}=1$ for $j\in\mathcal{I}$ and $\mathsf{\Pi}_{jj}=0$ otherwise. Since
$\mathsf{S}^{(C)}-\mathsf{I}_{N_C}$ has zero columns off $\mathcal{I}$ by~\eqref{eq:S_minus_I}, it
satisfies $(\mathsf{S}^{(C)}-\mathsf{I}_{N_C})\mathsf{\Pi}=\mathsf{S}^{(C)}-\mathsf{I}_{N_C}$, whereas
$\mathsf{\Pi}\mathsf{Z}$ is $\mathsf{Z}$ with its exterior rows set to zero, so that
$\|\mathsf{\Pi}\mathsf{Z}\|_\infty=\|\mathsf{Z}_{\mathcal{I}}\|_\infty$. Inserting $\mathsf{\Pi}$ and
using the submultiplicativity of the induced $\infty$-norm therefore gives
\[
    \bigl\|(\mathsf{S}^{(C)})^{-1}(\mathsf{S}^{(C)}-\mathsf{I}_{N_C})\mathsf{Z}\bigr\|_\infty
    = \bigl\|(\mathsf{S}^{(C)})^{-1}(\mathsf{S}^{(C)}-\mathsf{I}_{N_C})\,\mathsf{\Pi}\mathsf{Z}\bigr\|_\infty
    \leq \bigl\|(\mathsf{S}^{(C)})^{-1}(\mathsf{S}^{(C)}-\mathsf{I}_{N_C})\bigr\|_\infty\,\bigl\|\mathsf{Z}_{\mathcal{I}}\bigr\|_\infty,
\]
whose right-hand side is smaller than one exactly under~\eqref{eq:invertibility_condition_T}.
\end{proof}

It remains to bound $\|\mathsf{Z}_{\mathcal{I}}\|_\infty$ for the Approach~I transfer operator
$\mathsf{T}_Q^{C,\mathrm{I}}$ of~\eqref{eq:T_HC_I_block}. We begin with a row-sum property of
$\mathsf{T}_Q^{C,\mathrm{I}}$, on which that bound is based. We note that all its hypotheses on the quadrature
rule are met by the VR rule of Section~\ref{sec:Nystrom_discretization}.

\begin{lemma}
  \label{lem:rowsum_THC}
Assume that all elements $\tau\in\mathcal{T}$ are straight-edged, so that the local mass
matrices are given exactly by~\eqref{eq:mass_closed_form}, and that the quadrature rule has
positive weights $\omega_q>0$, has all of its nodes in the interiors of the elements, and
satisfies $d_{\mathrm{ex}}(p)\geq1$, so that it integrates linear polynomials exactly on each
element. Then for any $\tau\in\mathcal{T}$,
\begin{equation}\label{eq:quad_linear_exact}
    \sum_{q\,:\,y_q\in\tau}\phi_i^{(\tau)}(y_q)\,\omega_q
    = \int_\tau \phi_i^{(\tau)}(y)\de y
    = \frac{|\tau|}{3},
    \qquad i\in\{1,2,3\},
\end{equation}
where $\{\phi_i^{(\tau)}\}_{i=1}^3$ are the $\mathcal{P}_1$ (linear Lagrange) shape functions associated with the triangle $\tau$.
Moreover,
\begin{equation}\label{eq:rowsum_THC}
    \sum_{q=1}^{N_Q}\bigl(\mathsf{T}_Q^{C,\mathrm{I}}\bigr)_{jq} = 1,
    \qquad j\in\mathcal{I}.
\end{equation}
\end{lemma}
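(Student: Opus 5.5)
The identity~\eqref{eq:quad_linear_exact} will come first, since the row-sum statement rests on it. Each $\phi_i^{(\tau)}$ is a linear polynomial on the straight-edged triangle $\tau$, and the rule has $d_{\mathrm{ex}}(p)\geq1$, so the quadrature sum equals $\int_\tau\phi_i^{(\tau)}\,\de y$ exactly. This integral is $|\tau|/3$. One way to see it is the barycentric formula $\int_\tau\lambda_i=|\tau|/3$. Alternatively, the three shape functions sum to $1$ and, by symmetry under affine maps permuting the vertices, have equal integrals. Because the nodes lie in the interiors of the elements, the set $\{q: y_q\in\tau\}$ is exactly the set of local nodes of $\tau$, with no double counting across elements. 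Positivity of the weights is not needed for this step.

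For~\eqref{eq:rowsum_THC}, I fix $j\in\mathcal{I}$ and compute $\widetilde{\mathsf{T}}_Q^{C,\mathrm{I}}\mathbf{1}_{N_Q}=\mathsf{E}_C(\Mtri)^{-1}\mathsf{E}_Q^\top\mathsf{W}\mathbf{1}_{N_Q}$ from right to left.

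\begin{enumerate}
\item By the definition of $\mathsf{E}_Q$ and~\eqref{eq:quad_linear_exact}, the $(\tau,i)$ entry of $\mathsf{E}_Q^\top\mathsf{W}\mathbf{1}_{N_Q}$ is $\sum_{q:y_q\in\tau}\omega_q\phi_i^{(\tau)}(y_q)=|\tau|/3$. On each element block this vector is therefore $\tfrac{|\tau|}{3}(1,1,1)^\top$.
\item Applying the local inverse from~\eqref{eq:mass_closed_form}, each row of $\tfrac{3}{|\tau|}\begin{bmatrix}3&-1&-1\\-1&3&-1\\-1&-1&3\end{bmatrix}$ sums to $\tfrac{3}{|\tau|}$. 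This gives coefficient vector $\tfrac{3}{|\tau|}\cdot\tfrac{|\tau|}{3}(1,1,1)^\top=(1,1,1)^\top$ on every element. This is just the $L^2$ projection reproducing the constant $1$.
\item Applying $\mathsf{E}_C$, row $j$ picks out the single chosen element $\tau\ni z_j^{\mathrm{int}}$. The result is $\sum_{i=1}^3\phi_i^{(\tau)}(z_j^{\mathrm{int}})\cdot1=1$, by the partition of unity of the $\mathcal{P}_1$ Lagrange basis on $\tau$.
\end{enumerate}

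Since the rows of $\mathsf{T}_Q^{C,\mathrm{I}}$ indexed by $\mathcal{I}$ are exactly those of $\widetilde{\mathsf{T}}_Q^{C,\mathrm{I}}$ by~\eqref{eq:T_HC_I_block}, this proves~\eqref{eq:rowsum_THC}.

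The only delicate step is step~3, namely that each row of $\mathsf{E}_C$ is well defined. An interior Cartesian node $z_j^{\mathrm{int}}\in\Omega$ must lie in the closure of some element of the conforming mesh $\mathcal{T}$, because $\overline\Omega$ is covered by the mesh. If it lies on an edge or vertex, the fixed choice of a single element in the definition of $\mathsf{E}_C$ ensures that exactly three basis functions contribute, so the partition of unity applies without double counting. The remaining steps are direct calculations with the closed forms already available.
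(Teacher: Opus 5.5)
Your proof is correct and is essentially the paper's argument. The paper first writes out the entries $(\widetilde{\mathsf{T}}_Q^{C,\mathrm{I}})_{jq}=\frac{3\omega_q}{|\tau|}\sum_i\phi_i^{(\tau)}(z_j)(4\phi_i^{(\tau)}(y_q)-1)$, which it reuses in the next lemma, and then sums over $q$ using exactness on linears; you instead evaluate $\mathsf{E}_C(\Mtri)^{-1}\mathsf{E}_Q^\top\mathsf{W}\mathbf{1}_{N_Q}$ from right to left, which is the same computation in a different order, and you are right that positivity of the weights is not needed here (the paper uses it only in the subsequent estimate).
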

\begin{proof}
Let $j\in\mathcal{I}$ and let $\tau\subset\overline\Omega$ be the element assigned to $z_j$
in the definition of $\mathsf{E}_C$~\eqref{eq:T_HC_proj_def}, so that
$(\mathsf{E}_C)_{j,(\tau',i)}=0$ for every $\tau'\neq\tau$. Since $\Mtri$ is block diagonal,
only the block of $\tau$ contributes to row $j$, and we can write the entries of block
$\widetilde{\mathsf{T}}_Q^{C,\mathrm{I}}$ in~\eqref{eq:T_HC_I_block} as
\begin{equation}\label{eq:expression_T_HC_proj}\begin{split}
    \bigl(\widetilde{\mathsf{T}}_Q^{C,\mathrm{I}}\bigr)_{jq}
    &= \frac{3\,\omega_q}{|\tau|}\sum_{i=1}^{3}\phi_i^{(\tau)}(z_j)
      \Bigl(3\phi_i^{(\tau)}-\phi_{i'}^{(\tau)}-\phi_{i''}^{(\tau)}\Bigr)(y_q)\quad \left(\{i,i',i''\}=\{1,2,3\}\right)\\
      &=\frac{3\,\omega_q}{|\tau|}\sum_{i=1}^{3}\phi_i^{(\tau)}(z_j)
      \Bigl(4\phi_i^{(\tau)}(y_q)-1\Bigr),
\end{split}
\end{equation}
which follows from~\eqref{eq:T_HC_proj_def},~\eqref{eq:mass_closed_form}, and the fact that $\sum_i\phi_i^{(\tau)}(y_q)=1$. (Here the sum runs over $i\in\{1,2,3\}$, and $i',i''$ denote the two remaining indices; by the identity $\sum_i\phi_i^{(\tau)}(y_q)=1$, each term inside the parentheses equals $4\phi_i^{(\tau)}(y_q)-1$.)

Only the quadrature nodes lying in $\tau$ contribute, since
$(\mathsf{E}_Q)_{q,(\tau,i)}=0$ otherwise, and each $y_q$ lies in exactly one element
because the quadrature nodes are interior to the elements. Restricting the sum
in~\eqref{eq:rowsum_THC} accordingly, we get
\[
    \sum_{q=1}^{N_Q}\bigl(\mathsf{T}_Q^{C,\mathrm{I}}\bigr)_{jq}
    = \sum_{i=1}^{3}\phi_i^{(\tau)}(z_j)\,\frac{3}{|\tau|}
      \sum_{y_q\in\tau}\Bigl(4\phi_i^{(\tau)}(y_q)-1\Bigr)\,\omega_q.
\]
Since $\tau$ is straight-edged, condition~\eqref{eq:quad_linear_exact} applies and the inner sum above evaluates to
\[
    \sum_{y_q\in\tau}\Bigl(4\phi_i^{(\tau)}(y_q)-1\Bigr)\,\omega_q
    = \frac{|\tau|}{3}.
\]
Substituting back and using again the identity $\sum_{i=1}^{3}\phi_i^{(\tau)}(z_j)=1$, we arrive at~\eqref{eq:rowsum_THC}. 
\end{proof}

Building on the unit row sums of Lemma~\ref{lem:rowsum_THC}, we can now show that, at
interior Cartesian nodes, the round-trip operator
$\mathsf{R}^{(C)}_{\mathrm{I}}$ approaches the identity as $r_h\to 0$.

\begin{lemma}\label{lem:approachI_estimate}
Let the assumptions of Lemma~\ref{lem:rowsum_THC} hold. Then, for every $h<h_C$, the
round-trip operator $\mathsf{R}^{(C)}_{\mathrm{I}}$ defined in~\eqref{eq:T^(C)_I} satisfies
\begin{equation}\label{eq:IminusTc_exact}
    \bigl\|\mathsf{Z}_{\mathcal{I}}\bigr\|_\infty
    = \max_{j\in\mathcal{I}}\biggl(
    \bigl|1-\bigl(\mathsf{R}^{(C)}_{\mathrm{I}}\bigr)_{jj}\bigr|
    +\sum_{j':j'\neq j}
     \bigl|\bigl(\mathsf{R}^{(C)}_{\mathrm{I}}\bigr)_{jj'}\bigr|\biggr)
    \leq 14\sqrt2\, r_h,
\end{equation}
where $\mathsf{Z}$ and $\mathsf{Z}_{\mathcal{I}}$ are as in
Lemma~\ref{lem:Q_invertibility} and $r_h = h/h_C$ is the mesh
ratio~\eqref{eq:mesh_ratio}. In particular, $\mathsf{R}^{(C)}_{\mathrm{I}}$ approaches the
identity at every interior node as $r_h\to0$.
\end{lemma}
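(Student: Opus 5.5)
The equality in \eqref{eq:IminusTc_exact} is just the definition of the induced $\infty$-norm applied to the rows $j\in\mathcal{I}$ of $\mathsf{Z}=\mathsf{I}_{N_C}-\mathsf{R}^{(C)}_{\mathrm{I}}$. The work is in the inequality. The plan is to write each interior row of $\mathsf{Z}$ as a weighted average of pointwise interpolation differences, using the unit row sums of Lemma~\ref{lem:rowsum_THC}. Fix $j\in\mathcal{I}$ and $\mathbf{v}\in\mathbb{C}^{N_C}$. By \eqref{eq:rowsum_THC},
\[
(\mathsf{Z}\mathbf{v})_j=\sum_{q}\bigl(\mathsf{T}_Q^{C,\mathrm{I}}\bigr)_{jq}\bigl(v_j-(\mathsf{T}_C^Q\mathbf{v})_q\bigr)
=\sum_{q}\bigl(\mathsf{T}_Q^{C,\mathrm{I}}\bigr)_{jq}\sum_{j'}\bigl(\psi_{j'}(z_j)-\psi_{j'}(y_q)\bigr)v_{j'},
\]
where the second equality uses $\psi_{j'}(z_j)=\delta_{jj'}$. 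Hence
\[
\|\mathsf{Z}_{\mathcal{I}}\|_\infty\le\max_{j\in\mathcal{I}}\Bigl(\sum_q\bigl|\bigl(\mathsf{T}_Q^{C,\mathrm{I}}\bigr)_{jq}\bigr|\Bigr)\Bigl(\max_{q}\sum_{j'}\bigl|\psi_{j'}(z_j)-\psi_{j'}(y_q)\bigr|\Bigr),
\]
where the inner maximum runs only over those $q$ with $\bigl(\mathsf{T}_Q^{C,\mathrm{I}}\bigr)_{jq}\neq0$. I would then bound the two factors separately.

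For the interpolation factor, use the sparsity \eqref{eq:sparse_I}: a nonzero entry requires $y_q$ and $z_j$ to lie in a common element, so $|y_q-z_j|\le h<h_C$. Since $z_j$ is a grid vertex and $y_q$ is closer to it than $h_C$, $y_q$ lies in one of the (at most four) cells having $z_j$ as a corner. The segment $[z_j,y_q]$ then lies in that single cell. On that cell only its four vertex functions $\psi_{j'}$ are nonzero. Each of them is bilinear with $|\nabla\psi_{j'}|\le\sqrt2/h_C$, which follows from \eqref{eq:psi_def}. By the mean value theorem, $|\psi_{j'}(z_j)-\psi_{j'}(y_q)|\le\sqrt2\,h/h_C$, so the factor is at most $4\sqrt2\,r_h$.

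For the weight factor, use the explicit formula \eqref{eq:expression_T_HC_proj}:
\[
\sum_q\bigl|\bigl(\mathsf{T}_Q^{C,\mathrm{I}}\bigr)_{jq}\bigr|\le\frac{3}{|\tau|}\sum_i\phi_i^{(\tau)}(z_j)\sum_{y_q\in\tau}\omega_q\bigl|4\phi_i^{(\tau)}(y_q)-1\bigr|.
\]
This uses $\phi_i^{(\tau)}(z_j)\ge0$ and $\omega_q>0$. The crude bound $|4\phi-1|\le3$ only gives $9$, which would yield $36\sqrt2\,r_h$.

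To get the sharper constant I would apply Cauchy--Schwarz with the positive weights. Since $\sum_{y_q\in\tau}\omega_q=|\tau|$, this gives
\[
\sum_{y_q\in\tau}\omega_q\,|4\phi_i-1|\le|\tau|^{1/2}\Bigl(\sum_{y_q\in\tau}\omega_q(4\phi_i-1)^2\Bigr)^{1/2}.
\]
When the rule is exact for quadratics ($d_{\mathrm{ex}}\ge2$, true for VR), the last sum equals $\int_\tau(4\phi_i-1)^2=\tfrac{16}{6}|\tau|-\tfrac{8}{3}|\tau|+|\tau|=|\tau|$. With $\sum_i\phi_i^{(\tau)}(z_j)=1$, the weight factor is then at most $3$, giving $12\sqrt2\,r_h\le14\sqrt2\,r_h$.

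The main obstacle is this weight-factor constant under only the stated hypothesis $d_{\mathrm{ex}}\ge1$. In that case I would instead bound $\sum_q\omega_q|4\phi_i-1|$ by splitting it into the part where $4\phi_i\ge1$ and the part where $4\phi_i<1$. I would then combine the exact linear moment \eqref{eq:quad_linear_exact} with positivity to aim for a factor of $7/2$, which multiplied by $4\sqrt2\,r_h$ gives exactly $14\sqrt2\,r_h$. If that fails, I would fall back on assuming quadratic exactness, which VR satisfies.
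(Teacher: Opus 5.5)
Your decomposition is the paper's in different notation: bounding $\sum_{j'}|\mathsf{Z}_{jj'}|$ by $\sum_q|(\mathsf{T}_Q^{C,\mathrm{I}})_{jq}|\sum_{j'}|\psi_{j'}(z_j)-\psi_{j'}(y_q)|$ is exactly what the paper's diagonal/off-diagonal split produces. The gap is that you \emph{bound} the inner sum when you should \emph{evaluate} it. Your mean-value estimate over four vertex functions gives $4\sqrt2\,r_h$. But since $\psi_{j'}(z_j)=\delta_{jj'}$, $\psi_{j'}\geq0$ and $\sum_{j'}\psi_{j'}(y_q)=1$,
\[
\sum_{j'}\bigl|\psi_{j'}(z_j)-\psi_{j'}(y_q)\bigr| = \bigl(1-\psi_j(y_q)\bigr)+\sum_{j'\neq j}\psi_{j'}(y_q) = 2\bigl(1-\psi_j(y_q)\bigr)\leq 2\sqrt2\,r_h ,
\]
where the last step uses $1-\psi_j(y_q)\leq(\alpha_q+\beta_q)/h_C\leq\sqrt2\,|z_j-y_q|/h_C$. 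Having lost this factor of $2$, you need the weight factor $\sum_q|(\mathsf{T}_Q^{C,\mathrm{I}})_{jq}|\leq 7/2$. Under the lemma's actual hypotheses (positive weights, interior nodes, $d_{\mathrm{ex}}\geq1$) that is false, so the splitting argument you sketch cannot succeed.

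For a counterexample, take nodes with barycentric coordinates $(1-2\epsilon,\epsilon,\epsilon)$ and its two permutations, each with weight $|\tau|/3$. This rule is exact for linears, has positive weights and interior nodes. Place $z_j$ at (or arbitrarily near) the first vertex, so $\phi_1^{(\tau)}(z_j)=1$. By \eqref{eq:expression_T_HC_proj}, the entries are $4\phi_1^{(\tau)}(y_q)-1$, namely $3-8\epsilon$ once and $4\epsilon-1$ twice. Their absolute sum is $5-16\epsilon$, which exceeds $7/2$ for small $\epsilon$. Your fallback therefore proves the estimate only under $d_{\mathrm{ex}}\geq2$, which the lemma does not assume, even though VR satisfies it.

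No extra exactness is needed once the inner sum is evaluated exactly. It then suffices to bound the weight factor by $7$, which follows from $|4\phi_i^{(\tau)}-1|\leq4\phi_i^{(\tau)}+1$ and the linear moment \eqref{eq:quad_linear_exact}:
\[
\frac{3}{|\tau|}\sum_{i=1}^3\phi_i^{(\tau)}(z_j)\Bigl(\frac{4|\tau|}{3}+|\tau|\Bigr)=7 .
\]
This gives $2\cdot7\cdot\sqrt2\,r_h=14\sqrt2\,r_h$, which is the paper's argument. Your Cauchy--Schwarz estimate is still worth keeping: combined with the exact inner sum, it yields $6\sqrt2\,r_h$ when $d_{\mathrm{ex}}\geq2$.
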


\begin{proof}
  We consider indices $j\in\mathcal{I}$ throughout, since
  these are the only ones entering the norm $\|\mathsf{Z}_{\mathcal{I}}\|_\infty$.
  To bound it, we distinguish between diagonal and non-diagonal terms in the row-sum of $\mathsf{I}_{N_C}-\mathsf{R}^{(C)}_{\mathrm{I}}$. For diagonal terms, using the unit row sum of $\mathsf{T}_Q^{C,\mathrm{I}}$ from Lemma~\ref{lem:rowsum_THC}, it follows that
\begin{equation}\label{eq:diag_identity}
    1 - \bigl(\mathsf{R}^{(C)}_{\mathrm{I}}\bigr)_{jj}
    = \sum_{q=1}^{N_Q}\bigl(\mathsf{T}_Q^{C,\mathrm{I}}\bigr)_{jq}
      - \sum_{q=1}^{N_Q}\bigl(\mathsf{T}_Q^{C,\mathrm{I}}\bigr)_{jq}
        \bigl(\mathsf{T}_C^Q\bigr)_{qj}
    = \sum_{q=1}^{N_Q}\bigl(\mathsf{T}_Q^{C,\mathrm{I}}\bigr)_{jq}
      \Bigl(1-\bigl(\mathsf{T}_C^Q\bigr)_{qj}\Bigr).
\end{equation}
Since $(\mathsf{T}_C^Q)_{qj}\in[0,1]$, the triangle inequality applied to~\eqref{eq:diag_identity} yields
\begin{equation}\label{eq:diag_bound_step3}
    \bigl|1-\bigl(\mathsf{R}^{(C)}_{\mathrm{I}}\bigr)_{jj}\bigr|
    \leq \sum_{q=1}^{N_Q}
         \bigl|\bigl(\mathsf{T}_Q^{C,\mathrm{I}}\bigr)_{jq}\bigr|
         \Bigl(1-\bigl(\mathsf{T}_C^Q\bigr)_{qj}\Bigr).
\end{equation}

For the off-diagonal terms, on the other hand, we have
\begin{align}\label{eq:offdiag_bound_step3}
    \sum_{j':j'\neq j}\bigl|\bigl(\mathsf{R}^{(C)}_{\mathrm{I}}\bigr)_{jj'}\bigr|
    &= \sum_{j':j'\neq j}
       \Biggl|\sum_{q=1}^{N_Q}\bigl(\mathsf{T}_Q^{C,\mathrm{I}}\bigr)_{jq}
              \bigl(\mathsf{T}_C^Q\bigr)_{qj'}\Biggr|
    \leq \sum_{j':j'\neq j}\sum_{q=1}^{N_Q}
         \bigl|\bigl(\mathsf{T}_Q^{C,\mathrm{I}}\bigr)_{jq}\bigr|
         \bigl(\mathsf{T}_C^Q\bigr)_{qj'} \notag\\
    &= \sum_{q=1}^{N_Q}
       \bigl|\bigl(\mathsf{T}_Q^{C,\mathrm{I}}\bigr)_{jq}\bigr|
       \underbrace{\sum_{j':j'\neq j}
       \bigl(\mathsf{T}_C^Q\bigr)_{qj'}}_{
       =\,1-(\mathsf{T}_C^Q)_{qj}}
    = \sum_{q=1}^{N_Q}
      \bigl|\bigl(\mathsf{T}_Q^{C,\mathrm{I}}\bigr)_{jq}\bigr|
      \Bigl(1-\bigl(\mathsf{T}_C^Q\bigr)_{qj}\Bigr).
\end{align}

Adding \eqref{eq:diag_bound_step3} and \eqref{eq:offdiag_bound_step3}, we arrive at
\begin{align}\label{eq:norm_bound_split}
    \bigl|1-\bigl(\mathsf{R}^{(C)}_{\mathrm{I}}\bigr)_{jj}\bigr|
    +\sum_{j':j'\neq j}
     \bigl|\bigl(\mathsf{R}^{(C)}_{\mathrm{I}}\bigr)_{jj'}\bigr|
    &\leq 2
       \sum_{q=1}^{N_Q}
       \bigl|(\mathsf{T}_Q^{C,\mathrm{I}})_{jq}\bigr|\,\Bigl(1-\bigl(\mathsf{T}_C^Q\bigr)_{qj}\Bigr).
\end{align}

Then, recall from~\eqref{eq:sparse_I} that  $(\mathsf{T}_Q^{C,\mathrm{I}})_{jq}\neq0$ only if
$y_q$ lies in the element $\tau$ assigned to $z_j$, as in the proof of
Lemma~\ref{lem:rowsum_THC}. Therefore,
\begin{equation}\label{eq:l1_restriction}
    \sum_{q=1}^{N_Q}
    \bigl|\bigl(\mathsf{T}_Q^{C,\mathrm{I}}\bigr)_{jq}\bigr|
         \Bigl(1-\bigl(\mathsf{T}_C^Q\bigr)_{qj}\Bigr)
    = \sum_{q\,:\,y_q\in\tau}
      \bigl|\bigl(\mathsf{T}_Q^{C,\mathrm{I}}\bigr)_{jq}\bigr|
         \Bigl(1-\bigl(\mathsf{T}_C^Q\bigr)_{qj}\Bigr).
\end{equation}
To bound the term in parentheses above we note that, since $h < h_C$, for every $q\in\{1,\ldots,N_Q\}$ with
$y_q\in\tau\ni z_j$, it holds that
$\|y_q - z_j\|_\infty \leq h < h_C$,
so $y_q$ lies in the Cartesian cell
$\sigma\in\mathcal{G}$ having $z_j$ as a corner,
and formula \eqref{eq:psi_def} applies with $\alpha_q := |z_{j,1} - y_{q,1}|$ and
$\beta_q := |z_{j,2} - y_{q,2}|$, both in $[0,h]$. Expressing the bilinear weight
$\bigl(\mathsf{T}_C^Q\bigr)_{qj}=\psi_j(y_q)$ in terms of $\alpha_q$ and $\beta_q$, we get
\begin{equation}\label{eq:bilinear_exact}
    0\leq 1-\bigl(\mathsf{T}_C^Q\bigr)_{qj}
    = \frac{\alpha_q+\beta_q}{h_C}      
      - \frac{\alpha_q\beta_q}{h_C^2}
    \leq \frac{\|z_j-y_q\|_1}{h_C}\leq \sqrt2\frac{\|z_j-y_q\|_2}{h_C}\leq\sqrt2\frac{h}{h_C}.      
\end{equation}
The last inequality holds because both $z_j$ and $y_q$ lie in the same triangle $\tau$, so their Euclidean distance is bounded by the diameter of $\tau$, which in turn is bounded by the mesh size $h$. 

Using this bound in~\eqref{eq:l1_restriction} we arrive at
\begin{equation}\label{eq:sum_bound_sqrt2}
\sum_{q=1}^{N_Q}
    \bigl|\bigl(\mathsf{T}_Q^{C,\mathrm{I}}\bigr)_{jq}\bigr|
         \Bigl(1-\bigl(\mathsf{T}_C^Q\bigr)_{qj}\Bigr)\leq     \sqrt2\, r_h \sum_{q\,:\,y_q\in\tau}
      \bigl|\bigl(\mathsf{T}_Q^{C,\mathrm{I}}\bigr)_{jq}\bigr|.
\end{equation}

Applying the triangle inequality to the entrywise
expression~\eqref{eq:expression_T_HC_proj}, using $\omega_q>0$ together with
$\phi_i^{(\tau)}(z_j)\geq0$ and $|4\phi_i^{(\tau)}-1|\leq 4\phi_i^{(\tau)}+1$ on $\tau$, then
using~\eqref{eq:quad_linear_exact} (valid since
$d_{\mathrm{ex}}(p)\geq1$ and $\tau$ is straight-edged) and the
identity $\sum_{i=1}^3\phi_i^{(\tau)}(z_j)=1$,
\begin{equation}\label{eq:l1_bound_five}
    \sum_{q\,:\,y_q\in\tau}
    \bigl|\bigl(\mathsf{T}_Q^{C,\mathrm{I}}\bigr)_{jq}\bigr|
    \leq \sum_{i=1}^{3}\phi_i^{(\tau)}(z_j)\,\frac{3}{|\tau|}
      \sum_{q\,:\,y_q\in\tau}
      \Bigl(4\phi_i^{(\tau)}(y_q)+1\Bigr)
      \,\omega_q
    = 7 .
\end{equation}

Combining \eqref{eq:sum_bound_sqrt2} and~\eqref{eq:l1_bound_five}, and substituting
into~\eqref{eq:norm_bound_split}, whose right-hand side is now independent of
$j$, and taking the maximum over $j\in\mathcal{I}$, we finally arrive at
\begin{equation*}
    \bigl\|\mathsf{Z}_{\mathcal{I}}\bigr\|_\infty
    \leq 2\cdot7\cdot\sqrt{2}\,r_h
    = 14\sqrt{2}\,r_h.
\end{equation*}
The proof is complete.
\end{proof}

The following result establishes the invertibility of $\mathsf{Q}$ and hence of the preconditioner $\mathsf{P}$
for Approach~I.

\begin{theorem}
  \label{thm:approachI_invertibility}
Fix $h_C>0$, assume $\mathsf{S}^{(C)}$ is
invertible, and let the assumptions of Lemmas~\ref{lem:rowsum_THC}
and~\ref{lem:approachI_estimate} hold. Then there exists $h_0>0$ such that, for
every $h<h_0$, the invertibility
condition~\eqref{eq:invertibility_condition_T} of Lemma~\ref{lem:Q_invertibility} is
satisfied. Consequently
$\mathsf{Q}$ and the preconditioner
$\mathsf{P}$~\eqref{eq:preconditioner_split} are invertible.
\end{theorem}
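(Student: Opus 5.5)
The plan is to combine the quantitative estimate of Lemma~\ref{lem:approachI_estimate} with the sufficient condition~\eqref{eq:invertibility_condition_T} of Lemma~\ref{lem:Q_invertibility}. The key observation is that, with $h_C$ fixed, the Cartesian grid $\mathcal{G}$, the interior index set $\mathcal{I}$, and the matrix $\mathsf{S}^{(C)}$ do not depend on the triangulation $\mathcal{T}$. Therefore the constant
\[
  K := \bigl\|(\mathsf{S}^{(C)})^{-1}(\mathsf{S}^{(C)}-\mathsf{I}_{N_C})\bigr\|_\infty
\]
is a finite number independent of $h$. It is finite because $\mathsf{S}^{(C)}$ is invertible by assumption.

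If $K=0$, condition~\eqref{eq:invertibility_condition_T} holds trivially for every $h$, and any $h_0$ (say $h_0=h_C$) works. Otherwise, I will set
\[
  h_0 := \min\Bigl\{h_C,\ \frac{h_C}{14\sqrt2\,K}\Bigr\}.
\]
For $h<h_0$ we have $h<h_C$, so Lemma~\ref{lem:approachI_estimate} applies and gives $\|\mathsf{Z}_{\mathcal I}\|_\infty\le 14\sqrt2\,h/h_C$. Hence
\[
  K\,\|\mathsf{Z}_{\mathcal I}\|_\infty \le 14\sqrt2\,K\,\frac{h}{h_C} < 1,
\]
which is exactly~\eqref{eq:invertibility_condition_T}. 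Lemma~\ref{lem:Q_invertibility} then yields the invertibility of $\mathsf{Q}$, and Lemma~\ref{lem:well_poss_P} transfers this to $\mathsf{P}$ through the determinant identity $\det\mathsf P=\det\mathsf Q$.

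The only delicate point is justifying that $K$ is genuinely independent of $h$. The sparsifying preconditioner $\mathsf{S}^{(C)}$ is built solely from the Cartesian discretization: its ingredients are $\mathsf{V}^{(C)}$, $\mathsf{M}^{(C)}$, the stencil matrix $\mathsf{A}$, and the filtered matrix $\mathsf{C}^{(C)}$, none of which involve the mesh. The structural assumption of Remark~\ref{rmk:exterior_columns}, used inside Lemma~\ref{lem:Q_invertibility}, is likewise a property of $\mathsf{S}^{(C)}$ alone.

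Refining $h$ changes only $\mathsf{T}_C^Q$ and $\mathsf{T}_Q^{C,\mathrm I}$, and hence $\mathsf Z$. The dependence of $\mathsf Z$ on $h$ is fully controlled by the uniform bound of Lemma~\ref{lem:approachI_estimate}. That bound requires only straight-edged elements, positive weights, interior nodes, and $d_{\mathrm{ex}}(p)\ge1$, all of which are assumed for every mesh in the family. With these points checked, the argument is a direct chaining of the earlier lemmas.
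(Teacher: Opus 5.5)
Your proposal is correct and follows the paper's proof. Both arguments combine the bound $\|\mathsf{Z}_{\mathcal{I}}\|_\infty\le 14\sqrt2\,r_h$ of Lemma~\ref{lem:approachI_estimate} with the $h$-independence of $\|(\mathsf{S}^{(C)})^{-1}(\mathsf{S}^{(C)}-\mathsf{I}_{N_C})\|_\infty$ to verify~\eqref{eq:invertibility_condition_T}, then conclude via Lemmas~\ref{lem:Q_invertibility} and~\ref{lem:well_poss_P}; your version is slightly more explicit, choosing $h_0=\min\{h_C,\,h_C/(14\sqrt2\,K)\}$ to also enforce the hypothesis $h<h_C$ of Lemma~\ref{lem:approachI_estimate} and treating the degenerate case $K=0$ separately.
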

\begin{proof}
By Lemma~\ref{lem:approachI_estimate}, $\|\mathsf{Z}_{\mathcal{I}}\|_\infty\leq
14\sqrt2\,r_h\to0$ as $r_h\to 0$, so, with $h_C$ fixed, this quantity tends to zero as
$h\to0$. Since $\mathsf{S}^{(C)}$ is fixed, the product
$\|(\mathsf{S}^{(C)})^{-1}(\mathsf{S}^{(C)}-\mathsf{I}_{N_C})\|_\infty$ appearing in
Lemma~\ref{lem:Q_invertibility} is likewise a fixed positive constant.
Therefore, for $h$ small enough,
$\|(\mathsf{S}^{(C)})^{-1}(\mathsf{S}^{(C)}-\mathsf{I}_{N_C})\|_\infty\|\mathsf{Z}_{\mathcal{I}}\|_\infty<1$
and condition~\eqref{eq:invertibility_condition_T} holds. This
yields invertibility of $\mathsf{Q}$ and hence, by Lemma~\ref{lem:well_poss_P}, of the preconditioner~$\mathsf{P}$.
\end{proof}

\begin{remark}[Curved vs.\ polygonal domains]\label{rmk:approachI_curved}
The estimate of Lemma~\ref{lem:approachI_estimate} assumes the setting of a straight-edged tessellation, hence a polygonal domain. More generally,
on a curved element, let $F:\hat\tau\to\tau$ be the curvilinear map from the fixed reference triangle $\hat\tau$, and let $A:\hat\tau\to\tau$ be the affine map that agrees with $F$ at the three vertices, whose image is the straight triangle used in the polygonal analysis. For a shape-regular curvilinear mesh resolving a boundary of uniformly bounded curvature, standard element-map estimates (e.g.~\cite[Ch.~4]{Sauter2010}) give $\|F-A\|_{L^\infty(\hat\tau)}=\mathcal O(h^2)$, so the quadrature nodes $F(\hat y_q)$ are displaced by $\mathcal  O(h^2)$ from their straight-triangle positions $A(\hat y_q)$, while the element area is perturbed to $|\tau|\to|\tau|\,(1+\mathcal O(h))$. Since the $\mathcal{P}_1$ basis functions are Lipschitz with constant $\mathcal  O(h^{-1})$, both effects enter the entries of $\mathsf{T}_Q^{C,\mathrm{I}}$ as an $\mathcal O(h)$ relative perturbation, and the bound~\eqref{eq:IminusTc_exact} gets an additional term,
\begin{equation}\label{eq:IminusTc_curved}
    \bigl\|\mathsf{Z}_{\mathcal{I}}\bigr\|_\infty
    \leq 14\sqrt2\,r_h + \mathcal O(h),
\end{equation}
which vanishes at least linearly as $h\to0$ and hence leaves the first-order rate unchanged. Since the right-hand side of~\eqref{eq:IminusTc_curved} still tends to zero as $h\to0$,  it can be used, in place of~\eqref{eq:IminusTc_exact}, to achieve the smallness condition on $h$ in Theorem~\ref{thm:approachI_invertibility}, thereby establishing the invertibility of $\mathsf{Q}$ and of the preconditioner $\mathsf{P}$ on curved meshes for all $h$ small enough. Numerical evidence of this linear decay is presented in Appendix~\ref{app:precond_validation}, where Table~\ref{tab:invertibility_certificate} reports $\|\mathsf{Z}_{\mathcal{I}}\|_\infty$ for the unit disk---a curved domain, discretized with curved elements---decreasing linearly in the mesh ratio $r_h$ (with a fixed $h_C$), in agreement with~\eqref{eq:IminusTc_curved}.
\end{remark}

\subsection{Analysis of Approach~II}\label{subsec:analysis_approach_II}

The analysis of Approach~II is structurally different. In place of the perturbation argument above, it exploits the Gram structure
$\mathsf{D}\,\mathsf{R}^{(C)}_{\mathrm{II}}=(\mathsf{T}_C^Q)^\top\mathsf{T}_C^Q$, which renders $\mathsf{R}^{(C)}_{\mathrm{II}}$ self-adjoint and positive semidefinite in a suitable inner product for every mesh size. Together with the condition~\eqref{eq:diss_on_range} on $\mathsf{S}^{(C)}$, which we assume, motivated by the energy balance obeyed by the continuous solution operator that $\mathsf{S}^{(C)}$ approximates (Section~\ref{sec:dissipativity_physical}), this yields the invertibility of $\mathsf{Q}$ in~\eqref{eq:def_Q}, and hence of the preconditioner~$\mathsf{P}$.

\begin{lemma}
  \label{lem:approachII_gram}
For the renormalized transpose $\mathsf{T}_Q^{C,\mathrm{II}}=\mathsf{D}^{-1}(\mathsf{T}_C^Q)^\top$ in~\eqref{eq:T_H^C_def}, consider the Hermitian inner product on $\mathbb{C}^{N_C}$ induced by the positive diagonal matrix $\mathsf{D}$ in~\eqref{eq:D_def},
\begin{equation}\label{eq:D_inner_product}
  \langle\mathbf{x},\mathbf{y}\rangle_{\mathsf{D}}=\mathbf{x}^{*}\mathsf{D}\,\mathbf{y},
  \qquad \mathbf{x},\mathbf{y}\in\mathbb{C}^{N_C}.
\end{equation}
The round-trip operator $\mathsf{R}^{(C)}_{\mathrm{II}}=\mathsf{T}_Q^{C,\mathrm{II}}\mathsf{T}_C^Q$ in~\eqref{eq:round_trip_def} is self-adjoint and positive semidefinite with respect to $\langle\cdot,\cdot\rangle_{\mathsf{D}}$, and it is a contraction:
\begin{equation}\label{eq:roundtrip_contraction}
  \bigl\|\mathsf{R}^{(C)}_{\mathrm{II}}\bigr\|_{\mathsf{D}}\leq1 ,
\end{equation}
where $\|\cdot\|_{\mathsf{D}}$ denotes the operator norm induced by
$\langle\cdot,\cdot\rangle_{\mathsf{D}}$.
\end{lemma}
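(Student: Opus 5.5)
The plan is to exploit the Gram identity $\mathsf{D}\,\mathsf{R}^{(C)}_{\mathrm{II}}=(\mathsf{T}_C^Q)^\top\mathsf{T}_C^Q$. It follows immediately from $\mathsf{R}^{(C)}_{\mathrm{II}}=\mathsf{D}^{-1}(\mathsf{T}_C^Q)^\top\mathsf{T}_C^Q$ and the fact that $\mathsf{D}$ is diagonal with positive entries. Writing $\mathsf{T}:=\mathsf{T}_C^Q$, which is real, we get for all $\mathbf{x},\mathbf{y}\in\mathbb{C}^{N_C}$
\[
\langle \mathbf{x},\mathsf{R}^{(C)}_{\mathrm{II}}\mathbf{y}\rangle_{\mathsf{D}}=\mathbf{x}^*\mathsf{T}^\top\mathsf{T}\mathbf{y}=(\mathsf{T}\mathbf{x})^*(\mathsf{T}\mathbf{y}).
\]
This expression is Hermitian-symmetric in $(\mathbf{x},\mathbf{y})$, which gives self-adjointness in $\langle\cdot,\cdot\rangle_{\mathsf{D}}$. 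Setting $\mathbf{y}=\mathbf{x}$ gives $\langle\mathbf{x},\mathsf{R}^{(C)}_{\mathrm{II}}\mathbf{x}\rangle_{\mathsf{D}}=\|\mathsf{T}\mathbf{x}\|_2^2\ge0$, which is positive semidefiniteness.

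For the contraction bound, note that $\mathsf{R}^{(C)}_{\mathrm{II}}$ is $\mathsf{D}$-self-adjoint and positive semidefinite. Its $\mathsf{D}$-operator norm therefore equals its largest eigenvalue, namely $\sup_{\mathbf{x}\neq0}\|\mathsf{T}\mathbf{x}\|_2^2/\langle\mathbf{x},\mathbf{x}\rangle_{\mathsf{D}}$. It therefore suffices to prove
\[
\|\mathsf{T}\mathbf{x}\|_2^2\le\sum_j \mathsf{D}_{jj}|x_j|^2 .
\]

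Each row of $\mathsf{T}$ consists of nonnegative bilinear weights summing to one. By Cauchy--Schwarz (equivalently, Jensen's inequality for the convex combination),
\[
|(\mathsf{T}\mathbf{x})_q|^2=\Bigl|\sum_j \mathsf{T}_{qj}x_j\Bigr|^2\le\Bigl(\sum_j\mathsf{T}_{qj}\Bigr)\Bigl(\sum_j \mathsf{T}_{qj}|x_j|^2\Bigr)=\sum_j\mathsf{T}_{qj}|x_j|^2.
\]
Summing over $q$ and exchanging the order of summation gives $\sum_j\bigl(\sum_q\mathsf{T}_{qj}\bigr)|x_j|^2$. At pixelation nodes the inner column sum equals $\mathsf{D}_{jj}$. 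At the remaining nodes it is $0\le1=\mathsf{D}_{jj}$. This yields the desired bound, hence $\|\mathsf{R}^{(C)}_{\mathrm{II}}\|_{\mathsf{D}}\le1$.

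The only delicate point is handling the non-pixelation nodes, where $\mathsf{D}_{jj}$ was set arbitrarily to $1$. The argument above shows this choice is harmless, since those columns of $\mathsf{T}$ vanish and the inequality only needs $\sum_q\mathsf{T}_{qj}\le \mathsf{D}_{jj}$. The only other step to check is that $\mathsf{D}$ is positive, so that $\langle\cdot,\cdot\rangle_{\mathsf{D}}$ is a genuine inner product. This holds because column sums at pixelation nodes are positive by definition of the pixelation. Equivalently, the bound can be phrased as the spectral estimate $\mathsf{T}^\top\mathsf{T}\preceq\mathsf{D}$, i.e. a Schur-test bound using the unit row sums of $\mathsf{T}$ and its column sums $\mathsf{D}_{jj}$.
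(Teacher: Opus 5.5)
Your proof is correct. The self-adjointness and semidefiniteness parts match the paper's: both read them off the Gram identity $\mathsf{D}\,\mathsf{R}^{(C)}_{\mathrm{II}}=(\mathsf{T}_C^Q)^\top\mathsf{T}_C^Q$. Your contraction step, however, takes a different route.

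The paper never works with the Rayleigh quotient. It observes that $\mathsf{R}^{(C)}_{\mathrm{II}}$ has nonnegative entries and that $\mathsf{R}^{(C)}_{\mathrm{II}}\mathbf{1}_{N_C}=\mathsf{D}^{-1}(\mathsf{T}_C^Q)^\top\mathbf{1}_{N_Q}$ has entries $1$ at pixelation nodes and $0$ elsewhere, so $\|\mathsf{R}^{(C)}_{\mathrm{II}}\|_\infty=1$. Hence the spectral radius is at most $1$. Since the eigenvalues are real and nonnegative, the largest one, which equals the $\mathsf{D}$-operator norm, is at most $1$.

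You instead bound the Rayleigh quotient directly by a weighted Cauchy--Schwarz (Jensen, or Schur-test) estimate. This amounts to proving the operator inequality $(\mathsf{T}_C^Q)^\top\mathsf{T}_C^Q\preceq\mathsf{D}$.

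Both arguments use the same ingredients: nonnegative bilinear weights with unit row sums, and $\mathsf{D}$ dominating the column sums of $\mathsf{T}_C^Q$. The paper's version is a one-line computation once the eigenvalue facts are in place, and it exploits the entrywise structure of $\mathsf{R}^{(C)}_{\mathrm{II}}$ itself. Yours stays at the level of quadratic forms and needs no appeal to the relation between spectral radius and the $\infty$-norm. It also yields a slightly more informative statement, $\|\mathsf{T}_C^Q\mathbf{x}\|_2\le\|\mathbf{x}\|_{\mathsf{D}}$: the prolongation itself is a contraction from $(\mathbb{C}^{N_C},\|\cdot\|_{\mathsf{D}})$ into $(\mathbb{C}^{N_Q},\|\cdot\|_2)$, and the bound on the round trip follows from that.
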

\begin{proof}
Since $\mathsf{T}_Q^{C,\mathrm{II}}=\mathsf{D}^{-1}(\mathsf{T}_C^Q)^{\top}$, we have $\mathsf{D}\,\mathsf{R}^{(C)}_{\mathrm{II}} = (\mathsf{T}_C^Q)^{\top}\mathsf{T}_C^Q$, which is real symmetric. Hence, for all $\mathbf{x},\mathbf{y}\in\mathbb{C}^{N_C}$,
$\langle\mathsf{R}^{(C)}_{\mathrm{II}}\mathbf{x},\mathbf{y}\rangle_{\mathsf{D}}
  =\mathbf{x}^{*}(\mathsf{T}_C^Q)^{\top}\mathsf{T}_C^Q\,\mathbf{y} =\langle\mathbf{x},\mathsf{R}^{(C)}_{\mathrm{II}}\mathbf{y}\rangle_{\mathsf{D}},
$
so $\mathsf{R}^{(C)}_{\mathrm{II}}$ is self-adjoint with respect to $\langle\cdot,\cdot\rangle_{\mathsf{D}}$. Moreover, for every $\mathbf{x}\in\mathbb{C}^{N_C}$,
$\langle\mathsf{R}^{(C)}_{\mathrm{II}}\mathbf{x},\mathbf{x}\rangle_{\mathsf{D}}   =\mathbf{x}^{*}(\mathsf{T}_C^Q)^{\top}\mathsf{T}_C^Q\mathbf{x}
  =\bigl\|\mathsf{T}_C^Q\mathbf{x}\bigr\|_2^{2}\geq0,$ so $\mathsf{R}^{(C)}_{\mathrm{II}}$ is positive semidefinite with respect to $\langle\cdot,\cdot\rangle_{\mathsf{D}}$. 
  
On the other hand, in view of the positivity of the diagonal matrix $\mathsf D$, the eigenvalues $\lambda_i$, $i\in\{1,\ldots,N_C\}$, of $\mathsf R^{(C)}_{\rm II}$ are real and non-negative. To bound them from above, note that all entries of
$\mathsf{R}^{(C)}_{\mathrm{II}}=\mathsf{D}^{-1}(\mathsf{T}_C^Q)^{\top}\mathsf{T}_C^Q$ are
non-negative, and that its row sums are
\[
  \bigl(\mathsf{R}^{(C)}_{\mathrm{II}}\mathbf{1}_{N_C}\bigr)_j
  =\bigl(\mathsf{D}^{-1}(\mathsf{T}_C^Q)^{\top}\mathbf{1}_{N_Q}\bigr)_j
  =\frac{1}{(\mathsf{D})_{jj}}\sum_{q=1}^{N_Q}(\mathsf{T}_C^Q)_{qj}
  =\begin{cases}
     1, & \text{$j$ a pixelation node},\\
     0, & \text{otherwise},
   \end{cases}
\]
where the first equality uses the unit row sums of $\mathsf{T}_C^Q$ and the last one the
definition~\eqref{eq:D_def} of $\mathsf{D}$.
Therefore
$\|\mathsf{R}^{(C)}_{\mathrm{II}}\|_{\infty}=\|\mathsf{R}^{(C)}_{\mathrm{II}}\mathbf{1}_{N_C}\|_{\infty}=1$, and since the spectral radius is bounded by any
induced norm, $0\leq\lambda_i\leq1$ for all $i\in\{1,\ldots,N_C\}$. Finally, since $\mathsf{R}^{(C)}_{\rm II}$ is positive semidefinite and self-adjoint with respect to the $\mathsf{D}$-inner product $\langle\cdot,\cdot\rangle_{\mathsf{D}}$, its operator norm induced by $\langle\cdot,\cdot\rangle_{\mathsf{D}}$ equals its largest eigenvalue, so
$\bigl\|\mathsf{R}^{(C)}_{\mathrm{II}}\bigr\|_{\mathsf{D}}=\max_{i\in\{1,\ldots,N_C\}}\lambda_i\leq1$, which is~\eqref{eq:roundtrip_contraction}.
\end{proof}

It remains to combine the Gram structure and the contraction bound of Lemma~\ref{lem:approachII_gram} with a condition on the sparsifying preconditioner $\mathsf{S}^{(C)}$ excluding a specific behavior of its $\mathsf{D}$-form. Together they yield the main invertibility result of this section, which, unlike Theorem~\ref{thm:approachI_invertibility}, carries no smallness requirement on $h$.

\begin{theorem}
  \label{thm:P_invertible_unconditional}
Fix $h_C>0$. Let $\langle\cdot,\cdot\rangle_{\mathsf{D}}$ be the inner product of Lemma~\ref{lem:approachII_gram}, with respect to which the round trip $\mathsf{R}^{(C)}_{\mathrm{II}}$ is self-adjoint, positive semidefinite and a contraction. If the form of the sparsifying preconditioner $\mathsf{S}^{(C)}$ with respect to $\langle\cdot,\cdot\rangle_{\mathsf{D}}$ avoids the non-positive real axis on the range of the round trip, i.e.\
\begin{equation}\label{eq:diss_on_range}
  \langle\mathsf{S}^{(C)}\mathbf{w},\mathbf{w}\rangle_{\mathsf{D}}\notin(-\infty,0]
  \quad\text{for all nonzero }\mathbf{w}\in\operatorname{range}\mathsf{R}^{(C)}_{\mathrm{II}},
\end{equation}
then, for \emph{every} triangulation mesh size $h>0$, the preconditioner $\mathsf{P}$ in~\eqref{eq:preconditioner_split}, associated to Approach~II, is invertible.
\end{theorem}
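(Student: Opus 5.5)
By Lemma~\ref{lem:well_poss_P}, it suffices to show that $\mathsf{Q}=\mathsf{I}_{N_C}+(\mathsf{S}^{(C)}-\mathsf{I}_{N_C})\mathsf{R}$ is invertible, where we abbreviate $\mathsf{R}:=\mathsf{R}^{(C)}_{\mathrm{II}}$. We argue by contradiction. Suppose $\mathsf{Q}\mathbf{x}=\mathbf{0}$ for some nonzero $\mathbf{x}\in\mathbb{C}^{N_C}$. This equation rearranges to
\[
\mathbf{x}-\mathsf{R}\mathbf{x}+\mathsf{S}^{(C)}\mathsf{R}\mathbf{x}=\mathbf{0}.
\]
Set $\mathbf{w}:=\mathsf{R}\mathbf{x}\in\operatorname{range}\mathsf{R}$.

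First we show that $\mathbf{w}\neq\mathbf{0}$. If $\mathbf{w}=\mathbf{0}$, the equation above reduces to $\mathbf{x}=\mathbf{0}$, which contradicts the choice of $\mathbf{x}$.

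Next we take the $\mathsf{D}$-inner product of the equation with $\mathbf{w}$ in the second slot. This gives
\[
\langle\mathsf{S}^{(C)}\mathbf{w},\mathbf{w}\rangle_{\mathsf{D}}
=-\langle\mathbf{x},\mathbf{w}\rangle_{\mathsf{D}}+\langle\mathbf{w},\mathbf{w}\rangle_{\mathsf{D}}
=-\langle\mathbf{x},\mathsf{R}\mathbf{x}\rangle_{\mathsf{D}}+\|\mathsf{R}\mathbf{x}\|_{\mathsf{D}}^2 .
\]
By Lemma~\ref{lem:approachII_gram}, $\mathsf{R}$ is self-adjoint and positive semidefinite with respect to $\langle\cdot,\cdot\rangle_{\mathsf{D}}$. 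It is therefore $\mathsf{D}$-unitarily diagonalizable, with real eigenvalues $\lambda_i\in[0,1]$ and a $\mathsf{D}$-orthonormal eigenbasis. Expanding $\mathbf{x}=\sum_i c_i\mathbf{e}_i$ in that basis, the right-hand side becomes
\[
\sum_i(\lambda_i^2-\lambda_i)|c_i|^2 .
\]
Every term is real and satisfies $\lambda_i^2-\lambda_i\le0$, because $0\le\lambda_i\le1$. Hence $\langle\mathsf{S}^{(C)}\mathbf{w},\mathbf{w}\rangle_{\mathsf{D}}\in(-\infty,0]$ for the nonzero vector $\mathbf{w}\in\operatorname{range}\mathsf{R}$. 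This contradicts hypothesis~\eqref{eq:diss_on_range}.

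It follows that $\mathsf{Q}$ has trivial kernel and is invertible, and therefore so is $\mathsf{P}$. The argument uses only Lemma~\ref{lem:approachII_gram}, which holds for every mesh; this is why no condition on $h$ appears. The main obstacle is identifying the right test vector and sign bookkeeping. Testing with $\mathbf{w}=\mathsf{R}\mathbf{x}$, rather than with $\mathbf{x}$, is what turns $\mathsf{R}-\mathsf{R}^2$ into a nonnegative quadratic form.
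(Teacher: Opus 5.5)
Your proof is correct. It is a streamlined version of the paper's argument rather than a genuinely different route.

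The paper's proof runs as follows:
\begin{itemize}
\item It introduces the $\mathsf{D}$-self-adjoint square root $\mathsf{F}=(\mathsf{R}^{(C)}_{\mathrm{II}})^{1/2}$.
\item It uses $\sigma(\mathsf{AB})=\sigma(\mathsf{BA})$ to reduce invertibility to $1\notin\sigma\bigl(\mathsf{F}(\mathsf{I}_{N_C}-\mathsf{S}^{(C)})\mathsf{F}\bigr)$.
\item It tests a putative eigenvector $\mathbf{v}$ against itself, with $\mathbf{w}=\mathsf{F}\mathbf{v}$.
\item It gets the sign from the contraction bound $\langle\mathbf{w},\mathbf{w}\rangle_{\mathsf{D}}\le\langle\mathbf{v},\mathbf{v}\rangle_{\mathsf{D}}$.
\end{itemize}
You instead test the kernel equation $\mathsf{Q}\mathbf{x}=\mathbf{0}$ directly with $\mathbf{w}=\mathsf{R}\mathbf{x}$, and get the sign from $\lambda_i^2\le\lambda_i$.

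The two are literally the same computation. From $\mathsf{Q}\mathbf{x}=\mathbf{0}$, the vector $\mathbf{v}:=\mathsf{F}\mathbf{x}$ is the paper's eigenvector, and its $\mathbf{w}=\mathsf{F}\mathbf{v}$ is your $\mathsf{R}\mathbf{x}$. The paper's identity $\langle\mathsf{S}^{(C)}\mathbf{w},\mathbf{w}\rangle_{\mathsf{D}}=\langle\mathbf{w},\mathbf{w}\rangle_{\mathsf{D}}-\langle\mathbf{v},\mathbf{v}\rangle_{\mathsf{D}}$ is exactly your $\|\mathsf{R}\mathbf{x}\|_{\mathsf{D}}^2-\langle\mathbf{x},\mathsf{R}\mathbf{x}\rangle_{\mathsf{D}}$.

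Your version dispenses with several things the paper needs:
\begin{itemize}
\item the matrix square root and the spectral identity;
\item the implicitly used fact $\operatorname{range}\mathsf{F}=\operatorname{range}\mathsf{R}^{(C)}_{\mathrm{II}}$, since $\mathbf{w}=\mathsf{R}\mathbf{x}$ lies in the range by construction;
\item the separate real/imaginary-part bookkeeping, since your right-hand side $\sum_i(\lambda_i^2-\lambda_i)|c_i|^2$ is manifestly real and non-positive.
\end{itemize}
The degenerate case $\mathbf{w}=\mathbf{0}$ also closes in one line from the kernel equation. For invertibility alone, your version is the cleaner presentation.
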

\begin{proof}
By Lemma~\ref{lem:well_poss_P} it suffices to show that $\mathsf{Q}$ in~\eqref{eq:def_Q} is invertible, with $\mathsf{R}^{(C)}=\mathsf{R}^{(C)}_{\mathrm{II}}$. Writing $\mathsf{Q}=\mathsf{I}_{N_C}-(\mathsf{I}_{N_C}-\mathsf{S}^{(C)})\mathsf{R}^{(C)}_{\mathrm{II}}$, this amounts to $1\notin\sigma\bigl((\mathsf{I}_{N_C}-\mathsf{S}^{(C)})\mathsf{R}^{(C)}_{\mathrm{II}}\bigr)$. As $\mathsf{R}^{(C)}_{\mathrm{II}}$ is self-adjoint and positive semidefinite with respect to $\langle\cdot,\cdot\rangle_{\mathsf{D}}$ (Lemma~\ref{lem:approachII_gram}), it admits a square root $\mathsf{F}=(\mathsf{R}^{(C)}_{\mathrm{II}})^{1/2}$ that is itself self-adjoint and positive semidefinite with respect to $\langle\cdot,\cdot\rangle_{\mathsf{D}}$, with $\mathsf{R}^{(C)}_{\mathrm{II}}=\mathsf{F}^{2}$. Using the spectrum identity $\sigma(\mathsf{AB})=\sigma(\mathsf{BA})$, which holds for any two square matrices $\mathsf{A}$ and $\mathsf{B}$, we have
\begin{equation}
    \sigma\bigl((\mathsf{I}_{N_C}-\mathsf{S}^{(C)})\mathsf{F}^{2}\bigr)
    = \sigma\bigl(\mathsf{F}(\mathsf{I}_{N_C}-\mathsf{S}^{(C)})\mathsf{F}\bigr),
\end{equation}
so it suffices to show that $1\notin\sigma(\mathsf{M})$, where $\mathsf{M}:=\mathsf{F}(\mathsf{I}_{N_C}-\mathsf{S}^{(C)})\mathsf{F}$. Suppose, for contradiction, that $\mathsf{M}\mathbf{v}=\mathbf{v}$ for some $\mathbf{v}\neq\mathbf{0}$, and set $\mathbf{w}:=\mathsf{F}\mathbf{v}\in\operatorname{range}\mathsf{F}=\operatorname{range}\mathsf{R}^{(C)}_{\mathrm{II}}$. Since $\mathsf{F}$ is $\langle\cdot,\cdot\rangle_{\mathsf{D}}$-self-adjoint and $\mathsf{M}\mathbf{v}=\mathbf{v}$,
\begin{equation}\label{eq:Qdiss_key}
    \langle\mathbf{v},\mathbf{v}\rangle_{\mathsf{D}}
    =\langle\mathsf{M}\mathbf{v},\mathbf{v}\rangle_{\mathsf{D}}
    =\bigl\langle(\mathsf{I}_{N_C}-\mathsf{S}^{(C)})\mathbf{w},\mathbf{w}\bigr\rangle_{\mathsf{D}}
    =\langle\mathbf{w},\mathbf{w}\rangle_{\mathsf{D}}-\langle\mathsf{S}^{(C)}\mathbf{w},\mathbf{w}\rangle_{\mathsf{D}}.
\end{equation}
Because $\mathsf{F}$ may be singular, we distinguish two cases. If $\mathbf{w}=\mathsf{F}\mathbf{v}=\mathbf{0}$, then $\mathbf{v}$ lies in the kernel of $\mathsf{F}$ and the right-hand side of~\eqref{eq:Qdiss_key} vanishes. Taking real parts then forces $\langle\mathbf{v},\mathbf{v}\rangle_{\mathsf{D}}=0$, hence $\mathbf{v}=\mathbf{0}$, a contradiction. If instead $\mathbf{w}\neq\mathbf{0}$, we read off separately the imaginary and the real part of~\eqref{eq:Qdiss_key}. Since $\langle\mathbf{v},\mathbf{v}\rangle_{\mathsf{D}}$ and $\langle\mathbf{w},\mathbf{w}\rangle_{\mathsf{D}}$ are both real, the imaginary part gives $\imag\langle\mathsf{S}^{(C)}\mathbf{w},\mathbf{w}\rangle_{\mathsf{D}}=0$, so that $\langle\mathsf{S}^{(C)}\mathbf{w},\mathbf{w}\rangle_{\mathsf{D}}$ is real, while the real part gives
\begin{equation}\label{eq:Qdiss_real}
    \langle\mathsf{S}^{(C)}\mathbf{w},\mathbf{w}\rangle_{\mathsf{D}}
    =\langle\mathbf{w},\mathbf{w}\rangle_{\mathsf{D}}-\langle\mathbf{v},\mathbf{v}\rangle_{\mathsf{D}} .
\end{equation}
The right-hand side of~\eqref{eq:Qdiss_real} is non-positive. Indeed, $\mathsf{F}$ is $\langle\cdot,\cdot\rangle_{\mathsf{D}}$-self-adjoint with $\mathsf{F}^{2}=\mathsf{R}^{(C)}_{\mathrm{II}}$, so $\mathbf{w}=\mathsf{F}\mathbf{v}$ satisfies $\langle\mathbf{w},\mathbf{w}\rangle_{\mathsf{D}}=\langle\mathsf{R}^{(C)}_{\mathrm{II}}\mathbf{v},\mathbf{v}\rangle_{\mathsf{D}}$, and the Cauchy--Schwarz inequality together with the contraction bound~\eqref{eq:roundtrip_contraction} of Lemma~\ref{lem:approachII_gram} yields
    $\langle\mathbf{w},\mathbf{w}\rangle_{\mathsf{D}} =\langle\mathsf{R}^{(C)}_{\mathrm{II}}\mathbf{v},\mathbf{v}\rangle_{\mathsf{D}}\leq\bigl\|\mathsf{R}^{(C)}_{\mathrm{II}}\bigr\|_{\mathsf{D}}\,\langle\mathbf{v},\mathbf{v}\rangle_{\mathsf{D}}\leq\langle\mathbf{v},\mathbf{v}\rangle_{\mathsf{D}}$.
Hence $\langle\mathsf{S}^{(C)}\mathbf{w},\mathbf{w}\rangle_{\mathsf{D}}$ is real and non-positive, that is, it lies in $(-\infty,0]$, which contradicts~\eqref{eq:diss_on_range} since $\mathbf{w}\in\operatorname{range}\mathsf{R}^{(C)}_{\mathrm{II}}$ and $\mathbf{w}\neq\mathbf{0}$. In either case we reach a contradiction, so $1\notin\sigma(\mathsf{M})$, and thus $\mathsf{Q}$, and therefore $\mathsf{P}$, is invertible.
\end{proof}

\subsubsection{Energy balance behind the hypothesis of Theorem~\ref{thm:P_invertible_unconditional}}\label{sec:dissipativity_physical}
This section connects the hypothesis~\eqref{eq:diss_on_range} of Theorem~\ref{thm:P_invertible_unconditional} to the continuous scattering problem~\eqref{eq:bvp}, thereby identifying conditions on the contrast $m$ under which~\eqref{eq:diss_on_range} is to be expected. The link is made through the imaginary-part condition
\begin{equation}\label{eq:diss_imag_part}
  \imag\langle\mathsf{S}^{(C)}\mathbf{w},\mathbf{w}\rangle_{\mathsf{D}}\neq0
  \quad\text{for all nonzero }\mathbf{w}\in\operatorname{range}\mathsf{R}^{(C)}_{\mathrm{II}},
\end{equation}
which is more restrictive than, and hence sufficient for, the condition~\eqref{eq:diss_on_range} actually assumed in Theorem~\ref{thm:P_invertible_unconditional}.

Expressing the Lippmann--Schwinger equation~\eqref{eq:Lippmann-Schwinger_small} in operator form as
\begin{equation}\label{eq:LS_operator_form}
  (\mathcal{I}+k^2\mathcal{V}_\Omega\mathcal{M})\,u = u^{\inc}\ \ \text{in }D,
  \qquad \mathcal{M}g:=m\,g ,
\end{equation}
with $\mathcal{V}_\Omega$ the volume potential~\eqref{eq:vol_pot}, we let $\mathcal{S}:=(\mathcal{I}+k^2\mathcal{V}_\Omega\mathcal{M})^{-1}:L^2(D)\to L^2(D)$ be the solution operator, mapping an incident field $u^{\inc}$ to the total field $u=\mathcal{S}u^{\inc}$. It is $\mathcal{S}$ that $\mathsf{S}^{(C)}$ approximates, being by~\eqref{eq:LC_def} an approximate inverse of the Cartesian discretization $\mathsf{L}^{(C)}$~\eqref{eq:LS_eq_on_cartesian_points} of~\eqref{eq:LS_operator_form} on $D$. 
Moreover, the scattered field $u^s=u-u^{\inc}\in H^2_{\mathrm{loc}}(\R^2)$ satisfies $(\Delta+k^2n)\,u^s=k^2m\,u^{\inc}$ in $\R^2$ together with the radiation condition~\eqref{eq:sommerfeld_rc_def}, so $\mathcal{S}$ admits the form
\begin{equation}\label{eq:S_resolvent_identity}
  \mathcal{S}=\mathcal{I}+k^2\mathcal{R}\mathcal{M}
  \quad\text{on } L^2(D),
\end{equation}
where $\mathcal{R}$ denotes the outgoing resolvent of the full medium, taking a compactly supported $f\in L^2(\R^2)$ to the unique radiating solution $v\in H^2_{\mathrm{loc}}(\R^2)$ of $(\Delta+k^2n)v=f$ in $\R^2$~\cite{COLTON:2012,cakoni2016inverse}. Since $\mathcal{M}$ is the rightmost factor in~\eqref{eq:S_resolvent_identity}, and it annihilates any function vanishing on $\overline\Omega$, the difference $\mathcal{S}-\mathcal{I}$ ignores the values of its argument outside the scatterer. This is the counterpart of the vanishing exterior columns of $\mathsf{S}^{(C)}-\mathsf{I}_{N_C}$ in~\eqref{eq:S_minus_I}. 

For $u^{\inc}\in L^2(D)$, write $u:=\mathcal{S}u^{\inc}$ and let $u_\infty\in L^2(\mathbb{S}^1)$ be the far-field pattern of the scattered field $u^s=u-u^{\inc}$~\cite{COLTON:2012}. Then Green's first identity on a large ball, together with the radiation condition, yields the energy balance:
\begin{equation}\label{eq:dissipativity_identity}
  \imag\int_D\overline{m\,\mathcal{S}u^{\inc}}\;u^{\inc}\,\de x
  \;=\;\imag\int_\Omega\overline{m\,\mathcal{S}u^{\inc}}\;u^{\inc}\,\de x
  \;=\;\int_\Omega(\imag n)\,|u|^2\,\de x+\frac1k\,\|u_\infty\|^2_{L^2(\mathbb{S}^1)}
  \;\geq\;0.
\end{equation} Both terms on the right are non-negative, the first by the standing assumption $\imag n=n_2/k\geq0$ of Section~\ref{sec:problem_setup}, which reflects the $\e^{-i\omega t}$ time convention. They vanish simultaneously only if the medium fails to absorb ($(\imag n)\,|u|^2\equiv0$ in $\Omega$, as holds in particular for a non-absorbing medium $\imag n\equiv0$) and $u^{\inc}$ is a non-scattering incident field ($u_\infty\equiv0$). For incident fields solving the Helmholtz equation this degeneracy is precisely the interior transmission eigenvalue problem, and it can occur only at the (discrete) set of transmission eigenvalues~\cite{cakoni2016inverse}.

Two gaps separate the energy balance~\eqref{eq:dissipativity_identity} from hypothesis~\eqref{eq:diss_on_range}. These are (a) the contrast weight $m$ in its leftmost integral, absent from $\langle\mathsf{S}^{(C)}\mathbf{w},\mathbf{w}\rangle_{\mathsf{D}}$, and (b) the passage from an integral over $D$ to the discrete $\mathsf{D}$-weighted pairing.

For (a), split the contrast as $m=m_0+\delta m$ with $m_0$ a real constant, so that~\eqref{eq:dissipativity_identity} becomes
\begin{equation}\label{eq:variable_contrast_perturbation}
  \imag\int_\Omega\overline{\mathcal{S}u^{\inc}}\;u^{\inc}\,\de x
  =\frac{1}{m_0}\Bigl(\int_\Omega(\imag n)|u|^2\de x+\frac1k\|u_\infty\|^2
    -\imag\int_\Omega\overline{\delta m\,u}\;u^{\inc}\,\de x\Bigr),
\end{equation}
so the unweighted pairing retains a definite sign, that of $m_0$, as long as the contrast variation $\delta m$ is small relative to the absorbed and radiated energy.

For (b), recall from~\eqref{eq:D_def} that $(\mathsf{D})_{jj}=\sum_q\psi_j(y_q)\approx\frac{N_Q}{|\Omega|}\int_\Omega\psi_j\de x$ at the pixelation nodes, so $\langle\cdot,\cdot\rangle_{\mathsf{D}}$ is a lumped-mass discretization of the $L^2(\Omega)$ inner product of the $\mathcal{Q}_1$ interpolants $x_h:=\sum_j x_j\psi_j$. Moreover, every $\mathbf{w}\in\operatorname{range}\mathsf{R}^{(C)}_{\mathrm{II}}$ vanishes at the non-pixelation nodes, so $w_h$ is supported in the union of the pixelation cells, a region containing $\Omega$ whose boundary lies at distance $\mathcal{O}(h_C)$ from $\partial\Omega$, and the pairing over $D$ reduces to one over $\Omega$ up to that $\mathcal{O}(h_C)$ discrepancy. Taking $u^{\inc}:=w_h$, noting that the quadrature points are nearly uniformly distributed, and interpreting $\mathsf{S}^{(C)}\mathbf{w}$ as the nodal values of $\mathcal{S}u^{\inc}$, we get
\begin{equation}\label{eq:discrete_to_continuous_pairing}
  \langle\mathsf{S}^{(C)}\mathbf{w},\mathbf{w}\rangle_{\mathsf{D}}
  \;\approx\;\frac{N_Q}{|\Omega|}\int_\Omega\overline{\mathcal{S}u^{\inc}}\;u^{\inc}\,\de x ,
\end{equation}
which is precisely the quantity in~\eqref{eq:variable_contrast_perturbation}. The energy balance thus supplies the mechanism behind~\eqref{eq:diss_on_range} as well as the sign of the imaginary part, but it does not certify the hypothesis, since the passage to the discrete pairing carries discretization error and $\mathsf{S}^{(C)}$ is a sparsified approximate inverse rather than a discretized solution operator. 

Appendix~\ref{app:precond_validation} probes~\eqref{eq:diss_on_range} directly, by sampling the normalized form $\langle\mathsf{S}^{(C)}\mathbf{w},\mathbf{w}\rangle_{\mathsf{D}}/\|\mathbf{w}\|^2_{\mathsf{D}}$ over $\operatorname{range}\mathsf{R}^{(C)}_{\mathrm{II}}$. It is observed that where the contrast has a definite sign, the sampled imaginary parts carry that sign, as predicted by~\eqref{eq:variable_contrast_perturbation}, and stay away from zero, so that~\eqref{eq:diss_imag_part} holds. In turn, where the contrast changes sign, the imaginary part does too, and it is instead a positive real part that keeps the form off the non-positive real axis. While the numerical evidence supports the hypothesis~\eqref{eq:diss_on_range} in the cases considered, it is not claimed to hold in general.

\section{Numerical examples}
\label{sec:numerical_examples}

In this section, we present a variety of numerical examples designed to validate and test  the capabilities of the proposed preconditioner for the high-order Nystr\"om discretization of the Lippmann--Schwinger equation~\eqref{eq:nystrom_linear_system}. The first part examines its performance on a benchmark  problem, plane-wave scattering by a constant-contrast disk, for which a closed-form  Fourier--Bessel series solution is available.  The second part presents examples that demonstrate that the preconditioning strategy extends to more general and challenging problems.

Unless otherwise specified, all examples in this section use the following
setup. The high-order VDIM potential map~\eqref{eq:DIM_map} is applied
matrix-free through the FMM (FMM2D), accessed via v0.2.0 of the
\texttt{Inti.jl} package~\cite{Inti} using Julia 1.12.5. We use a VR quadrature
order supporting VDIM interpolation order $p=2$ (six nodes per element) and a
Cartesian grid resolution set by $kh_C=0.125$. All boundary term corrections
in~\eqref{eq:DIM_decomp} are performed via the general-purpose density
interpolation method~\cite{faria2021general} with a boundary correction radius
of $7h$ for the layer potentials entering~\eqref{eq:nystrom_linear_system}
via~\eqref{eq:DIM_decomp} and of $14h$ for the field
recovery~\eqref{eq:representation_formula}. The latter is a single
post-processing apply, carried out only where an exterior error or a field plot
is reported and excluded from the reported solve times.  The linear
system~\eqref{eq:nystrom_linear_system} is solved by left-preconditioned GMRES
without restart. All computations are performed on a MacBook Pro with 16
physical cores and 48 GB of RAM, with the exception of the simulation depicted
in Figure~\ref{fig:yinyang} which was performed on a machine with 64GB of
memory.

\subsection{Unit disk benchmarks}\label{subsec:benchmark}

To compare the solution produced by our preconditioned solver against a reliable reference, we take $\Omega$ to be the unit disk with constant refractive index $n=n_1=1+\eta$ inside and $n=1$ outside, so that the contrast satisfies $\eta:=n_1-1=-m>0$, and let the incident field be the plane wave $u^{\inc}(x)=\e^{ik\,x\cdot d}$ with direction of propagation $d=(1,0)$. The domain is exactly discretized with the curved (isogeometric) triangular elements supported in \texttt{Inti.jl}. Since the contrast is constant, the interior wavenumber is $k_{\mathrm{int}}:=k\sqrt{1+\eta}=k\sqrt{n_1}$ and the reference solution $u_{\mathrm{ref}}$ is available as a Fourier--Bessel series which can be evaluated to any desired accuracy.

With a reference solution at hand, we measure the accuracy of our preconditioned VDIM solver by
the relative maximum-norm error of its solution $u$ with respect to $u_{\mathrm{ref}}$, taken over
a finite set $\mathcal{X}$ of \emph{target points},
\begin{equation}\label{eq:error_metrics}
  e^{\mathrm{rel}}(\mathcal{X})
    :=\frac{\max_{x\in\mathcal{X}}\,\bigl|u(x)-u_{\mathrm{ref}}(x)\bigr|}
           {\max_{x\in\mathcal{X}}\,\bigl|u_{\mathrm{ref}}(x)\bigr|}.
\end{equation}
The set $\mathcal{X}$ is defined explicitly wherever an error is reported, since it is chosen
according to what the experiment is meant to demonstrate. 

Two choices of $\mathcal{X}$ recur throughout: the \emph{interior} set, the quadrature nodes
themselves, and the \emph{exterior} set, the Cartesian nodes lying outside the scatterer,
\begin{equation}\label{eq:measurement_sets}
  \mathcal{X}_{\mathrm{int}}:=\mathcal{Q}=\{y_q\}_{q=1}^{N_Q}\subset\Omega,
  \quad
  \mathcal{X}_{\mathrm{ext}}:=\{\,z_j\ :\ j\notin\mathcal{I}\,\}\subset D\setminus\overline{\Omega},
\end{equation}
with the corresponding errors~\eqref{eq:error_metrics} abbreviated
$e^{\mathrm{rel}}_{\mathrm{int}}$ and $e^{\mathrm{rel}}_{\mathrm{ext}}$. On
$\mathcal{X}_{\mathrm{int}}$ the field is obtained directly from the solution
of~\eqref{eq:nystrom_linear_system}, whereas on $\mathcal{X}_{\mathrm{ext}}$ it is obtained from the representation
formula~\eqref{eq:representation_formula}.

\subsubsection{Selection of the transfer operator}\label{subsec:operator_selection}
We now compare the effectiveness of the two proposed preconditioner variants, dubbed Approach~I and Approach~II, so we can empirically select the best one going forward. Since both approaches yield virtually-identical discrete solutions, the selection rests solely on preconditioning performance, which we investigate under different physical and numerical settings.

We begin by comparing the spectral clustering of the unpreconditioned system~\eqref{eq:nystrom_linear_system} to that of the system preconditioned by each proposed approach. The top row of Figure~\ref{fig:ritz_spectrum} shows the full spectra of the matrices $\mathsf{L}$ (unpreconditioned) and $\mathsf{P}\mathsf{L}$ (for each of Approach~I and~II preconditioning), calculated for a dense construction at a coarse discretization level and a moderate wavenumber with $kh_C=1$ and $r_h=1$. Although all spectra show the accumulation at $\lambda=1$ expected of a second-kind integral equation discretization, the preconditioned spectra cluster far more tightly than the unpreconditioned one. Since a full spectrum computation at a more representative, finer resolution is prohibitively expensive with dense linear algebra, we instead compute, using the Arnoldi algorithm with reorthogonalization~\cite{daniel1976reorthogonalization}, the far cheaper Ritz values~\cite{trefethen2022numerical} using the matrix-free operators at $kh_C=0.125$ and $r_h=0.5$. The bottom row of Figure~\ref{fig:ritz_spectrum} shows that the clustering tightens further at finer resolutions. These Ritz values are the eigenvalues of the Hessenberg matrix generated by 80 Arnoldi iterations on each operator. In all cases, Approach~II shows a tighter clustering around $\lambda=1$ than Approach~I, and this will translate, as we show below, into reduced GMRES iteration counts.

\begin{figure}[htbp]
    \centering
    \includegraphics[width=\linewidth]{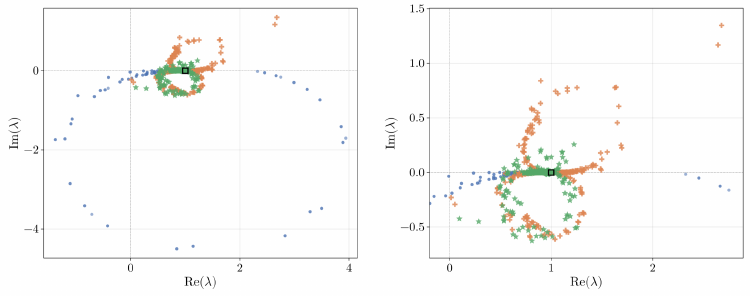}\\[4pt]
    \includegraphics[width=\linewidth]{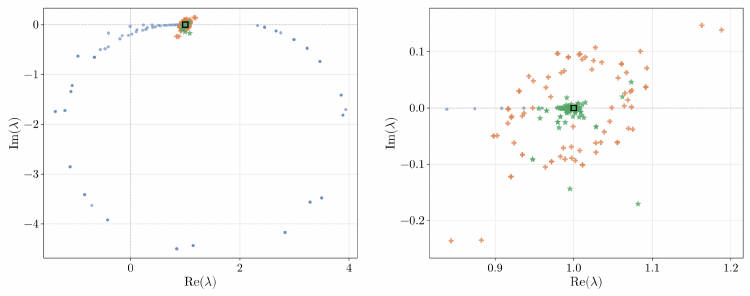}
    \caption{Spectral clustering around $\lambda=1$ for the unpreconditioned and preconditioned systems, for the benchmark constant-contrast disk with $k=10$, $\eta=1.25$. In every panel, disks mark the unpreconditioned spectrum, crosses Approach~I, asterisks Approach~II, and the open square marks $\lambda=1$. The left panel of each row spans the whole unpreconditioned spectrum, the right panel zooms around $\lambda=1$. Top row: full spectra from a dense construction at a coarse discretization, $kh_C=1$, $r_h=1$. Bottom row: 80 Ritz values from the matrix-free operators at a finer discretization, $kh_C=0.125$, $r_h=0.5$. These approximate eigenvalues resolve the outlying and clustered parts of the spectrum first, exposing the clustering without forming the matrices.}
    \label{fig:ritz_spectrum}
\end{figure}

Next, we use both approaches to solve the problem of time-harmonic plane-wave scattering by the constant-contrast disk.
Table~\ref{tab:bakeoff} reports that, at the same tolerance levels, the Approach~I preconditioned system solution uniformly needs more GMRES
iterations to converge in comparison with Approach~II. Remarkably, the number of iterations in Approach~II remain almost constant with respect to increasing wavenumbers and contrasts while those in Approach~I mildly increase. It should also be mentioned that although Approach~II is more frequency robust, both approaches need significantly fewer iterations to converge and their iterations do not grow as those of an unpreconditioned solve (see reference values in Table~\ref{tab:bakeoff}). In view of the presented evidence and
based on conditioning, spectrum clustering and frequency robustness of the resulting preconditioned system, we select Approach~II as the default preconditioning strategy in what follows.

\begin{table}[htbp]
  \centering
  \caption{Frequency robustness of the proposed preconditioning approaches
  tested on the benchmark constant-contrast disk at the Cartesian resolution $k\,h_C=0.125$, measured in GMRES iteration counts for different wavenumbers and contrasts. (a)~Wavenumber sweep with $\eta=1.25$ and $r_h=1.0$.
  (b)~Contrast sweep with $k=10$, $\eta$ and a triangulation size matched to the interior 
  wavelength $h=h_C/\sqrt{1+\eta}$ to resolve $k_{\mathrm{int}}=k\sqrt{1+\eta}$. In each 
  case the iteration count needed for the unpreconditioned solution convergence is also reported.  The FMM and GMRES relative residual tolerances for this experiment were both $10^{-8}$.}
  \label{tab:bakeoff}
  \medskip

  \begin{minipage}[t]{0.44\linewidth}
    \centering
    (a) Wavenumber ($\eta=1.25$, $r_h=1.0$)\par\smallskip
    \begin{tabular}{rrrr}
      \hline
      $k$ & iters (I) & iters (II) & iters (unpr.) \\
      \hline
      $10$ & $8$  & $7$ & 86 \\
      $20$ & $9$  & $6$ & 300 \\
      $40$ & $12$ & $7$ & $>600$ \\
      \hline
    \end{tabular}
  \end{minipage}\hfill
  \begin{minipage}[t]{0.55\linewidth}
    \centering
    (b) Contrast ($k=10$)\par\smallskip
    \begin{tabular}{rrrrr}
      \hline
      $\eta$ & $k_{\mathrm{int}}$ & iters (I) & iters (II) & iters (unpr.) \\
      \hline
      $1.25$ & $15.0$  & $9$  & $7$ & $85$  \\
      $2.5$  & $18.7$  & $13$ & $8$ & $144$ \\
      $5$    & $24.5$  & $16$ & $8$ & $241$ \\
      \hline
    \end{tabular}
  \end{minipage}
\end{table}

With Approach~II selected, we close this section by verifying that the resulting VDIM-based preconditioned solver exhibits the high-order convergence of the underlying VDIM discretization~\eqref{eq:linear_system}. Again on the disk with $k=10$ and $\eta=1.25$, we solve at VR interpolation orders $p\in\{1,2,3\}$ over $11$ shared triangulation sizes with $h\in[0.010,\,0.075]$, measuring the relative maximum-norm error~\eqref{eq:error_metrics} against a Fourier--Bessel series truncated so as to be accurate to a relative error of $10^{-14}$. The error is measured on a mesh-independent target set
$\mathcal{X}^{50}_{1/2}:=\Bigl\{\frac12\bigl(\cos\tfrac{2\pi j}{50},\,\sin\tfrac{2\pi j}{50}\bigr):\ j\in\{0,\ldots,49\}\Bigr\}$.
The error then probes the transmitted field, evaluated
through the representation formula~\eqref{eq:representation_formula}. The measured orders are
$\approx4.0,\,5.1,\,6.1$, matching those reported in~\cite{anderson2024fast} and confirming that
the preconditioner leaves the high-order convergence of VDIM intact.
Figure~\ref{fig:convergence_disk} shows these errors together with reference rates; we briefly remark, as mentioned also in~\cite{anderson2024fast}, that solely at $p = 1$ we observe convergence rate one order higher compared to that predicted by the theory presented in~\cite{anderson2024fast}.

\begin{figure}[ht]
    \centering
    \includegraphics[width=0.5\linewidth]{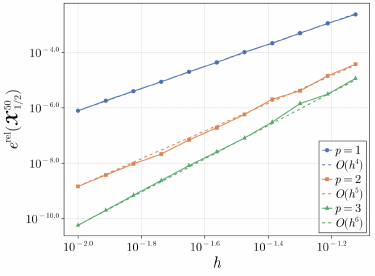}
    \caption{High-order convergence rates when using VR quadratures supporting $p\in\{1,2,3\}$ VDIM interpolation orders, measured on the 
    benchmark constant-contrast disk with $k=10$ and $\eta=1.25$ preconditioned with Approach~II.
    Results obtained from the relative maximum-norm error~\eqref{eq:error_metrics} on the interior target-point set
    $\mathcal{X}^{\,50}_{1/2}$.
    Dashed references: $\mathcal{O}(h^4)$, $\mathcal{O}(h^5)$, and $\mathcal{O}(h^6)$.}
    \label{fig:convergence_disk}
\end{figure}

\subsubsection{Comparison with a purely Cartesian discretization}\label{subsec:cartesian_comparison}
This section presents a comparison of the proposed preconditioned Nystr\"om solver, built on a triangular mesh fitted to the scatterer, with a purely Cartesian one---the structured-grid discretization of~\cite{duan2009high} preconditioned by the sparsifying preconditioner of~\cite{ying2015sparsifying}---in both preconditioner effectiveness and solution accuracy. To do so, we again solve the plane-wave scattering by a constant-contrast disk problem at different wavenumbers and contrast values. The Cartesian solver discretizes the Lippmann--Schwinger equation directly on a uniform grid over the $[-2,2]^2$ box with $kh_C=0.125$, yielding the system~\eqref{eq:LS_eq_on_cartesian_points}, which is  preconditioned by the sparsifying preconditioner~$\mathsf{S}^{(C)}$. Table~\ref{tab:struct_vs_fitted} compares the results to those of the proposed solver. Regarding GMRES iteration count, both preconditioned systems remain  within the same $6$--$10$ range, while the unpreconditioned count grows as the wavenumber or the contrast increases. The two discretizations offer strikingly different accuracies. As expected, because the structured discretization cannot resolve the contrast jump discontinuity across the boundary of the disk, its error stalls at around $10^{-2}$ to $10^{-1}$. The fitted discretization is instead six to seven orders of magnitude more accurate, reaching $10^{-9}$ to $10^{-7}$, where it is limited by the GMRES and FMM tolerances rather than by the discretization itself.

\begin{table}[htbp]
  \centering
  \caption{Fitted versus structured discretization, with (a) a wavenumber and (b) a contrast
  sweep. Columns \emph{fit.}\ and \emph{unp.} are the $\mathsf{P}$-preconditioned and the
  unpreconditioned Nystr\"om system~\eqref{eq:nystrom_linear_system}, \emph{str.}\ the
  $\mathsf{S}^{(C)}$-preconditioned Cartesian system~\eqref{eq:LS_eq_on_cartesian_points}, all at
  the same stopping tolerance. Errors are relative maximum-norm~\eqref{eq:error_metrics} against
  the Fourier--Bessel reference, each evaluated on the interior discretization points of the respective solver:
  $e_{\mathrm{fit}}$ on the quadrature set~\eqref{eq:measurement_sets} and
  $e_{\mathrm{struct}}$ on the interior Cartesian nodes~\eqref{eq:interior_cartesian_indices}.  The FMM and GMRES relative residual tolerances for this experiment were both $10^{-8}$.}
  \label{tab:struct_vs_fitted}
  \medskip
  \setlength{\tabcolsep}{2.5pt}

  \begin{minipage}[t]{0.48\linewidth}
    \centering
    (a) Wavenumber ($\eta=1.25$)\par\smallskip
    \begin{tabular}{rrrrrrr}
      \hline
      & & \multicolumn{3}{c}{iters} & \multicolumn{2}{c}{Max. error} \\
      $k$ & $k_{\mathrm{int}}$ & fit. & str. & unp. & $e_{\mathrm{struct}}$ & $e_{\mathrm{fit}}$ \\
      \hline
      $5$  & $7.5$ & $6$ & $6$ & $28$  & $2.1\times10^{-2}$ & $2.7\times10^{-9}$ \\
      $10$ & $15$  & $7$ & $6$ & $85$  & $8.7\times10^{-2}$ & $2.1\times10^{-9}$ \\
      $20$ & $30$  & $6$ & $6$ & $298$ & $3.7\times10^{-2}$ & $1.2\times10^{-8}$ \\
      $40$ & $60$  & $7$ & $7$ & $>600$   & $2.1\times10^{-2}$ & $1.4\times10^{-7}$ \\
      \hline
    \end{tabular}
  \end{minipage}\hfill
  \begin{minipage}[t]{0.48\linewidth}
    \centering
    (b) Contrast ($k=10$)\par\smallskip
    \begin{tabular}{rrrrrrr}
      \hline
      & & \multicolumn{3}{c}{iters} & \multicolumn{2}{c}{Max. error} \\
      $\eta$ & $k_{\mathrm{int}}$ & fit. & str. & unp. & $e_{\mathrm{struct}}$ & $e_{\mathrm{fit}}$ \\
      \hline
      $1.25$ & $15$    & $7$  & $6$ & $85$  & $8.7\times10^{-2}$ & $2.1\times10^{-9}$ \\
      $2.5$  & $18.71$ & $8$  & $6$ & $144$ & $1.1\times10^{-1}$ & $1.2\times10^{-8}$ \\
      $5$    & $24.49$ & $8$  & $6$ & $241$ & $8.1\times10^{-2}$ & $4.2\times10^{-8}$ \\
      $10$   & $33.17$ & $10$ & $7$ & $415$ & $1.2\times10^{-1}$ & $1.5\times10^{-7}$ \\
      \hline
    \end{tabular}
  \end{minipage}
\end{table}

\subsubsection{Parameter dependence: mesh sizes, frequency and contrast}\label{subsec:parameter_dependence}

In this section we study the effect of the relevant parameters on the preconditioner's effectiveness and the overall accuracy of the solver.  We begin by examining the effect of the mesh sizes $h$ and $h_C$ on accuracy and performance.
Figure~\ref{fig:error_heatmap} reports results from sweeping four Cartesian mesh sizes against five triangulation sizes for the benchmark disk with $k=10$ and $\eta=1.25$. Along each row, at fixed $h_C$, the iteration
count is unchanged while the error decreases with $h$ at the rate determined by the order of the VDIM scheme. Along each column, at fixed $h$, the error is unchanged while the iteration count decreases with $h_C$. As expected then, accuracy is therefore governed by $h$ and preconditioning performance by $h_C$, and refining both at a fixed $r_h$ improves the two together, independently of the problem size. Interestingly, the behavior along a row persists well beyond the range spanned by the figure. At the finest Cartesian resolution, $kh_C=0.125$, sweeping $r_h$ from $3$ down to $0.25$---that is, from
$3.2\times10^{4}$ to $4.5\times10^{6}$ quadrature nodes---leaves the iteration count constant. Meanwhile $e^{\mathrm{rel}}_{\mathrm{int}}$ decreases from $1.4\times10^{-6}$ to a solver/FMM floor near $5\times10^{-10}$, first reached at $r_h\approx0.5$; $e^{\mathrm{rel}}_{\mathrm{ext}}$
follows it, decreasing from $8.1\times10^{-7}$ to $3\times10^{-10}$.

\begin{figure}[htbp]
    \centering
    \includegraphics[width=\linewidth]{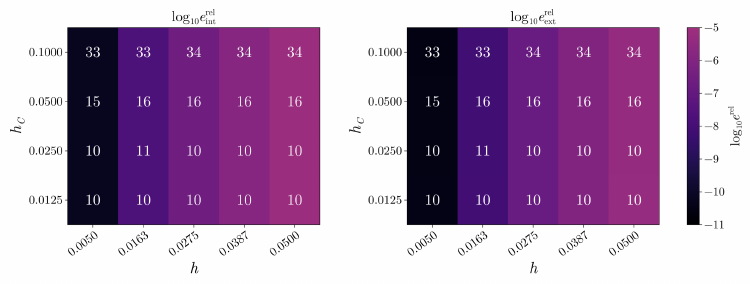}
    \caption{Relative maximum error heatmap matrix overlaid with GMRES iteration count
    at different mesh sizes $h$ and $h_C$ computed for the benchmark problem of 
    plane-wave scattering by a constant-contrast disk with $k=10$, $\eta=1.25$ and preconditioned
    using Approach~II. The colormap reports the magnitude of the relative maximum
    norm~\eqref{eq:error_metrics} over $\mathcal{X}_{\mathrm{int}}$ (left) and
    $\mathcal{X}_{\mathrm{ext}}$ (right), both defined in~\eqref{eq:measurement_sets}, and the
    overlaid numbers on each cell report the number of GMRES iterations needed for convergence at
    the corresponding mesh sizes. The FMM and GMRES relative residual tolerances for this experiment were both $10^{-12}$.}
    \label{fig:error_heatmap}
\end{figure}

We now fix the resolution and sweep $k\in\{5,10,20,40\}$, repeating at four resolutions $kh_C\in\{1,\tfrac12,\tfrac14,\tfrac18\}$, i.e., $6.3$, $12.6$, $25.1$ and $50.3$ points per wavelength, with $\eta=1.25$ and $h=h_C/\sqrt{1+\eta}$. Figure~\ref{fig:coarse_ppw} shows
the resulting sweeps of the interior error and of the iteration count at each fixed resolution. Both quantities grow more slowly with $k$ as the resolution is refined, and essentially become flat at $50.3$ points per wavelength. At that resolution the method appears to be frequency independent
both in accuracy and in the number of iterations.

\begin{figure}[htbp]
    \centering
    \includegraphics[width=\linewidth]{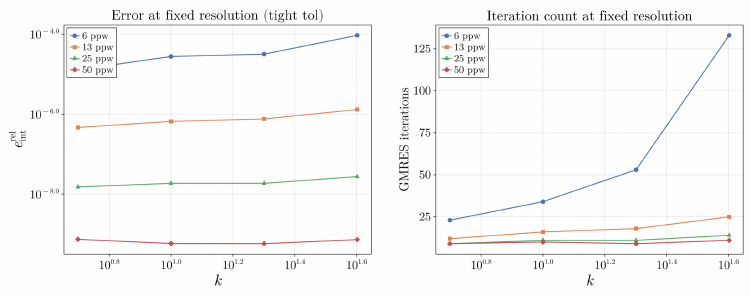}
    \caption{Interior error and GMRES iteration counts for different wavenumbers at fixed
    resolution, i.e. constant $k\,h_C$ (equiv. $k_{\mathrm{int}}\,h$) for the constant-contrast
    disk plane-wave scattering problem with $\eta=1.25$. One curve per Cartesian resolution: relative maximum-norm
    interior error vs.\ $k$ at fixed resolution (left) and preconditioned GMRES iteration count
    over the same sweeps (right).  The FMM and GMRES relative residual tolerances for this experiment were $10^{-14}$ and $10^{-12}$, respectively.}
    \label{fig:coarse_ppw}
\end{figure}

At this finest resolution, $kh_C=0.125$, we further test robustness to increasing contrast $\eta$, keeping $h=h_C/\sqrt{1+\eta}$ so the triangulation continues to resolve the interior wavelength $k_{\mathrm{int}}=k\sqrt{1+\eta}$. Table~\ref{tab:robustness} lists the underlying wavenumber
sweep (panel (a), matching the finest curve of Figure~\ref{fig:coarse_ppw}) and the contrast sweep (panel (b)). The preconditioner is robust to both, with the interior error staying on the $5$--$8\times10^{-10}$ floor throughout. The only exception is the iteration count at $\eta=10$
in panel (b), which rises to $15$. This is a resolution effect, not a failure of the preconditioner: since $h_C$ is tied to the exterior wavenumber $k$, it resolves the interior field progressively worse as the contrast grows, while the error, governed by $h$, is unaffected. 


\begin{table}[htbp]
  \centering
  \caption{Constant-contrast disk solved with Approach~II at $p=2$, with $kh_C=0.125$ and $h=h_C/\sqrt{1+\eta}$. Preconditioned GMRES iteration count and interior relative maximum-norm error against the Fourier--Bessel reference. Panel (a) is a wavenumber sweep at fixed contrast and panel (b) a contrast sweep at fixed wavenumber. The $k=10$ row of (a) and the $\eta=1.25$ row of (b) are the same problem discretized on two independently generated meshes. The FMM and GMRES relative residual tolerances for this experiment were $10^{-14}$ and $10^{-12}$, respectively.}
  \label{tab:robustness}
  \medskip

  \begin{minipage}[t]{0.48\linewidth}
    \centering
    (a) Wavenumber ($\eta=1.25$)\par\smallskip
    \begin{tabular}{rrrrr}
      \hline
      $k$ & $k_{\mathrm{int}}$ & $N_Q$ & iters & $e^{\mathrm{rel}}_{\mathrm{int}}$ \\
      \hline
      $5$  & $7.5$ & 157\,734     & $9$  & $7.38\times10^{-10}$ \\
      $10$ & $15$  & 628\,812     & $10$ & $5.81\times10^{-10}$ \\
      $20$ & $30$  & 2\,511\,312  & $9$  & $5.75\times10^{-10}$ \\
      $40$ & $60$  & 10\,038\,072 & $11$ & $7.27\times10^{-10}$ \\
      \hline
    \end{tabular}
  \end{minipage}\hfill
  \begin{minipage}[t]{0.48\linewidth}
    \centering
    (b) Contrast ($k=10$)\par\smallskip
    \begin{tabular}{rrrrr}
      \hline
      $\eta$ & $k_{\mathrm{int}}$ & $N_Q$ & iters & $e^{\mathrm{rel}}_{\mathrm{int}}$ \\
      \hline
      $1.25$ & $15$    & 628\,812    & $10$ & $5.81\times10^{-10}$ \\
      $2.5$  & $18.71$ & 978\,606    & $11$ & $5.03\times10^{-10}$ \\
      $5$    & $24.49$ & 1\,676\,580 & $11$ & $5.28\times10^{-10}$ \\
      $10$   & $33.17$ & 3\,072\,084 & $15$ & $6.07\times10^{-10}$ \\
      \hline
    \end{tabular}
  \end{minipage}
\end{table}

\subsubsection{Cost and computational scaling}\label{subsec:scaling}

In this section we report the computational cost of our overall preconditioned method and its scaling with the problem size. 
We measure both the wall-clock time and the storage needed to solve the preconditioned system with our FMM-accelerated forward-operator application, at different problem sizes determined by the total number of quadrature nodes $N_Q$, which is increased in two ways: first, by refining
the mesh at fixed wavenumber $k=10$; second, by sweeping different wavenumbers while keeping the resolution fixed at $k\,h_C=0.125$. 

Figure~\ref{fig:scaling} (left) shows the total solve time for both sweeps and its expected linearithmic scaling, while Table~\ref{tab:scaling} breaks down the $k$-sweep experiment times by individual stages of the solver pipeline, excluding mesh generation and the reference-solution evaluation. All runs were parallelized with a number of threads matching the system's $16$ physical cores, using OpenMP threads for the FMM and Julia threads for the transfer-operator assembly. Figure~\ref{fig:scaling} (right) and Table~\ref{tab:scaling} show memory usage growing linearly in $N_Q$. The largest problem we tested, at $k=40$ and $\eta=1.25$, needed over $10$ million degrees of freedom, $5.1$~GiB of storage for the operators, and $19.6$~GiB of peak memory usage (the gap being attributable largely to usage by the FMM), and was solved end-to-end in under $7$ minutes.

\begin{figure}[htbp]
    \centering
    \includegraphics[width=\linewidth]{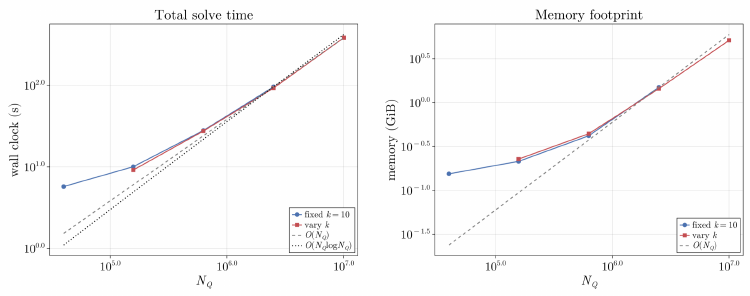}
    \caption{Computational scaling of the preconditioned method with Approach~II for the
    constant-contrast disk benchmark problem.
    Left: total wall time vs $N_Q$ (log--log). Right: persistent live-heap memory
    vs $N_Q$ (log--log). Blue: mesh refinement at
    fixed $k=10$. Red: wavenumber sweep, $hk$ fixed.}
    \label{fig:scaling}
\end{figure}

\begin{table}[htbp]
  \centering
  \caption{Constant-contrast disk with $\eta=1.25$ and $k\,h_C=0.125$, solved in parallel with $16$ threads.
  Per-stage wall-clock times, where $t_{\mathcal{V}_{\Omega}}$ is the
  volume-potential assembly at the quadrature nodes, $t_{\mathrm{pre}}$ the preconditioner and
  transfer-operator build, and $t_{\mathrm{solve}}$ the preconditioned GMRES solve, together with
  their sum $t_{\mathrm{tot}}$, the persistent
  memory (precomputed operator storage) and the peak memory (solve stage) usage.  The FMM and GMRES relative residual tolerances for this experiment were both $10^{-8}$.}
  \label{tab:scaling}
  \begin{tabular}{rrrrrrrrr}
    \hline
    $k$ & $N_Q$ & iters & $t_{\mathcal{V}_{\Omega}}$ & $t_{\mathrm{pre}}$ & $t_{\mathrm{solve}}$ & $t_{\mathrm{tot}}$ (s) & mem (GiB) & peak (GiB) \\
    \hline
    5  & 157\,734     & 6 & 4.9   & 1.47  & 2.8   & 9.2   & 0.23 & 5.10  \\
    10 & 628\,812     & 7 & 12.7  & 6.05  & 8.7   & 27.5  & 0.44 & 13.81 \\
    20 & 2\,511\,312  & 6 & 40.8  & 22.52 & 29.4  & 92.7  & 1.44 & 17.22 \\
    40 & 10\,038\,072 & 7 & 157.6 & 88.56 & 137.5 & 383.7 & 5.13 & 19.58 \\
    \hline
  \end{tabular}
\end{table}

It follows from these results that the cost of a single GMRES iteration is proportional to the problem size. From Table~\ref{tab:scaling}, the solve time per iteration and per degree of freedom is nearly constant---$1.98$, $1.95$, $1.96\,\mu$s---over a sixteen-fold range in $N_Q$, of which $1.50\,\mu$s (at $k=20$) is the forward-map application (almost entirely one FMM) and the rest is the preconditioner application and Krylov orthogonalization. Thus, the total solve time is, to an exceedingly good approximation, the iteration count times a size-independent constant times $N_Q$, justifying the use of iteration count as the primary figure of merit earlier in this section. Clearly, preconditioning pays for itself several times over: at $\eta=1.25$, $k\,h_C=0.125$, the unpreconditioned solve takes $28$, $85$, $298$ GMRES iterations at $k=5,10,20$ (vs.\ $6$, $7$, $6$ preconditioned) and $11.5$, $91.2$, $1300.9$~s (vs.\ $2.8$, $8.7$, $29.4$~s). Even charging the one-time preconditioner and transfer-operator assembly in full to the preconditioned run---$1.5$, $6.1$, $22.5$~s, i.e.\ $4.3$, $14.8$, $51.9$~s total---it remains $25\times$ faster at $k=20$, and increasingly so as $k$ grows. (The volume-potential assembly, common to both, is naturally excluded from analysis here.)

\subsection{More general examples}\label{subsec:challenging}

This section explores the limits of the proposed solver on several challenging scattering scenarios. Throughout, we use Approach~II for the quadrature-to-Cartesian transfer operator $\mathsf T_Q^{C}$, and take the nodes from a VR  quadrature of interpolation order $p=2$.

Our first example concerns plane-wave scattering by an inhomogeneous scatterer supported on a kite-shaped domain~\cite[sec.~3.5]{COLTON:2012}, whose contrast $m\equiv 1-n$ (defined in~\eqref{eq:contrast_def}) is linearly graded and has a jump discontinuity across the smooth boundary. Superimposed on this is a sharply localized smooth bump in the refractive index (see Figure~\ref{fig:kite_lens_refined}). The resulting spatially varying contrast is given by $m(x)=1.5\,(x_2-1.1)-21.6\,w(x)$, where $w\in C_0^\infty$ is a radial window function that vanishes outside the disk $\{x\in\R^2:|x-(0.4,-0.5)|\le 0.18\}$. This contrast has two interesting features. On the one hand, it changes sign across a curve near the top of the kite-shaped domain (the level set $x_2=1.1$, away from the bump), and, on the other, the localized bump creates a small region where the contrast peaks at $\max_{\overline\Omega}|m|=24$. Notably, resolving this high-contrast region is straightforward with the proposed solver. Because our fitted discretization places its quadrature points on an unstructured triangulation, we can easily refine the mesh locally.

The top row of Figure~\ref{fig:kite_lens_refined} shows the contrast field (left) and a close-up of the locally refined triangulation, with each element colored by its size (right). The bottom row shows the real part of the total field obtained from the preconditioned system for a plane wave propagating in the direction $d=(1,0)$ with wavenumber $k=75$ at $k\,h_C=0.25$. The bottom-left panel shows the full computational domain and the scattering produced by the sign-changing contrast, while the bottom-right panel zooms into the high-contrast region, illustrating how the discretization handles two disparate wavelength scales seamlessly. This solution was computed with a single refinement level $r_h=1$ ($h=3.33\times10^{-3}$, locally reduced to $h_{\mathrm{peak}}=6.67\times10^{-4}$ in the refined zone), amounting to $N_Q=6.6\times10^{6}$ quadrature nodes, for which the preconditioned solver required $26$ GMRES iterations to reach a relative residual tolerance of $10^{-9}$.

\begin{figure}[htbp]
    \centering
    \includegraphics[width=1.0\linewidth]{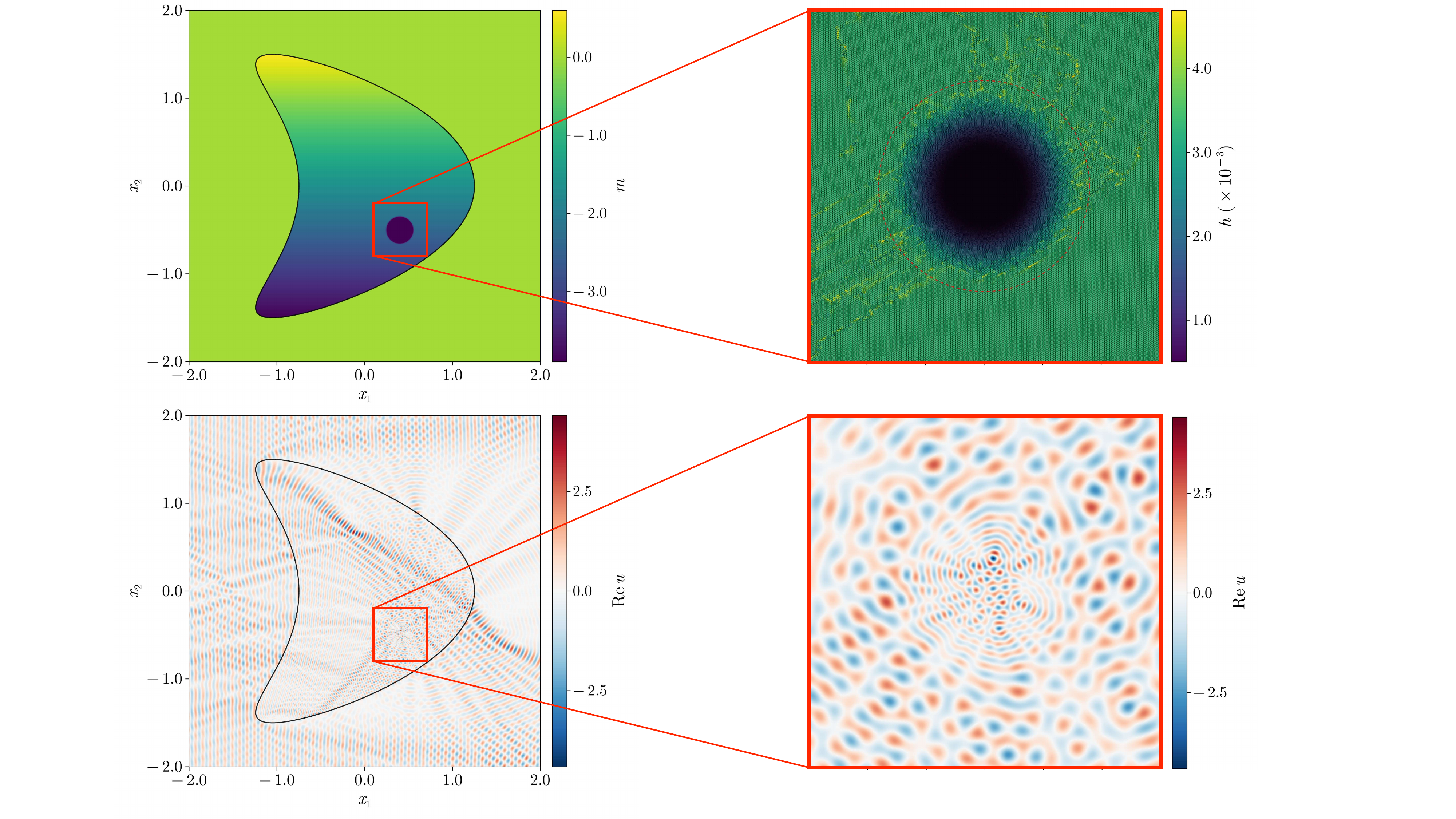}
    \caption{Planewave scattering problem by a scatterer modeled by a graded contrast 
    supported on a kite-shaped domain with a concentrated
    high magnitude region. The solution is computed for an incident field wavenumber of $k=75$
    and preconditioned with Approach~II.
    Top left: contrast $m$, with the color range clipped to the range $[-3.9,0.6]$ of the graded part of the index; note that the contrast peak is approximately $6$ times larger than the color range limit.
    Top right: local mesh size $h$ in the refined zone (dashed circle: peak support, radius $0.18$).
    Bottom left: total field $\mathrm{Re}\,u$. Bottom right: zoom of $\mathrm{Re}\,u$ near the contrast peak.}
    \label{fig:kite_lens_refined}
\end{figure}

In the next example we consider a scatterer supported on the unit disk, with a ``yin-yang'' type refractive index depicted in Figure~\ref{fig:yinyang} (top-left). The disk is split by a smooth curved interface into two subregions, $\Omega_1$ and $\Omega_2$, in which the refractive index takes the base values $4$ (dark gray) and $16$ (white). To each subregion we add a Gaussian bump, the two bumps having opposite signs but equal amplitude, so that the refractive index varies smoothly within each subregion. The resulting contrast is piecewise smooth, with a jump discontinuity across the curved interface. 

As described in Section~\ref{sec:Nystrom_discretization}, we discretize the integration domain $\Omega=\Omega_1\cup\Omega_2$ by meshing each disjoint region separately and assembling the volume potential as in~\eqref{eq:vol_pot_decomp} with $N_d=2$, so that the polynomial interpolant $F^{(\tau)}$ and the boundary correction $\mathcal{B}_{\Omega_\ell}$ of~\eqref{eq:DIM_decomp} are always confined to a single region, with the internal interface entering as part of the boundary $\partial\Omega_\ell$ of each subdomain rather than as a discontinuity interior to the integration domain. We highlight the fact that, although each triangulation conforms to the internal interface, which is resolved by mesh edges on both of its sides, by such a construction the two triangulations need not match node-to-node across it. The remaining panels of Figure~\ref{fig:yinyang} show the real part (top right, with a zoomed view in the bottom right) and the absolute value (bottom left) of the total field produced by an incident plane wave with $k=64$ propagating from below, obtained by solving the preconditioned system in $29$ GMRES iterations.

\begin{figure}[htbp]
    \centering
      \includegraphics[width=0.9\textwidth]{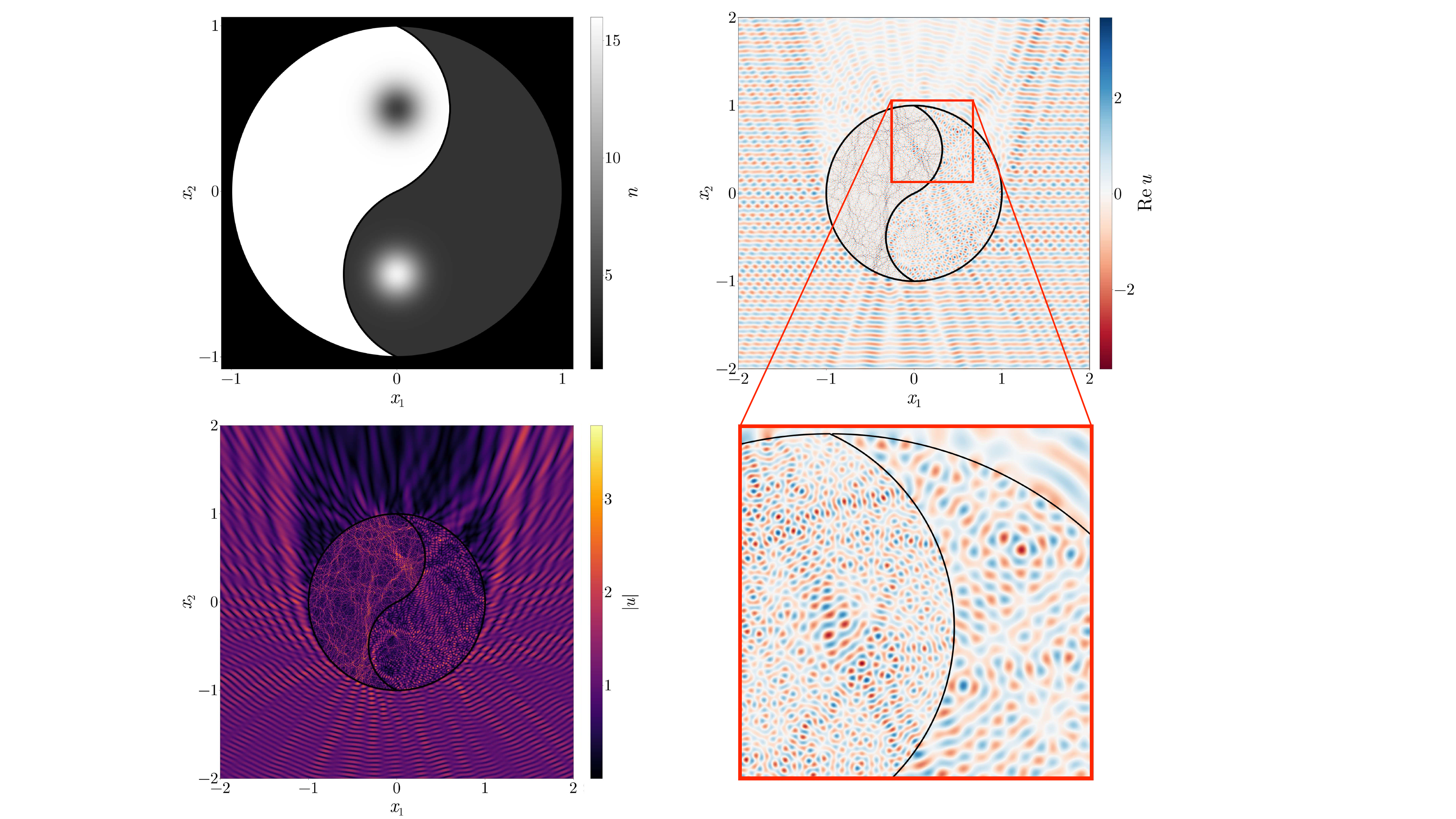}
    \caption{Plane-wave scattering by the yin-yang contrast field (top left),
    which has a jump discontinuity across the curved interface. The right
    panels show the real part of the total field obtained from the
    Approach~II preconditioned problem, for plane-wave incidence with $k=64$
    propagating from below. The bottom-right panel is a zoomed view of the
    top-right one, and the bottom-left panel shows the absolute value of the
    total field. The solver required 29 iterations to reach an absolute
    residual tolerance of $10^{-6}$. A convergence study at $k=8$, using the
    solution values on a set $\mathcal{X}^{100}_{1.2}$ of $100$ points on a ring at
    distance $0.2$ from the obstacle, gave a convergence order of $4.34$ in
    the relative maximum-pointwise norm~\eqref{eq:error_metrics}.}
    \label{fig:yinyang}
\end{figure}

Our next example concerns plane-wave scattering by an open-cavity resonator. In detail, we take as support of the contrast function the truncated annulus $\Omega=\{x\in\R^2:\,r<|x|<R,\ |\theta|>\theta_0\}$, with $\theta$ the polar angle and the values $(r,R,\theta_0)=(0.8,\,1,\,20^\circ)$. Geometrically, $\Omega$ is a thin annular shell from which a wedge of angular width $2\theta_0$ has been removed (see the left panel in Figure~\ref{fig:open_resonator}). Within $\Omega$ we take the radial contrast $m(x)=\exp(-((\rho-0.9)/0.1201)^2)$, $\rho=|x|$, extended by zero outside. The total field solution of the right panel in Figure~\ref{fig:open_resonator} corresponds to the plane wave $u^{\inc}(x)=\e^{ik\,x\cdot d}$ with $d=(-1,0)$, which enters the cavity through its aperture, at $k=200$. With $kh_C=0.25$ and $r_h=3$, that is, $8.4$ elements per wavelength, the preconditioned solver reached a relative residual of $3.15\times10^{-10}$ in $13$ GMRES iterations, which are of the order of those of the constant refractive index benchmark problem, but here on a geometry that traps the field. The value $r_h=3$, coarser than in the preceding examples, is used due to memory limitations rather than accuracy. 

At $k=200$ a mesh refinement to assess the error is not affordable. We therefore, as in the yin-yang example, carry out a self-convergence study at the lower frequency $k=8$, measuring the relative maximum-norm error~\eqref{eq:error_metrics} of each level against the finest one, taken as reference, on $1000$ points uniformly distributed on the part of a circle of radius $0.9$ lying inside the C-shaped scatterer. Refining the triangulation through $r_h=h/h_C\in\{0.7,0.5,0.3,0.1\}$ with $kh_C=0.25$ fixed yields a fitted order of $4.97$, while refining through $r_h\in\{1.0,0.8,0.6,0.4,0.2\}$ with $kh_C=0.125$ yields a fitted order of $5.01$. The two observed orders agree closely, exhibiting high-order convergence even in this geometrically delicate region of the scatterer, while the GMRES iteration count stays flat at $9$ over the whole span in problem size. At $k=200$, as in the figure, a reference-free check is still available for this example. Since the support, the contrast and the incidence direction are all invariant under reflection with respect to the $x_1$ axis, so is the exact field. The unstructured triangulation, however, does not respect this symmetry, yet the computed solution differs from its own reflection by only $1.2$--$1.6\times10^{-3}$ relative to $\max|u|$ at points inside and outside the scatterer, including at the cavity opening and around the corners. 

\begin{figure}[htbp]
    \centering
    \includegraphics[width=\linewidth]{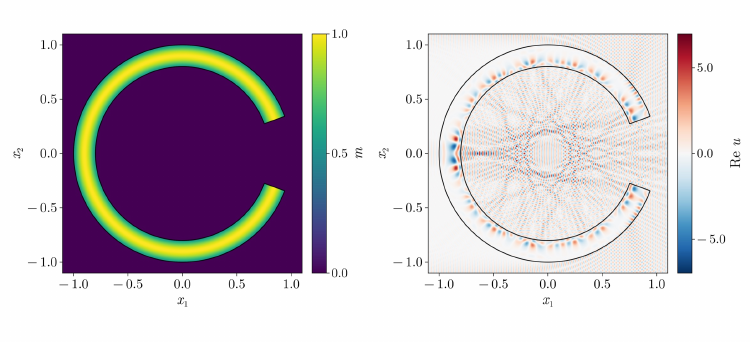}
    \caption{Planewave scattering by an open resonator.
    The left panel shows the contrast while the right panel shows the real part of the total field for a right-to-left incident plane wave, $d=(-1,0)$, with $k=200$ at $kh_C=0.25$ and $r_h=3$, computed with the preconditioned solver in $13$ GMRES iterations.}
    \label{fig:open_resonator}
\end{figure}

We conclude this section by reproducing and extending the photonic-crystal example of~\cite[Sec.~9.2]{StrauszerCaussade2023windowed}, which considers a plane wave at normal incidence, in transverse
electric (TE) polarization, on a two-dimensional crystal formed by a lattice of low-index pores, at
a frequency inside the stop band. The lattice geometry, the pore radius $r_p$, the incident
wavelength and the non-dimensionalization are those of that reference, to which we refer the reader
for all details. We take the pores as the scatterer and the surrounding crystal medium as the
background, $n\equiv1$. Unlike the solution in~\cite{StrauszerCaussade2023windowed}, which is obtained from a boundary integral formulation for an infinite periodic crystal, we here solve with our preconditioned volume solver a crystal of finite thickness and finite length, comprising $N_x$ periods.
Moreover, we consider an imperfect crystal, meant to model fabrication defects, in which the refractive index is given by $n(\mathbf{x})=n_0+\tilde n(\mathbf{x})$, with $n_0=1/2.6$ and $\tilde n$ being a normalized superposition of $500$ random Gaussian bumps of widths uniformly distributed in $[0.5r_p,2r_p]$ and amplitudes limited by $\max|\tilde n|=\delta\,n_0$, so that the dimensionless parameter $\delta$ is the maximum relative defect of the pore index (Figure~\ref{fig:g3g5_strauszer_imperfect}(a) shows the case $\delta=0.1$). The resulting contrast is smooth and non-constant inside each curved pore and can therefore no longer be treated by a boundary integral formulation.
Solving at $kh_C=0.125$, we sweep the crystal length over $N_x\in\{11,21,31\}$ and the defect over $\delta\in\{0.1,0.2\}$. The incident field is almost completely reflected in every case, within the same region as in~\cite[Fig.~10]{StrauszerCaussade2023windowed}, confirming that the tested frequency lies in the stop band. GMRES iterations remain essentially flat, between $9$ and $11$, over problem sizes ranging from $1.6$ to $6.8$ million degrees of freedom. Figure~\ref{fig:g3g5_strauszer_imperfect}(b) shows the real part of the total field for $N_x=21$ and $\delta=0.1$.

\begin{figure}[htbp]
    \centering
    \subfloat[Imperfect contrast field $\tilde n/n_0$, $\delta=0.1$]{\includegraphics[width=0.92\linewidth]{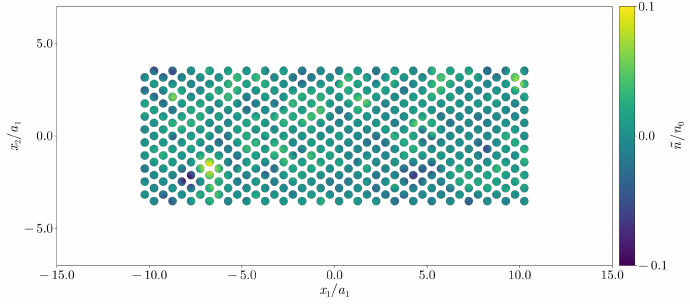}}\\
    \subfloat[Present solver, imperfect crystal, $\delta=0.1$]{\includegraphics[width=0.92\linewidth]{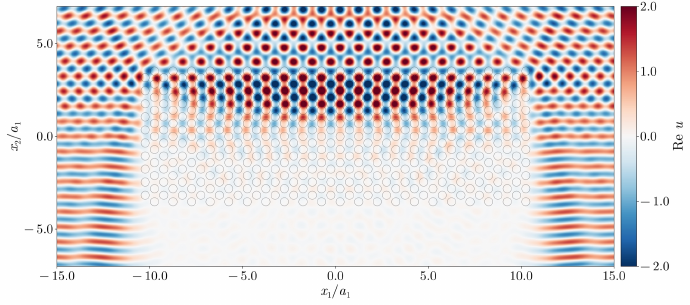}}
    \caption{Finite imperfect photonic-crystal slab with $N_x=21$ periods at the
    TE stop-band frequency. Panel (a) shows the smoothly varying pore-index
    perturbation $\tilde n/n_0$ and (b) the resulting total field $\mathrm{Re}\,u$.
    Both panels span the same window $x_1/a_1\in[-15,15]$, $x_2/a_1\in[-7,7]$, so
    the entire slab and the surrounding free space are visible.}
    \label{fig:g3g5_strauszer_imperfect}
\end{figure}

\section{Conclusion}\label{sec:conclusion}
This paper has demonstrated a successful marriage between the high-order
discretization of volume integral equations over unstructured meshes with a
class of fast solvers (here, the sparsifying preconditioner) that exploit
structured meshes. Theoretical guarantees provide sufficient conditions under
which the proposed preconditioner is invertible; in practice, we have never
observed non-invertibility. The results of the preconditioner are in some sense
ideal: we have demonstrated that the iteration counts under coupling can be
made roughly equal to those required for a structured-grid discretization
preconditioned with the fast solver, with costs dominated by the application of
the unpreconditioned forward map. The preconditioned spectrum is tightly
clustered at $\lambda = 1$. Our ongoing work in this area concerns extension to
3D and to other volume integral equation formulations arising in acoustics and
electromagnetics; applications of the methods in electrical engineering and
inverse problems are being pursued.

\section*{Acknowledgments}
The authors are grateful to Leonardo Zepeda-N\'{u}\~{n}ez for the public
availability of a Julia-language implementation of the sparsifying
preconditioner and for its revival for modern Julia, upon which code the solver
of this paper is based. The first author acknowledges support from the NSF
under awards DMS-2514012 and DMS-2231482.  The second and fourth authors
acknowledge support from the Dutch Research Council (NWO) under the project
``Density interpolation methods for the fast and high-order evaluation of volume
potentials in complex geometries'' with file number OCENW.M.22.387 of the
research programme Open Competition Domain Science (M).

\appendix
\section{Invertibility conditions in practice}\label{app:precond_validation}

This appendix presents numerical evidence to support some of the claims and assumptions made in Section~\ref{sec:invertibility} on the invertibility of the preconditioner.

Table~\ref{tab:invertibility_certificate} reports the quantities relevant to the invertibility analysis of Approach~I. That analysis relies on the factor $\|\mathsf{Z}_{\mathcal{I}}\|_\infty$, i.e., the largest absolute row sum of $\mathsf{I}_{N_C}-\mathsf{R}^{(C)}$ over the interior Cartesian rows $\mathcal{I}$~\eqref{eq:interior_cartesian_indices}, decreasing as $r_h\to0$. For Approach~I it does: Table~\ref{tab:invertibility_certificate} shows it decreasing linearly in the mesh ratio $r_h$, confirming the first-order rate established in Lemma~\ref{lem:approachI_estimate}.
For Approach~II, in turn, the same quantity plateaus near $1.11$ as $r_h\to0$, so no amount of refinement brings the round trip $\mathsf{R}^{(C)}_{\mathrm{II}}$ close to the Cartesian identity at the interior Cartesian nodes. Approach~II therefore admits no perturbative (Neumann series) analysis of the kind carried out for Approach~I, which is why its invertibility relies instead on the assumed condition~\eqref{eq:diss_on_range} on $\mathsf{S}^{(C)}$ (Theorem~\ref{thm:P_invertible_unconditional}).
Lemma~\ref{lem:Q_invertibility} also provides the criterion~\eqref{eq:invertibility_condition_unsplit}, namely $\|(\mathsf{S}^{(C)})^{-1}(\mathsf{S}^{(C)}-\mathsf{I}_{N_C})\mathsf{Z}\|_\infty<1$, whose
left-hand side Table~\ref{tab:invertibility_certificate} reports for $\eta=-m=1.25$ (in fact, it is proportional to $|\eta|$ in this case, so the corresponding values at $\eta=2.5$ are simply double those tabulated). As expected, it inherits
the behavior of $\|\mathsf{Z}_{\mathcal{I}}\|_\infty$, decreasing with $r_h$ for Approach~I,
plateauing for Approach~II. Over the range of
mesh ratios reported it falls below one, for Approach~I, at the three smallest values of $r_h$ considered, so this sufficient condition certifies invertibility in those cases; for Approach~II it remains near $6.25$ throughout. As an additional check for invertibility, Table~\ref{tab:invertibility_certificate} reports the smallest singular value of $\mathsf{P}$, whenever the memory budget allowed storing a dense instance of $\mathsf{P}$ and illustrates that $\mathsf{P}$ remains invertible in cases with $r_h\geq 1$ where the sufficient condition is not met---highlighting that the conditions of Lemma~\ref{lem:Q_invertibility} are sufficient but not necessary.

\begin{table}[htbp]
  \centering
  \caption{Preconditioner invertibility check for a constant-contrast disk with $k=10$ at the Cartesian resolution $k\,h_C=1$ and several mesh ratios $r_h$. Writing $\mathsf{Z}:=\mathsf{I}_{N_C}-\mathsf{R}^{(C)}$ and
  $\mathsf{Z}_{\mathcal{I}}$ for its restriction to the interior rows
  $\mathcal{I}$~\eqref{eq:interior_cartesian_indices}, the contrast-independent column
  $\|\mathsf{Z}_{\mathcal{I}}\|_\infty$ is the factor bounded in
  Lemma~\ref{lem:approachI_estimate}, while \emph{criterion} is the left-hand side
  of~\eqref{eq:invertibility_condition_unsplit},
  $\|(\mathsf{S}^{(C)})^{-1}(\mathsf{S}^{(C)}-\mathsf{I}_{N_C})\mathsf{Z}\|_\infty$ for $\eta=1.25$, which must
  fall below one to certify invertibility. Dashes mark the cases in which the memory budget did
  not allow forming a dense $\mathsf{P}$.}
  \label{tab:invertibility_certificate}
   \begin{tabular}{rrrrrrrrr}
    \hline
     & \multicolumn{4}{c}{Approach~I} & \multicolumn{4}{c}{Approach~II} \\
    \cmidrule(lr){2-5}\cmidrule(lr){6-9}
     & & & \multicolumn{2}{c}{$\sigma_{\min}(\mathsf{P})$}
     & & & \multicolumn{2}{c}{$\sigma_{\min}(\mathsf{P})$} \\
    $r_h$ & $\|\mathsf{Z}_{\mathcal{I}}\|_\infty$ & criterion & $\eta{=}1.25$ & $\eta{=}2.5$
          & $\|\mathsf{Z}_{\mathcal{I}}\|_\infty$ & criterion & $\eta{=}1.25$ & $\eta{=}2.5$ \\
    \hline
    $3.00$    & $2.22$   & $9.28$  & $8.1\times10^{-2}$ & $2.0\times10^{-2}$ & $1.55$ & $7.77$ & $1.2\times10^{-1}$ & $4.4\times10^{-2}$ \\
    $2.00$    & $1.38$   & $6.72$  & $8.6\times10^{-2}$ & $3.7\times10^{-2}$ & $1.29$ & $6.62$ & $1.3\times10^{-1}$ & $5.6\times10^{-2}$ \\
    $1.00$    & $0.662$  & $2.92$  & $4.8\times10^{-2}$ & $2.2\times10^{-2}$ & $1.19$ & $6.24$ & $1.3\times10^{-1}$ & $6.1\times10^{-2}$ \\
    $0.50$    & $0.329$  & $1.72$  & --                 & --                 & $1.13$ & $6.25$ & --                 & -- \\
    $0.25$    & $0.169$  & $1.09$  & --                 & --                 & $1.12$ & $6.27$ & --                 & -- \\
    $0.125$   & $0.0856$ & $0.545$ & --                 & --                 & $1.11$ & $6.26$ & --                 & -- \\
    $0.0625$  & $0.0432$ & $0.243$ & --                 & --                 & $1.11$ & $6.25$ & --                 & -- \\
    $0.03125$ & $0.0215$ & $0.112$ & --                 & --                 & $1.11$ & $6.24$ & --                 & -- \\
    \hline
  \end{tabular}
\end{table}

Next we consider the hypothesis of Theorem~\ref{thm:P_invertible_unconditional}, which underlies the invertibility of the Approach~II preconditioner. The quantity of concern is the normalized $\mathsf{D}$-form
\begin{equation}\label{eq:rho_def}
  \rho(\mathbf{w}) :=
  \frac{\langle\mathsf{S}^{(C)}\mathbf{w},\mathbf{w}\rangle_{\mathsf{D}}}
       {\|\mathbf{w}\|^2_{\mathsf{D}}}.
\end{equation}
Hypothesis~\eqref{eq:diss_on_range} asks precisely that $\rho(\mathbf{w})\notin(-\infty,0]$ for every nonzero $\mathbf{w}\in\operatorname{range}\mathsf{R}^{(C)}_{\mathrm{II}}$. We probe this by evaluating~\eqref{eq:rho_def} over a sample set $\mathcal{W}$ of $10^4$ random vectors
$\mathbf{w}=\mathsf{R}^{(C)}_{\mathrm{II}}\mathbf{x}$, with the entries of $\mathbf{x}$ taken from the standard (complex) normal distribution, and reporting the observed ranges
\begin{equation}\label{eq:rho_ranges}
\mathcal R_\rho = \left[\min_{\mathbf w\in \mathcal{W}}\real\rho(\mathbf{w}),\,\max_{\mathbf w\in \mathcal{W}}\real\rho(\mathbf{w})\right]\quad\text{and}\quad \mathcal I_\rho = \left[\min_{\mathbf w\in \mathcal{W}}\imag\rho(\mathbf{w}),\,\max_{\mathbf w\in\mathcal{W}}\imag\rho(\mathbf{w})\right].
\end{equation}

Clearly, this sampling can only exhibit values attained by $\rho$, so it may refute the hypothesis but not establish it. Either of two mechanisms keeps $\rho$ off the non-positive real axis: an imaginary part bounded away from zero, i.e.\ $0\notin\mathcal I_\rho$, which is the imaginary-part condition~\eqref{eq:diss_imag_part} of the energy balance in Section~\ref{sec:dissipativity_physical}; or a positive real part, $\mathcal R_\rho\subset(0,\infty)$. In the experiments reported in Figures~\ref{fig:rho_sampling_const}--\ref{fig:rho_sampling_air}, in all cases with $k=10$ and $kh_C=1$, at least one of the two holds in every configuration considered. Each figure shows $\mathcal R_\rho$ in its top row and $\mathcal I_\rho$ in its bottom row, across three mesh ratios $r_h\in\{0.5,1.0,1.5\}$.

Figure~\ref{fig:rho_sampling_const} considers a constant refractive index $n\equiv n_0$. As predicted by~\eqref{eq:variable_contrast_perturbation}, the sampled imaginary parts carry the sign of the contrast $m_0=1-n_0$: $\mathcal I_\rho$ lies entirely above zero for $n_0<1$ and entirely below it for $n_0>1$, and contracts towards zero as $n_0\to1$, where the contrast vanishes and the scatterer disappears. Near $n_0=1$ the imaginary part therefore does not separate $\rho$ from the set $(-\infty, 0]$ that is problematic for invertibility, yet in this case the real part does, with $\mathcal R_\rho$ clustering around $1$. 

The remaining two configurations consider the non-constant refractive index
\begin{equation}\label{eq:rho_variable_index}
  n(x) = n_0\Bigl(1+\delta\bigl(\exp(-10((x_1-0.5)^2+x_2^2))
                                   -\exp(-10((x_1+0.5)^2+x_2^2))\bigr)\Bigr),
\end{equation}
that perturbs a uniform background (either of glass or air, with $n_0\equiv1.5^{2}$ and $n_0\equiv1.0$ respectively) by two Gaussian bumps of opposite sign and amplitude $\delta\in[0,1]$. For glass (Figure~\ref{fig:rho_sampling_glass}) the background contrast dominates and the sampled imaginary parts stay bounded away from zero, $0\notin\mathcal I_\rho$, uniformly in $\delta$ and in the mesh ratio, as the sufficient condition~\eqref{eq:diss_imag_part} would require. For air (Figure~\ref{fig:rho_sampling_air}) the contrast is due entirely to the bumps and therefore changes sign, and so does the imaginary part of $\rho$: here $0\in\mathcal I_\rho$ and the sufficient condition~\eqref{eq:diss_imag_part} fails. The real part nonetheless remains positive and clustered near $1$, leaving the weaker hypothesis~\eqref{eq:diss_on_range}, the one actually assumed in Theorem~\ref{thm:P_invertible_unconditional}, satisfied on the whole sample. In none of the experiment configurations did $\rho$ approach the problematic set $(-\infty, 0]$, although this is not assured from the energy balance of Section~\ref{sec:dissipativity_physical}.

\begin{figure}[htbp]
    \centering
    \includegraphics[width=0.3\linewidth]{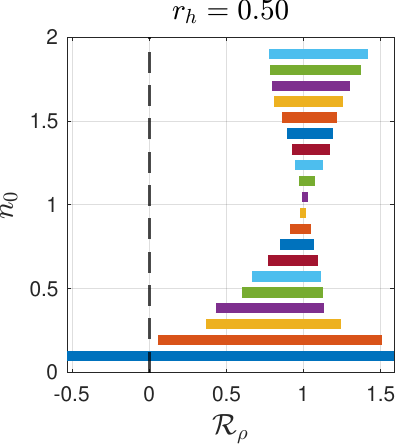}\hfill
    \includegraphics[width=0.3\linewidth]{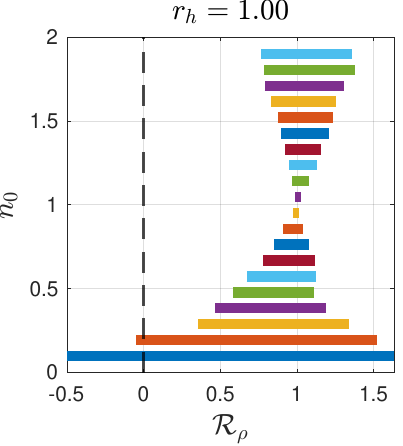}\hfill
    \includegraphics[width=0.3\linewidth]{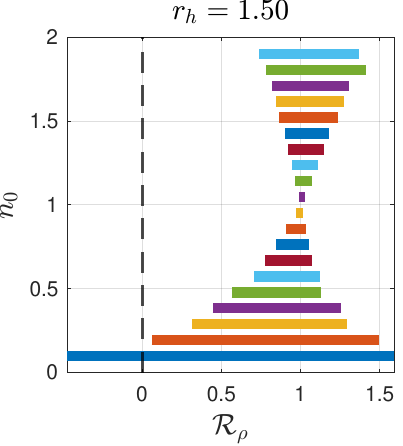}\\[6pt]
    \includegraphics[width=0.3\linewidth]{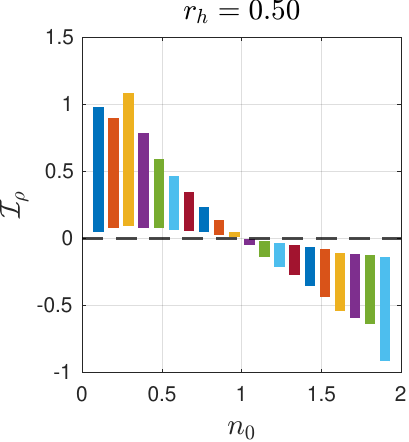}\hfill
    \includegraphics[width=0.3\linewidth]{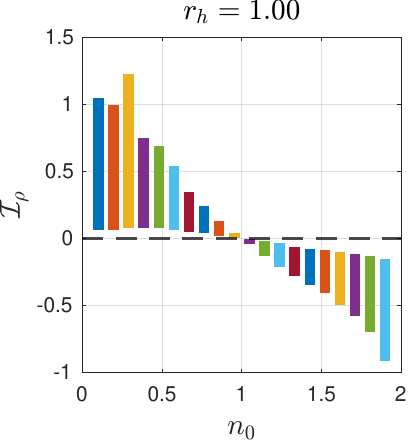}\hfill
    \includegraphics[width=0.3\linewidth]{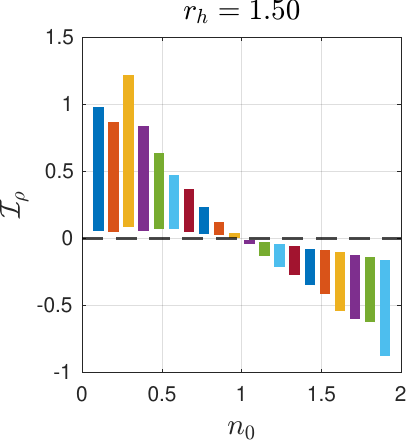}
    \caption{Constant refractive index $n\equiv n_0$: observed ranges~\eqref{eq:rho_ranges} of the normalized form~\eqref{eq:rho_def} over $10^4$ random $\mathbf{w}\in\operatorname{range} \mathsf{R}^{(C)}_{\mathrm{II}}$, as a function of $n_0$, at mesh ratios $r_h\in\{0.5,1.0,1.5\}$ (left to right), in all cases with $k=10$ and $k\,h_C=1$. Top row: real range $\mathcal R_\rho$, which stays positive and clustered near $1$ except at the smallest values of $n_0$. Bottom row: imaginary range $\mathcal I_\rho$, lying entirely above zero for $n_0<1$ and entirely below it for $n_0>1$, and contracting as $n_0\to1$, where the contrast $m_0=1-n_0$ vanishes; the sign of $\imag\rho$ is thus that of $m_0$, as predicted by~\eqref{eq:variable_contrast_perturbation}. Where one range approaches zero the other stays away from it, so $\rho$ avoids the non-positive real axis throughout.}
    \label{fig:rho_sampling_const}
\end{figure}

\begin{figure}[htbp]
    \centering
    \includegraphics[width=0.3\linewidth]{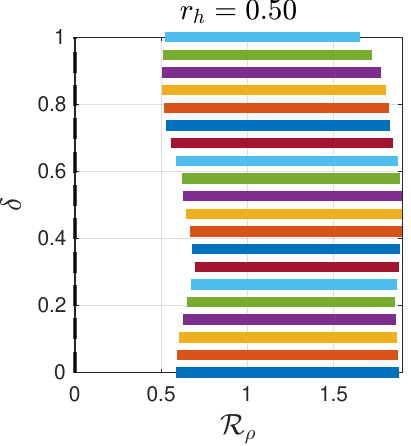}\hfill
    \includegraphics[width=0.3\linewidth]{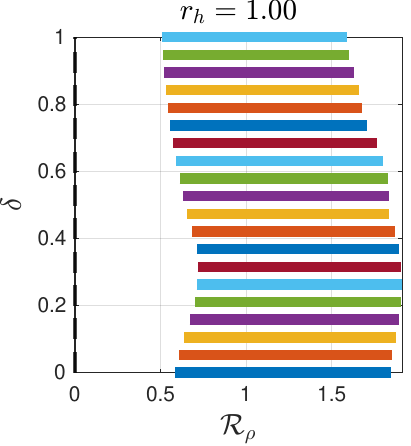}\hfill
    \includegraphics[width=0.3\linewidth]{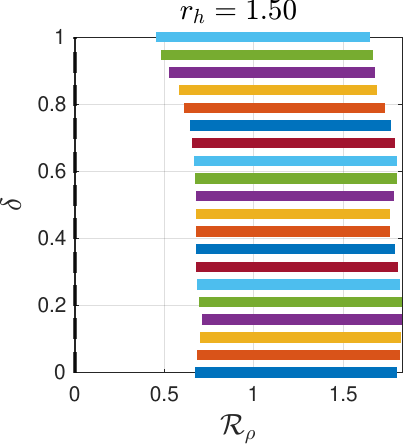}\\[6pt]
    \includegraphics[width=0.3\linewidth]{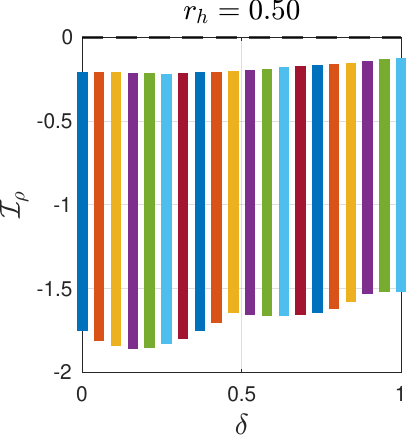}\hfill
    \includegraphics[width=0.3\linewidth]{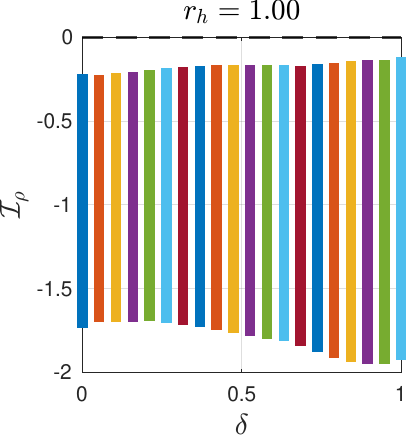}\hfill
    \includegraphics[width=0.3\linewidth]{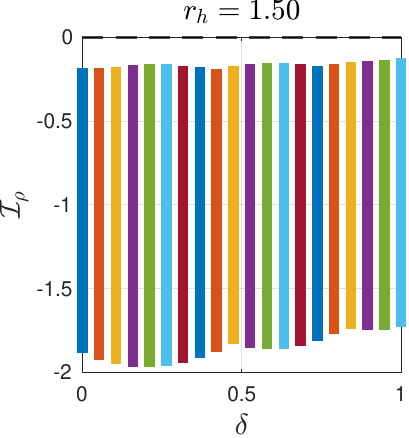}
    \caption{Bump-perturbed glass, i.e.\ the index~\eqref{eq:rho_variable_index} with $n_0=1.5^{2}$: observed ranges~\eqref{eq:rho_ranges} of the normalized form~\eqref{eq:rho_def} over $10^4$ random $\mathbf{w}\in\operatorname{range} \mathsf{R}^{(C)}_{\mathrm{II}}$, as a function of the perturbation amplitude $\delta$, at mesh ratios $r_h\in\{0.5,1.0,1.5\}$ (left to right), in all cases with $k=10$ and $k\,h_C=1$. Top row: real range $\mathcal R_\rho$, positive throughout. Bottom row: imaginary range $\mathcal I_\rho$, which carries the negative sign of the glass background contrast and remains bounded away from zero for every $\delta$ and every mesh ratio, so the sampling is consistent with the imaginary-part condition~\eqref{eq:diss_imag_part}.}
    \label{fig:rho_sampling_glass}
\end{figure}

\begin{figure}[htbp]
    \centering
    \includegraphics[width=0.3\linewidth]{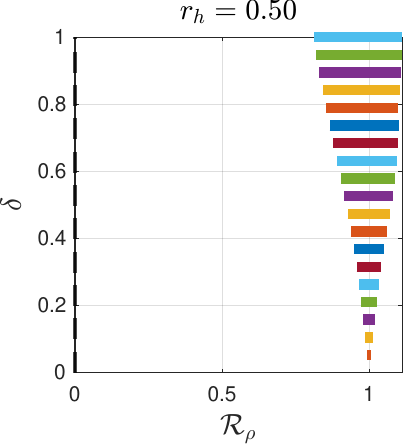}\hfill
    \includegraphics[width=0.3\linewidth]{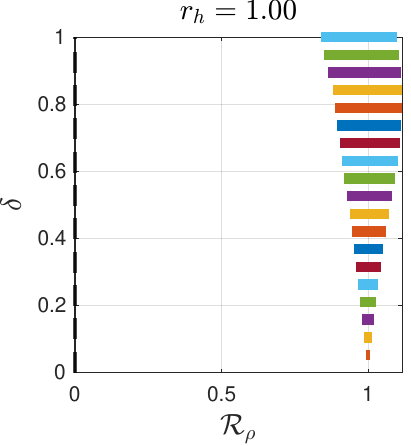}\hfill
    \includegraphics[width=0.3\linewidth]{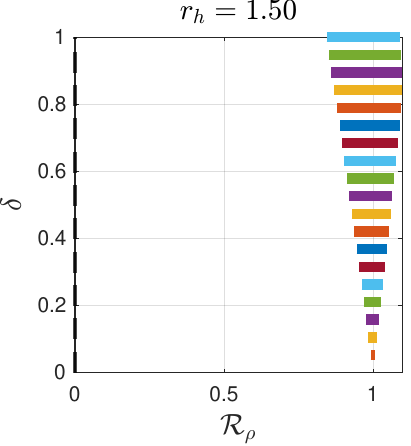}\\[6pt]
    \includegraphics[width=0.3\linewidth]{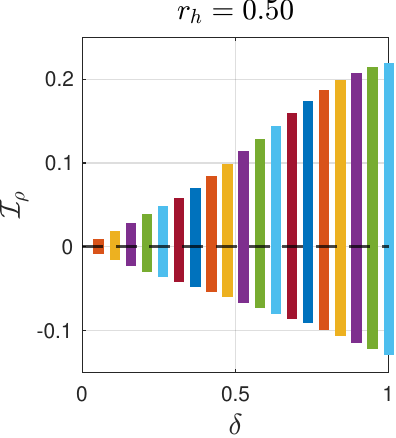}\hfill
    \includegraphics[width=0.3\linewidth]{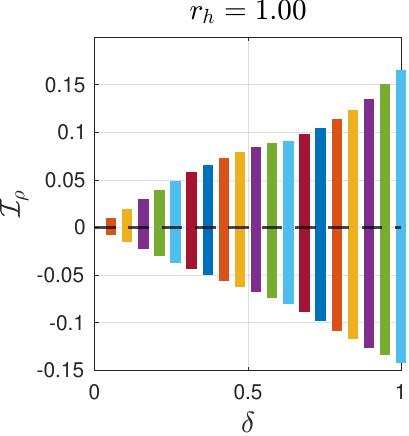}\hfill
    \includegraphics[width=0.3\linewidth]{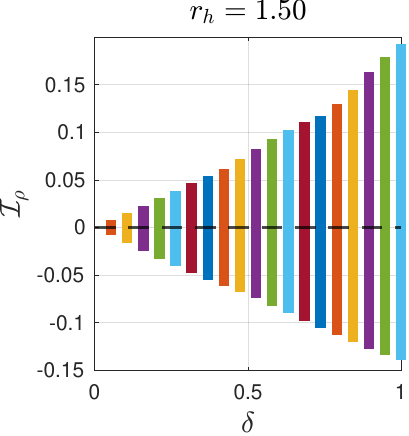}
    \caption{Bump-perturbed air, i.e.\ the index~\eqref{eq:rho_variable_index} with $n_0=1$: observed ranges~\eqref{eq:rho_ranges} of the normalized form~\eqref{eq:rho_def} over $10^4$ random $\mathbf{w}\in\operatorname{range} \mathsf{R}^{(C)}_{\mathrm{II}}$, as a function of the perturbation amplitude $\delta$, at mesh ratios $r_h\in\{0.5,1.0,1.5\}$ (left to right), in all cases with $k=10$ and $k\,h_C=1$. Here the background contrast vanishes and the bumps contribute contrast of both signs, so the imaginary range $\mathcal I_\rho$ (bottom row) contains zero and widens with $\delta$, and the imaginary-part condition~\eqref{eq:diss_imag_part} fails. The real range $\mathcal R_\rho$ (top row) nonetheless stays positive and clustered near $1$, so $\rho$ still avoids the non-positive real axis and the hypothesis~\eqref{eq:diss_on_range} of Theorem~\ref{thm:P_invertible_unconditional} continues to hold.}
    \label{fig:rho_sampling_air}
\end{figure}

\newpage
\bibliographystyle{abbrv}
\bibliography{References}
\end{document}